\newcommand{\quash}[1]{}
\newtheorem{defin}{Definition}
\newtheorem{prop}{Proposition}
\newtheorem{nt}{Remark}
\newtheorem{Th}{Theorem}
\newtheorem{lemma}{Lemma}
\newtheorem{cons}{Corollary}
\newtheorem{defin-prop}{Definition-proposition}
\newfont{\ssdbl}{msbm8}
\newfont{\sdbl}{msbm9}
\newfont{\dbl}{msbm10 at 12pt}
\newcommand{\oo}{{\cal O}}
\newcommand{\ff}{{\cal F}}
\newcommand{\res}{\mathop {\rm res}}
\newcommand{\Hom}{\mathop {\rm Hom}}
\newcommand{\tr}{\mathop {\rm tr}}
\newcommand{\Spec}{\mathop {\rm Spec}}
\newcommand{\Id}{\mathop {\rm Id}}
\newcommand{\dz}{\mathbb{Z}}
\newcommand{\Z}{\dz}
\newcommand{\vsgm}{\varsigma}
\newcommand{\Ker}{{\rm Ker}\:}
\newcommand{\Image}{{\rm Im}\:}
\newcommand{\lrto}{\longrightarrow}
\def\Z{{\mathbb Z}}
\def\Q{{\mathbb Q}}
\newcommand{\vp}{\varphi}
\newcommand{\LL}{{\mathcal L}}
\newcommand{\AutL}{{   {\mathcal Aut}^{\rm c, alg} ({\mathcal L} )}}
\newcommand{\AutLm}{{   {\mathcal Aut}_{-}^{\rm c, alg} ({\mathcal L} )}}
\newcommand{\AutLp}{{   {\mathcal Aut}_{+}^{\rm c, alg} ({\mathcal L} )}}
\newcommand{\Lie}{\mathop {\rm Lie}}
\newcommand{\GG} {{\mathbb G}}
\newcommand{\CC}{\mathop{\rm CC}}
\newcommand{\GL}{{\mathop{\rm GL}}}
\newcommand{\ve}{\varepsilon}
\newcommand{\vpl}{{\mathbb V}_+}
\newcommand{\vmi}{{\mathbb V}_-}
\newcommand{\Nil}{{\mathop {\rm Nil}}}
\newcommand{\DistTo}{\xrightarrow{
   \,\smash{\raisebox{-0.65ex}{\ensuremath{\scriptstyle\sim}}}\,}}
\newcommand{\G}{{\mathcal G}}
\newcommand{\Der}{\mathop{ \rm Der}\nolimits^{\rm c}_A}
\newcommand{\qq}{{\mathcal Q}}
\newcommand{\mm}{{\mathcal M}}
\newcommand{\id}{{\rm id}}
\newcommand{\II}{{\mathcal I}}
\begin{document}

\author{
D. V. Osipov
}

\title{Local analog of the Deligne-Riemann-Roch isomorphism for line bundles in relative dimension $1$
\thanks{This work is supported by the Russian Science Foundation under grant no.~23-11-00033, https://rscf.ru/project/23-11-00033/}
}
\date{}

\maketitle

\begin{abstract}
We prove a local analog of the Deligne-Riemann-Roch isomorphism in the case of line bundles and  relative dimension $1$.
This local analog consists in computation of the class of $12$th power of the  determinant central extension of a group ind-scheme $\mathcal G$ by the multiplicative group scheme over $\Q$ via the product of $2$-cocyles in the second cohomology group.
These $2$-cocycles  are the compositions of the Contou-Carr\`{e}re symbol  with the $\cup$-product of $1$-cocycles.
  The group ind-scheme $\G$ represents the functor which assigns to every commutative ring $A$ the group that  is the semidirect product of the group
  $A((t))^*$ of invertible elements of $A((t))$  and the group of continuous $A$-automorphisms of  $A$-algebra $A((t))$.
  The determinant central extension naturally  acts on the determinant line bundle on the moduli stack
of geometric data (proper quintets). A proper quintet is a collection of a proper family of  curves over $\Spec A$, a line bundle on this family, a section of this family, a relative formal parameter at the section, a formal trivialization of the bundle at the section that satisfy  further conditions.
\end{abstract}

\section{Introduction}

The goal of this paper is to prove a local analog of the Deligne-Riemann-Roch theorem for
linear bundles in relative dimension $1$.
 The parts for this local analog of the Deligne-Riemann-Roch theorem consist of  the central extensions of the  group ind-scheme
 that is the semidirect product of the group
 of invertible functions on the  formal punctured disc  and the group of automorphisms of this disc. These central extensions are by the multiplicative group scheme $\GG_m$.
 We prove an equivalence of these central extensions after extensions of scalars  of all schemes and ind-schemes from the ring $\dz$ to
 the field
 $\Q$.

\subsection{Deligne-Riemann-Roch theorem}  \label{Deligne}
We recall the Deligne-Riemann-Roch isomorphism (or, in other words, the Deligne-Riemann-Roch theorem).

Let $\pi : X \to S$ be a  smooth proper morphism of relative dimension~$1$ of schemes, i.e. $\pi$ gives a  family of smooth proper curves over a scheme $S$.

For any two invertible sheaves $L$ and $M$ on $X$ Deligne constructed (see~\cite[\S~6]{D1}, \cite[Expos\'e~XVIII, \S~1.3]{SGA4}) an invertible sheaf  $\langle L, M \rangle$ on $X$. This Deligne pairing (or, in other words, Deligne bracket) is symmetric and bilinear with respect to the tensor products of invertible sheaves. It is also functorial in $L$ and $M$ (with respect to isomorphisms of sheaves) and compatible with base change.

When  $S$ is a smooth algebraic variety over a field, then   inside the group ${\rm Pic}(S)$ the Deligne bracket for invertible sheaves $L$ and $M$ on $X$ is
\begin{equation}  \label{DBr1}
c_1(\langle L, M \rangle) = \pi_* (c_1(L)  \cdot c_1(M))  \, \mbox{,}
\end{equation}
where $\cdot$ is the product in the Chow ring  ${\rm CH}^*(X)$, and $\pi_* : {\rm CH}^2(X)  \to {\rm CH}^1(S) ={\rm Pic}(S) $  is the direct image between Chow groups.

Let $\omega = \Omega^1_{X/S}$ be an invertible sheaf of relative differential $1$-forms on $X$, and $L$ be any invertible sheaf on $X$.
When geometric fibres of $\pi$ are connected,  Deligne constructed a  Riemann-Roch isomorphism of line bundles (see~\cite[Th\'eor\`eme~9.9]{D1}):
\begin{equation}\label{DRR1}
(\det R\pi_* L)^{\otimes 12} \simeq \langle L, L  \rangle^{\otimes 6}  \otimes \langle L, \omega  \rangle^{\otimes -6} \otimes \langle  \omega, \omega \rangle  \mbox{.}
\end{equation}
Isomorphism~\eqref{DRR1} is functorial in $L$ and compatible with base change.

When  $S$ is a smooth algebraic variety over a field, then in  ${\rm Pic}(S) \otimes_{\dz} \Q$, using~\eqref{DBr1},  isomorphism~\eqref{DRR1} becomes an equality
\begin{multline} \label{GRR}
c_1(\det R\pi_* L) = \pi_* \left( \frac{1}{2} c_1(L)  \cdot c_1(L) - \frac{1}{2} c_1(L) \cdot c_1(\omega)  + \frac{1}{12} c_1(\omega) \cdot c_1(\omega) \right) = \\  =\pi_* \left( \left( {\rm ch}(L) \cdot {\rm td(\omega^{\otimes -1})}     \right)_2  \right) \, \mbox{,}
\end{multline}
where $${\rm ch}(L) = 1 + c_1(L) + \frac{1}{2} c_1^2(L) + \ldots$$
is the Chern character,
$${\rm td} (\omega^{\otimes -1}) = 1 - \frac{1}{2} c_1(\omega) + \frac{1}{12} c_1^2(\omega) + \ldots$$ is the Todd class, and $(\hphantom{m})_2$ means the component of degree $2$ in the ring ${\rm CH}^*(X)  \otimes_{\dz} \Q$, i.e. an element from the group ${\rm CH}^2(X) \otimes_{\dz} \Q$.

Thus we see that equality~\eqref{GRR} is the corollary of the Grothendieck-Riemann-Roch theorem.

\subsection{Invertible functions and automorphisms of the punctured formal disc}

We consider the  functor  $L \GG_m $ from the category of commutative rings to the category of Abelan groups
$$
A \longmapsto A((t))^*  \, \mbox{,}
$$
where $A((t))^*$ is the group of invertible elements in the $A$-algebra of Laurent series $A((t))= A[[t]][t^{-1}]$.
This functor is represented by a group ind-scheme, see~Section~\ref{ex-ind-sch}.

The group ind-scheme $L \GG_m $ can be considered as the group of invertible functions on the formal punctured disc.

On the $A$-algebra $A((t))$ there is the natural   $t$-adic topology  that makes $A((t))$ into the topological $A$-algebra, where $A$ has discrete topology.

We consider the functor $\AutL$  from the category of commutative rings to the category of  groups that assigns to every commutative ring $A$ the group of all continuous $A$-automorphisms of the $A$-algebra $A((t))$.   This is indeed a functor, and this functor is represented by a group ind-scheme, see~Sections~\ref{sec-aut} and~\ref{ex-ind-sch}.

The group ind-scheme $\AutL $ can be considered as the group of automorphisms of  the formal punctured disc.

For any commutative ring $A$ there is the natural action of the group $\AutL(A)$ on the Abelian group $A((t))^*$.
Therefore it makes sense the group ind-scheme
$$
\G = L \GG_m \rtimes \AutL  \, \mbox{.}
$$

This group ind-scheme has the following geometric meaning. There is a natural action of $\G$ on the moduli stack of quintets $\mathcal M$ and on the moduli stack of proper quintets ${\mathcal M}_{\rm pr}$ (see Theorem~\ref{action}), and we explain now the notion of a quintet and of a proper quintet.

By a quintet over a commutative ring $A$ we mean a collection that consists of a separated  family $C$ of  curves over $A$, a section of this family such that this family is smooth near this section, a sheaf of $\oo_C$-modules that is an invertible sheaf near the section, a relative formal parameter at the section, a formal trivialization of the sheaf at the section (in other words, the last two conditions mean that the topological $A$-algebra of  functions on the formal neighbourhood of $C$ at the section   is isomorphic to the topological $A$-algebra  $A[[t]]$ and the sheaf obtained as the restriction of the sheaf of $\oo_C$-modules to this formal neighbourhood is trivial, and  we fix an element $t$ and a formal trivialization of the sheaf), see Section~\ref{quint}.

A proper quintet is a quintet such that, in addition to the previous conditions, the morphism $C \to \Spec A$ is a
a flat proper finitely presented morphism such that all geometric fibers
are integral one-dimensional schemes, and the sheaf  is an invertible sheaf of $\oo_C$-modules, see also Section~\ref{quint}.

Informally speaking, elements of $\G(A)$ reglue the family of curves  and the sheaf in a quintet  along the section (cf.~\cite[\S~17.3, \S~18.1.3]{FB}, \cite[\S~4.1]{Pol}, but we provide detailed proofs of the construction, see Theorem~\ref{action}).
The action of $\G$ on a proper quintet with a smooth curve $C$ over a field $k$ is transitive, i.e. the map  from the corresponding Lie algebra to the tangent space is surjective.

We note that in complex analytic category some analog
of functor of proper quintets modulo  isomorphisms  (when all fibers of $C$ from a quintet are smooth curves of genus~$g$) is
represented by an infinite-dimensional complex manifold, see~\cite{ADKP}.
Such quintets are related with the infinite-dimensional Sato Grassmanian and the Krichever map that maps quintets over fields to the points of the  Sato Grassmanian,
see, e.g.~\cite{ADKP, SW}. Through the Krichever map there is also further relationships with the moduli spaces of curves, soliton equations etc.

\subsection{Local analog of the left hand side of  the Deligne-Riemann-Roch isomorphism: determinant central extension}

There is a natural  central extension, which we call the determinant central extension, of group ind-schemes
$$
1 \lrto \GG_m \lrto \widetilde{\G}  \stackrel{\eta}{\lrto} \G \lrto 1  \, \mbox{,}
$$
where the morphism $\eta$ admits a section (as ind-schemes, not as group ind-schemes).

The construction of the determinant central extension is as follows (see also Section~\ref{main-centr}).

Let $A$ be any commutative ring. The group $\G(A)$ naturally acts on $A((t))$. Let $(h, \vp) \in L \GG_m(A) \rtimes \AutL(A) $ and $f \in A((t))$.
Then $(h, \vp) (f)= h \vp(f)$.

The group $\widetilde{\G}(A)$ consists of pairs $(g,s)$, where $g \in \G(A)$ and an element $s$
belongs to  the free $A$-module  $\det(g (A[[t]])  \mid  A[[t]])$ of rank $1$
and also generates  this $A$-module.
Here $\det(g(A[[t]])  \mid  A[[t]])$  is the relative determinant of $A$-modules $g(A[[t]])$ and $A[[t]]$, which is  canonically isomorphic to the $A$-module\footnote{Here and further in the article all the wedge products are over $A$. For short, we will omit this indication.}
$$
\Hom\nolimits_A \left( \bigwedge^{\rm max} (g(A[[t]])/ t^l A[[t]]), \,  \bigwedge^{\rm max} (A[[t]]/ t^l A[[t]])                   \right)      \, \mbox{,}
$$
where an integer $l$ satisfies the condition  $t^l A[[t]]   \subset (g(A[[t]]) \cap A[[t]])$.
Now $\eta((g,s)) =g$.

We note that this construction  originates (when $A=k$ is a field) from~\cite{KP}, where the group $\G(A)$ is changed to the group of continuous automorphisms of topological $k$-vector space $k((t))$.

A (non-group) section of morphism $\eta$ can be chosen in a  natural way, see Remark~\ref{can-sec}.

On the moduli stack of proper quintets ${\mathcal M}_{\rm pr}$ there is a natural determinant linear bundle, see Section~\ref{det-lin-bund}.
This means that for any proper quintet with a family of curves $\pi : C \to \Spec A$ and an invertible sheaf of $\oo_C$-modules  $\ff$
there is a linear bundle on $\Spec A$ that satisfy further compatibilty conditions. This linear bundle on $\Spec A$ is exactly an invertible sheaf  $\det R \pi_* \ff$
from Section~\ref{Deligne}.

In Proposition~\ref{cohom} we construct an explicit  complex of finitely generated projective $A$-modules, which consists of two terms and whose determinant gives $\det R \pi_* \ff$. Using this complex,  we construct a natural action of the group ind-scheme $\widetilde{\G}$ on the determinant line bundle on the moduli stack of proper quintets ${\mathcal M}_{\rm pr}$ that lifts an action of the group ind-scheme $\G$
on  ${\mathcal M}_{\rm pr}$, see Theorem~\ref{act-det-st}.

We can consider cohomology groups of group ind-schemes with coefficients in commutative group ind-schemes. More generally,  instead of ind-schemes we consider  functors from the category of commutative rings, see Section~\ref{cohom-sec}.  Then we write an analog of the bar complex (or, in other words, an analog of the standard complex) from the group cohomology,
where we will use the Abelian groups of morphisms of functors,
 see complex~\eqref{cochain-functor1}. Thus for any integer $q \ge 0$ we obtain the notions of $q$-cocycles, $q$-coboundaries, $q$th cohomology groups of a group functor with coefficients in a commutative group functor, where functors are from the category of commutative rings.

We outline also another approach to compute these cohomology groups. This approach is by resolution by induced modules (certain commutattive group functors) that are acyclic, see~Section~\ref{induced}.

Similar to group cohomology, we have the notion of $\cup$-products of cocycles. Besides, any central extension of a group functor by a commutative group functor that admits a  section (as functors) defines a $2$-cocycle. The equivalence classes of such extensions are in one-to-one correspondence with elements of the second cohomology group.

For any group functor $G$ that acts on a commutative group functor $F$ and any commutative ring $A$ there is a natural homomorphism of Abelian groups
$$
H^q(G, F)  \lrto H^q(G(A), F(A))  \, \mbox{.}
$$

We will denote by $D$ the $2$-cocycle on $\G$ with coefficients in $\GG_m$ given by the determinant central extension and its natural section.

The determinant central extension (and the $2$-cocycle $D$) give the local analog of the left hand side of the Deligne-Riemann-Roch isomorphism.

\subsection{Local analog of the right hand side  of the Deligne-Riemann-Roch isomorphism: $\cup$-products of $1$-cocycles}

There is the Contou-Carr\`{e}re symbol $\CC$
$$
\CC \; : \; L \GG_m  \times L \GG_m   \lrto L \GG_m \otimes  L \GG_m  \lrto \GG_m  \, \mbox{.}
$$
that is a bimultiplicative and antisymmetric  morphism
(see Section~\ref{CC-symb} on the definition of the Contou-Carr\`{e}re symbol $\CC$, some its properties and the references).
The Contou-Carr\`{e}re symbol $\CC$ is invariant under the diagonal action of the group functor $\AutL$.

Through the natural morphism of group ind-schemes  $\G \to \AutL  $ we obtain that the group ind-scheme $\G$ naturally acts on the commutative group ind-scheme $L \GG_m$.

Let $\lambda_1$ and $\lambda_2$ be any $1$-cocycles on $\G$ with coefficients in $L \GG_m$. We construct the $2$-cocycle
$\langle \lambda_1 , \lambda_2   \rangle$ on $\G$ with coefficients in  $\GG_m$ (where $\G$ acts trivially on $\GG_m$)
$$
\langle \lambda_1 , \lambda_2   \rangle  = \CC \circ (\lambda_1 \cup \lambda_2)  \, \mbox{,}
$$
where $\circ$ means the composition of the morphism $\lambda_1  \cup  \lambda_2$ from $\G \times \G$  to $L \GG_m \otimes L \GG_m  $ and
 the $\G$-equivariant morphism  $\CC$ from $ L \GG_m \otimes  L \GG_m $ to  $\GG_m $.

 There are distinct $1$-cocycles $\Lambda$ and $\Omega$ on $\G$ with coefficients in $L \GG_m$ (see also Section~\ref{prod-coc}):
$$
\Lambda((h, \vp)) = h    \qquad \mbox{and}  \qquad \Omega( (h, \vp) )  = d \vp(t) / dt = \vp(t)'  \, \mbox{,}
$$
where $(h, \vp)  \in \G(A) = L \GG_m (A)  \rtimes \AutL (A)$.

The $1$-cocycle $\Lambda$ is the universal $1$-cocycle (see Remark~\ref{univers})  that will replace $L$ in the right hand side of formula~\eqref{DRR1}, and the $1$-cocycle $\Omega$ will replace $\omega$ in the right hand side of formula~\eqref{DRR1}.

Now we obtain distinct $2$-cocycles on $\G$ with coefficients in $\GG_m$ (where $\G$ acts trivially on $\GG_m$):
$$
\langle \Lambda , \Lambda   \rangle   \, \mbox{,} \qquad \quad \langle \Lambda , \Omega   \rangle
 \, \mbox{,} \qquad \quad  \langle \Omega , \Omega   \rangle  \, \mbox{.}
$$
Here the first $2$-cocycle is a local analog of $\langle L , L \rangle $ from formula~\eqref{DRR1},
the second $2$-cocycle is a local analog of $\langle L , \omega \rangle $ from~\eqref{DRR1},
and the third $2$-cocycle is a local analog of  $\langle \omega , \omega \rangle $ from~\eqref{DRR1}. Besides, the Contou-Carr\`{e}re symbol $\CC$ is an analog of the direct image $\pi_*$, see formula~\eqref{DBr1}.

\subsection{Equality in the second cohomology group}

We obtain in Theorem~\ref{LDRR} the local analog of the Deligne-Riemann-Roch isomorphism~\eqref{DRR1}
$$
D^{12} =  \langle \Lambda, \Lambda \rangle^6 \cdot    \langle \Lambda, \Omega \rangle^{-6} \cdot \langle \Omega, \Omega \rangle \, \mbox{,}
$$
where this equality is in the group $H^2 (\G_{\Q}, {\GG_m}_{\Q})$,  and ${\G}_{\Q}$ acts trivially on   ${\GG_m}_{\Q}$. Besides we extend scalars from $\dz$ to $\Q$ for all the schemes and ind-schemes. We use also  the multiplicative notation for the group law in the Abelian
group $H^2 (\G_{\Q}, {\GG_m}_{\Q})$.

We note that we stated
Theorem~\ref{LDRR}
without proof in the short note~\cite{O2}.

The proof of the above equality in $H^2 (\G_{\Q}, {\GG_m}_{\Q})$ is based on the  statement from Corollary~\ref{Cor2}. This statement  generalizes Theorem~5.1  from~\cite{O1} from the case of the group ind-scheme ${\AutL}_{\Q}$ to the group ind-scheme ${\G}_{\Q}$. Besides,   for the case of the group of orientation-preserving   diffeomorphisms of the circle in the theory of infinite-dimensional Lie groups see Corollary (7.5) from~\cite{Se}.

The statement of Corollary~\ref{Cor2} is that an element from the group $H^2 (\G_{\Q}, {\GG_m}_{\Q})$,
where ${\GG_m}_{\Q}$
is a trivial ${\G}_{\Q}$-module, is uniquely defined by its image in $H^2(\Lie \II \G^0_{\Q}(\Q), \Q )$ together with its restriction
to $H^2(  {\G_+}_{\Q}, {\GG_m}_{\Q})$.

Here $\G_+$ is the group ind-scheme that represents the functor that assigns to every commutative ring $A$ the group which is the semidirect product of the group $A[[t]]^*$ of invertible elements of the $A$-algebra $A[[t]]$ and the group of continuous $A$-automorphisms
of the $A$-algebra $A[[t]]$. We have that $\G_+ (A)  \subset \G(A)$.

The Lie $\Q$-algebra $\Lie \II \G^0_{\Q}(\Q)$ is the $\Q$-subalgebra of Lie $\Q$-algebra  $\Lie \G(\Q)$ of the group ind-scheme $\G_{\Q}$. The algebra $\Lie \G (\Q)$  is the algebra of continuous differential operators of order $\le 1$ acting on $\Q((t))$. The subalgebra $\Lie \II \G^0_{\Q}(\Q)$  is
$$
\Q[t,t^{-1}]  \rtimes  \mathop{{\rm Der}}\nolimits_{\Q} (\Q[t, t^{-1}])  \, \mbox{,}
$$
where $ \mathop{{\rm Der}}_{\Q} (\Q[t, t^{-1}])  =  \Q[t, t^{-1}] \frac{\partial}{\partial t}  $ is the Lie $\Q$-algebra of $\Q$-derivations.

Now to obtain the local analog of the Deligne-Riemann-Roch isomorphism we note that the $2$-cocycles in the left and right hand sides are trivial
after restriction to ${\G_+}_{\Q}$. And for the comparison of the corresponding Lie algebra $2$-cocyles it is enough to check the corresponding equalities for  elements of type $t^n$ and $t^m  \frac{\partial}{\partial t}$  from $\Lie \II \G^0_{\Q}(\Q)$.

We note that this check is reduced to the finite sum of constant elements in case of $\langle \Lambda, \Lambda \rangle$, to the sum of finite arithmetic progression in case of $\langle \Lambda, \Omega \rangle$, and to the sum of quadratic function over a finite set of consecutive non-negative integers  in case of $\langle \Omega, \Omega \rangle$,
see the proof of Theorem~\ref{main-Lie}.

\subsection{Organization of the paper}

The paper is organized as follows.

In Section~\ref{Sec-Loop-Aut} we introduce and give some properties for the group functors $L \GG_m$, $\AutL$, $\G$.

In Section~\ref{coh-gr-fun} we introduce and prove properties for cohomology of group functors.

In Section~\ref{Lie-sect} we investigate the Lie algebra valued functor $\Lie \G$, in particularly for any commutative ring $A$ the Lie $A$-algebra $\Lie \G (A)$, constructed from the ind-scheme~$\G$.

In Section~\ref{main-centr}  we give various constructions of the determinant central extension of $\G$ and prove its properties.

In Section~\ref{geom-act} we define quintets and proper quintets and describe the action of $\G$ on the moduli stack of quintets $\mathcal M$ and on the moduli stack of proper quintets ${\mathcal M}_{\rm pr}$, and describe also the action of $\widetilde{\G}$ on the determinant line bundle on ${\mathcal M}_{\rm pr}$.

In Section~\ref{2-cocycles}  we define and investigate $2$-cocycles $\langle \Lambda, \Lambda \rangle $,
$\langle \Lambda, \Omega \rangle $ and $\langle \Omega, \Omega \rangle $ on $\G$ with coefficients in $\GG_m$.

In Section~\ref{compar} we prove a local analog of the Deligne-Riemann-Roch isomorphism by the comparison of two central extensions of ${\G}_{\Q}$ by ${\GG_m}_{\Q}$.

\section{Groups related with the ring of Laurent series} \label{Sec-Loop-Aut}

Let $A$ be any commutative ring. Let $A((t))= A[[t]][t^{-1}]$ be the ring of Laurent series over $A$. We will denote $\LL (A) = A((t))$.

By a group functor (or commutative group functor) we mean a covariant functor from the category of commutative rings to the category of groups (or Abelian groups).

\subsection{Loop functor of multiplicative group}   \label{recall-loop}

As usual, let $\GG_m$ be a commutative group functor such that $\GG_m(A)= A^*$ for any commutative ring $A$.

By $L \GG_m$ we denote the group loop functor of $\GG_m$, i.e.
$$L \GG_m(A) = \GG_m(A((t)))= A((t))^* \mbox{.} $$

There is a canonical  isomorphism of commutative group functors (see~\cite[Lemma~1.3]{CC1}, \cite[Lemma~0.8]{CC2})
\begin{equation}  \label{md}
\underline{\dz} \times \GG_m \times \vpl  \times \vmi \, \simeq \,  L \GG_m \mbox{,}
\end{equation}
where the group functors in the left hand side and the corresponding embeddings are described as follows.

Let $A$ be any commutative ring.

The group  $\underline{\dz}(A)$ is the group of locally constant $\dz$-valued functions on $\Spec A$. Any $\underline{n} \in \underline{\dz}(A)$ gives the decomposition $A = A_1 \times \ldots \times A_l$ such that
the function $\underline{n}$ restricted to any $\Spec A_i$ is the constant function with value $n_i \in \dz$.
Then $\underline{n}  \mapsto t^{\underline{n}} = t^{n_1} \times \ldots \times t^{n_l}$ defines embedding $\underline{\dz} \to L \GG_m$
of the corresponding group functors in~\eqref{md}.

The group embedding  $\GG_m(A)  \hookrightarrow L \GG_m(A)$ is given by mapping of an invertible element from $A$ to the series consisting of only a constant term.

The subgroups $\vpl(A)$ and $\vmi(A)$ of the group $L \GG_m(A) $  are defined as
\begin{gather}  \label{eq-vpl}
\vpl(A) = \left\{ \left. 1 + \sum_{l > 0} a_l t^l  \quad \right|  \quad   a_l \in A     \quad  \mbox{for any} \quad l > 0         \right\}   \\
\label{eq-min}
\vmi(A) = \left\{   1 + \sum_{l < 0} a_l t^l \quad \left|  \quad  \sum_{l < 0} a_l t^l  \in A((t)) \, \mbox{,} \; a_l \in \Nil(A)    \right.        \quad \mbox{for any} \quad l< 0           \right\} \, \mbox{,}
\end{gather}
where $\Nil(A)$ is the nil-radical of $A$, i.e. the set of all nilpotent elements of $A$.

We denote
\begin{equation}  \label{declg}
(L \GG_m)^0 =   \GG_m \times \vpl  \times \vmi  \, \mbox{.}
\end{equation}
Thus we obtain decomposition of group functors
\begin{equation}  \label{dec}
\underline{\dz} \times (L \GG_m)^0 \, \simeq \,  L \GG_m  \, \mbox{.}
\end{equation}
By $\nu : L \GG_m  \to \underline{\dz}$ we denote the  morphism of group functors given by the corresponding  projection in decomposition~\eqref{dec}.

\subsection{Automorphism group and semidirect product}  \label{sec-aut}

On the ring $A((t))$, where $A$ is any commutative ring,  there is the natural topology with  the base of neighbourhoods of zero consisting of $A$-submodules
$U_n = t^n A[[t]]$, $n \in \dz$. This topology makes the ring $A((t))$ into a topological ring.

By $\AutL (A)$ we denote the group of all $A$-automorphisms of the $A$-algebra $A((t))$ that are homeomorphisms.

We have the following facts on elements of this group (see~\cite[\S~2.1]{O1}  and references therein). Any continuous $A$-automorphism of the $A$-algebra $A((t))$ is a homeomorphism. There is the following isomorphism of sets:
\begin{equation}  \label{isom}
\AutL (A) \,  \simeq  \,   \left\{  \left. h \in L \GG_m(A)   \,  \right|  \,  \nu(h) =1   \right\} \, \mbox{,}
\end{equation}
where the map from the left side to the right side is $\vp  \mapsto \widetilde{\vp}= \vp(t)$, and the map from the right side to the left side is
$$
\widetilde{\vp} \longmapsto  \left\{ f \mapsto  f \circ \widetilde{\vp} \right\} \, \mbox{,}
$$
where  $f \in A((t))$, and  $f \circ \widetilde{\vp}$ denotes the series from $A((t))$ obtained by substitution of the series $\widetilde{\vp}$ into the series $f$
instead of variable $t$.

For any $\vp_1$ and $\vp_2$ from $\AutL (A)$ we have
$$
\widetilde{\vp_1 \vp_2} = \widetilde{\vp_2}  \circ \widetilde{\vp_1}  \, \mbox{.}
$$

Isomorphism~\eqref{isom} defines the structure of the  functor on the correspondence
$$A \longmapsto \AutL (A)  \, \mbox{,}$$
where $A$ is any commutative ring.
This is the group functor which we denote by $\AutL$.

We will use the following unique decomposition of the functor  $\AutL$ (see~\cite[\S~3.1]{O1}):
\begin{equation}  \label{decaut}
\AutL = \AutLp  \cdot \AutLm =  \AutLm  \cdot \AutLp   \, \mbox{,}
\end{equation}
where for any commutative ring $A$ the subgroups $\AutLp(A)$ and $\AutLm(A)$ of the group $\AutL(A)$ are described via isomorphism~\eqref{isom} as the set of elements
\begin{equation}  \label{apl}
\left\{ \left. \sum_{l \ge 0} a_l t^l  \quad \right| \quad   a_l \in A \quad \mbox{for any} \quad l \ge 0 \,  \mbox{,}    \quad  a_0 \in \Nil(A) \, \mbox{,}  \quad a_1 \in A^*   \right\}
\end{equation}
and
\begin{equation} \label{ami}
\left\{  \left.
\sum_{n \le l \le -1} a_l t^l  + t  \quad \right| \quad n < 0 \quad \mbox{is any} \, \mbox{,} \quad  a_l \in \Nil(A) \quad \mbox{for any } \quad l < 0
\right\}
\end{equation}
correspondingly.
We note that the group $\AutLp(A)$ consists of all continuous $A$-automorphisms of the $A$-algebra $A[[t]]$.

 Besides, decomposition~\eqref{decaut} is the direct product decomposition as functors, but it is not the direct product or semi-direct product decomposition as group functors (the corresponding subgroups are not normal for some rings $A$).

\bigskip

We have the natural action of the group functor $\AutL$ on the commutative group functor $L \GG_m$. This leads to the following group functor that will play one of the main roles in this article
\begin{equation}  \label{G-funct}
{\mathcal G} = L \GG_m \rtimes \AutL  \mbox{,}
\end{equation}
where for any commutative ring $A$, for any elements $h_1, h_2 \in L \GG_m(A)$ and any elements $\vp_1, \vp_2 \in \AutL(A)$ we have
$$
(h_1, \vp_1)(h_2, \vp_2)= (h_1 \vp_1(h_2), \vp_1 \vp_2)  \, \mbox{.}
$$

Besides the group functor ${\mathcal G}$ acts on the commutative group functor $\LL$ in the natural way:
$(h,\vp) (f) = h \vp(f)$, where $h \in L\GG_m(A)$, $\vp \in \AutL(A)$, $f \in \LL(A)=A((t))$ for any commutative ring $A$.

We note that the group ${\mathcal G}(A)$ acts by $A$-module homeomorphisms on $A((t))$ for any commutative ring $A$.

\bigskip

We note that for any commutative ring $A$, any $\vp  \in  \AutL(A)$  and any ${h \in L \GG_m(A)}$  we have $\nu (\vp(h)) = \nu(h)$.

Therefore the group functor  $\AutL$ preserves the group subfunctor \linebreak ${(L \GG_m)^0  \hookrightarrow L \GG_m}$ under the natural action. Hence  the group subfunctor $\G^0  \hookrightarrow \G$ defined by
\begin{equation}  \label{decg0}
{\mathcal G}^0 = (L \GG_m)^0 \rtimes \AutL  
\end{equation}
is well-defined.

\begin{lemma}  \label{dec-G}
We have $\G = \G^0  \rtimes \underline{\dz} $, where $\underline{\dz} \hookrightarrow L \GG_m \hookrightarrow \G$ as in Section~\ref{recall-loop}.
\end{lemma}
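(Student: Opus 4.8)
The plan is to realise the claimed decomposition as a \emph{split} short exact sequence of group functors
\begin{equation*}
1 \lrto \G^0 \lrto \G \stackrel{\bar\nu}{\lrto} \underline{\dz} \lrto 1 \, \mbox{,}
\end{equation*}
and then to invoke the standard equivalence between split extensions and semidirect product decompositions. First I would introduce the morphism $\bar\nu : \G \to \underline{\dz}$ defined by $(h,\vp) \mapsto \nu(h)$ for $(h,\vp) \in \G(A) = L\GG_m(A) \rtimes \AutL(A)$, together with the candidate splitting $s : \underline{\dz} \to \G$ given by $\underline{n} \mapsto (t^{\underline{n}}, \id)$, where $\underline{n} \mapsto t^{\underline{n}}$ is the embedding $\underline{\dz} \hookrightarrow L\GG_m$ from Section~\ref{recall-loop}.

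The first step is to check that $\bar\nu$ is a morphism of group functors. Using the multiplication law of $\G$ and the fact that $\nu$ is a homomorphism out of the abelian group functor $L\GG_m$, one computes
\begin{equation*}
\bar\nu\big((h_1,\vp_1)(h_2,\vp_2)\big) = \nu\big(h_1\,\vp_1(h_2)\big) = \nu(h_1) + \nu(\vp_1(h_2)) = \nu(h_1)+\nu(h_2) \, \mbox{,}
\end{equation*}
where the last equality is exactly the invariance $\nu(\vp(h)) = \nu(h)$ recorded just before the statement; this is the one nontrivial input, and the only place where the structure of $\AutL$ enters. Since by decomposition~\eqref{dec} the kernel of $\nu$ is $(L\GG_m)^0$, the kernel of $\bar\nu$ consists of the pairs $(h,\vp)$ with $h \in (L\GG_m)^0(A)$, i.e. $\Ker \bar\nu = \G^0$; in particular $\G^0$ is normal in $\G$.

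It remains to verify that $s$ is a group homomorphism splitting $\bar\nu$. Homomorphy follows from $(t^{\underline{n}},\id)(t^{\underline{m}},\id) = (t^{\underline{n}+\underline{m}},\id)$, and $\bar\nu(s(\underline{n})) = \nu(t^{\underline{n}}) = \underline{n}$ shows that $s$ is a section. Then for any $(h,\vp) \in \G(A)$ the element $(h,\vp)\,s(\bar\nu(h,\vp))^{-1}$ lies in $\Ker\bar\nu = \G^0(A)$, so $\G = \G^0 \cdot s(\underline{\dz})$, while $\G^0 \cap s(\underline{\dz}) = 1$ because $s(\underline{n}) \in \G^0$ forces $\underline{n} = \nu(t^{\underline{n}}) = 0$. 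These three properties yield the semidirect product decomposition $\G = \G^0 \rtimes \underline{\dz}$ for every commutative ring $A$, and since all maps involved are morphisms of functors the decomposition is natural in $A$. I do not expect a genuine obstacle here: the entire argument is formal once the invariance $\nu(\vp(h)) = \nu(h)$ is available, which is precisely what the text supplies.
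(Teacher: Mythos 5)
Your proof is correct. It does, however, proceed differently from the paper's own argument. The paper proves the lemma by a direct conjugation computation: for $h \in (L\GG_m)^0(A)$ and $\vp \in \AutL(A)$ it checks inside $\G(A)$ that
$$
(t,1)(h,\vp)(t,1)^{-1} = (t\,h,\vp)(t^{-1},1) = (t\,h\,\vp(t)^{-1},\vp) \, \mbox{,}
$$
which lies in $\G^0(A)$ because $\nu(t\,h\,\vp(t)^{-1}) = 0$; the product decomposition $\G = \G^0\cdot\underline{\dz}$ and the trivial intersection are left implicit as consequences of decomposition~\eqref{dec}. You instead package the same one-line input --- the invariance $\nu(\vp(h)) = \nu(h)$ recorded just before the lemma --- into the statement that $\bar\nu : (h,\vp) \mapsto \nu(h)$ is a morphism of group functors, so that $\G^0 = \Ker \bar\nu$ is normal for free, and then split the resulting exact sequence by $s(\underline{n}) = (t^{\underline{n}},\id)$. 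Your route buys uniformity: normality holds for conjugation by arbitrary elements of $\G(A)$ at once, with no need to reduce to the generator $(t,1)$, to its inverse, or to treat separately locally constant functions $\underline{n}$ taking different values on the factors of $\Spec A$ --- points the paper's terse computation glosses over. The paper's route buys explicitness: the conjugation formula exhibits concretely how $\underline{\dz}$ acts on $\G^0$ in the semidirect product, and it is exactly this action (of $1 \in \underline{\dz}(\dz)$ on $\G^0$, and its lift to central extensions) that is exploited later, for instance in the proof of Theorem~\ref{th1}. Both arguments are complete and rest on the same key fact.
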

\begin{proof}
 For any commutative ring $A$,  any $h \in (L \GG_m)^0(A)$ and any ${\vp \in  \AutL(A)}$ we have inside the group $\G(A)$ the following equality
$$
 (t,1) (h, \vp) (t, 1)^{-1} =  (t \, h, \vp)(t^{-1}, 1)  = (t \, h  \, \vp(t)^{-1}, \vp)
\, \mbox{,}
$$
and $(t \, h \, \vp(t)^{-1} , \vp ) \in \G^0(A)$, since $\nu(t \, h \, \vp(t)^{-1})=0$.
\end{proof}

\subsection{Ind-schemes}  \label{ex-ind-sch}

By an ind-affine ind-scheme we mean an ind-object ${M= \mbox{``$\varinjlim\limits_{i \in I}$''} \Spec C_i}$  of the category of affine schemes such that all transition maps in the ind-object are closed embeddings of schemes. We say that ind-affine ind-scheme is ind-flat if any $C_i$ is a flat $\dz$-module.

By $\oo(M)= \mbox{$\varprojlim\limits_{i \in I} $} C_i$ we will denote the  ring of regular functions of $M$.

We note that all the  functors described in Sections~\ref{recall-loop}-\ref{sec-aut} are represented by ind-flat ind-affine ind-schemes.

Indeed (see formulas~\eqref{eq-vpl}-\eqref{eq-min}, \eqref{apl}-\eqref{ami}),
\begin{gather}  \label{ind-formb}
\GG_m = \Spec \dz[x,x^{-1}] \, \mbox{,}  \qquad \quad \underline{\dz} = \mbox{``$\varinjlim\limits_{n \in \mathbb{N}}$''} \coprod_{-n \le l \le n}
(\Spec \dz)_l \, \mbox{,} \\
\vpl  = \Spec \dz[a_1, a_2, \ldots]  \, \mbox{,}  \qquad  \quad \vmi = \mbox{``$\varinjlim\limits_{ \{   \epsilon_i \} }$''}  \Spec \dz[a_{-1}, a_{-2}, \ldots]/ I_{\{ \epsilon_i \}}  \, \mbox{,} \\
\AutLp =  \mbox{``$\varinjlim\limits_{n \in \mathbb{N}}$''}  \Spec \dz[a_0]/ (a_0^n)  \times  \Spec \dz[a_1, a_1^{-1}]  \times \Spec \dz[a_2, a_3, \ldots ]  \, \mbox{,}  \\
\label{ind-formc}
\AutLm =  \mbox{``$\varinjlim\limits_{ \{   \epsilon_i \} }$''}  \Spec \dz[a_{-1}, a_{-2}, \ldots]/ I_{\{ \epsilon_i \}}  \, \mbox{,}
\end{gather}
where $(\Spec \dz)_l = \Spec \dz $ and the limits in $\vmi$ and $\AutLm$  are taken over all the sequences $\{\epsilon_i\}$ with integers $i < 0$ and $\epsilon_i$ are nonnegative integers such that all but finitely many $\epsilon_i$ equal zero, the ideal $I_{\{\epsilon_i\}}$ is generated by elements $a_i^{\epsilon_i +1}$ for all integers $i < 0$.

The other functors $L \GG_m$, $(L \GG_m)^0$, $\AutL$, $\G$, $\G^0$ are represented by products of some above ind-schemes.

\section{Cohomology of group functors}  \label{coh-gr-fun}
\subsection{General definitions and properties}  \label{cohom-sec}
We fix a commutative ring $R$. We say that a functor is over $R$ if this functor is from the category of commutative $R$-algebras.
All functors (correspondingly, group or commutative group functors) in this section will be over $R$. We will omit this indication on~$R$.

For any functor $G$ and any commutative $R$-algebra $R'$ we denote by $F_{R'}$ the restriction of the functor $F$ to the category of commutative $R'$-algebras.

For any  functor $G$ and any commutative group functor $F$ we denote by $\Hom(G, F)$ the Abelian group of all morphisms from the functor $G$ to the functor $F$. In this definition we do not take into account any additional structure (if it exists) on the functor $G$.

For any integer $n \ge 1$ and any functor $G$ we denote by $G^{\times n}$ the functor which is the $n$-fold direct product of the functor $G$.

A group functor $G$  acts on a commutative group functor $F$  if for any commutative $R$-algebra $A$ the group $G(A)$ acts on the Abelian group $F(A)$ such that this action is compatible with any $R$-algebra homomorphism of commutative $R$-algebras. We will also say that $F$ is a {\em $G$-module}.

\medskip

Now we recall the definition and the main properties (see~\cite[\S~2.3.1]{O1}) for cohomology of group functors.

We will fix a group functor $G$ until the end of this section.

Let  $F$ be a $G$-module. Then there is the cochain complex of Abelian groups (this complex is similar to the complex obtained from the bar or standard resolution in the  group cohomology)

\begin{equation}  \label{cochain-functor1}
C^0 (G,F)  \stackrel{\delta_0}{\lrto} C^{1}(G, F) \stackrel{\delta_1}{\lrto} \ldots \xrightarrow{\delta_{k-1}} C^{k}(G,F) \stackrel{\delta_k}{\lrto} \ldots  \, \mbox{,}
\end{equation}
where $C^0(G, F)= F(R)$, $C^k(G, F)= \Hom (G^{\times k}, F)$ when $k \ge 1$, and
\begin{multline*}
\mathop{\delta_{q}c} \, (g_1, \ldots, g_{q+1}) = g_1  \mathop{c}  (g_2, \ldots, g_{q+1}) \, \cdot \,  \prod_{i=1}^q  \mathop{c}  (g_1, \ldots, g_i g_{i+1}, \ldots, g_{q+1})^{(-1)^i} \,  \cdot\\
\cdot \,   \mathop{c}  (g_1, \ldots, g_q)^{(-1)^{q+1}}  \, \mbox{,}
\end{multline*}
where integers $q \ge 0$, $c \in C^q(G,F)$, $g_j \in G(A)$ with $1 \le j \le q+1$ for any commutative $R$-algebra $A$. Besides, we use the multiplicative notation for the group laws in  Abelian groups $F(A)$ and $C^{q}(G,F)$.

Let $q$ be  a non-negative  integer.  A {\em $q$-cocycle} on $G$ with coefficients in $F$ is an element of the subgroup $\Ker \delta_q \subset  C^{q}(G,F)$. For $q \ge 1$,
a {\em $q$-coboundary} on $G$ with coefficients in $F$ is an element of the subgroup $\Image \delta_{q-1} \subset  C^{q}(G,F)$.
The  {\em  $q$-th cohomology group} of $G$ with coefficients in $F$ is the Abelian group
$$H^q(G,F)= \Ker \delta_q / \Image \delta_{q-1}  \, \mbox{.}$$

To give a  $q$-cocycle on $G$ with coefficients in $F$ is the same as to give a collection of $q$-cocycles $\{ c_A \}$ on  $G(A)$
with coefficients in $F(A)$ for any commutative $R$-algebra $A$ with obvious compatibility condition for $c_{A_1}$ and $c_{A_2}$ for any $R$-algebra homomorphism $A_1 \to A_2$. Besides, for any fixed commutative $R$-algebra $A$ there is a natural morphism from complex~\eqref{cochain-functor1} to the corresponding complex that calculates the cohomology of $G(A)$ with coefficients in $F(A)$. This gives the natural homomorphism
$$
H^q(G, F)  \lrto H^q(G(A), F(A))  \, \mbox{.}
$$

Any morphism  of $G$-modules $F_1 \to F_2$ induces the homomorphism of Abelian groups  $H^q(G,F_1) \to H^q(G, F_2)$.

We can speak about a short exact sequence (or an extension) of group functors, this is equivalent that this sequence  becomes  a short exact sequence of groups after taking the values of  group functors on any commutative $R$-algebra $A$.

By~\cite[Prop.~2.2]{O1}, the elements of the group $H^2(G,F)$ are in one-to-one correspondence with equivalence classes
 of  extensions of the group functor $G$ by the commutative  group functor $F$
\begin{equation}  \label{ext1}
1 \lrto F \lrto \widetilde{G} \stackrel{\vartheta}{\lrto} G \lrto 1
\end{equation}
such that the morphism $\vartheta$ has a section $G \to \widetilde{G}$ as a morphism of functors (in general,   not as a morphism of group functors), and
the action of $G$ on $F$ comes from innner automorphisms in the group functor $\widetilde{G}$. Besides, as usual, extension~\eqref{ext1} is central if and only if $G$ acts trivially on $F$.

As in the group cohomology (see~\cite[ch.~V, \S~3]{Bro}), for any $1$-cocycles $\lambda_1$ and $\lambda_2$ on $G$ with coefficients in $G$-modules $F_1$ and $F_2$ correspondingly, we obtain a $2$-cocycle $\lambda_1 \cup \lambda_2$ on $G$ with coefficients in $F_1 \otimes F_2$ such that
$$
(\lambda_1 \cup  \lambda_2) (g_1, g_2) = \lambda_1 (g_1)  \otimes g_1 ( \lambda_2(g_2) ) \, \mbox{,}
$$
where for any commutative $R$-algebra $A$ we have $(F_1 \otimes F_2)(A) = F_1(A) \otimes_{\dz} F_2(A)$, and $g_1, g_2 $ are any elements from $G(A)$.
This induces the well-defined homomorphism, which is called the $\cup$-product:
$$
\cup \; :  \; H^1(G, F_1)  \otimes_{\Z} H^1(G, F_2) \lrto H^2(G, F_1 \otimes F_2)   \, \mbox{.}
$$

\subsection{Resolution by induced modules}   \label{induced}

We will explain that it is also  possible to calculate $H^q(G,F)$ via the resolution of $F$ by induced $G$-modules, which are acyclic.

For any $G$-module  $F$ we denote the group
$F^{G}= H^0(G, F) $
which consists of elements $f \in F(R)$ such that for any commutative $R$-algebra $A$ and any element $g \in G(A)$ we have $g(f)=f$.
Now $F \mapsto F^{G}$ is a left exact functor from the Abelian  category of $G$-modules to the category of Abelian groups.

Let $L$ be any commutative group functor. By $\underline{\Hom}(G, L)$ we denote a $G$-module defined as
$$
\underline{\Hom}(G, L)(A) = \Hom(G_A, L_A)  \, \mbox{,}
$$
where $A$ is any commutative $R$-algebra, and the $G$-module structure is the following:
for any $g \in G(A)$, $h \in \underline{\Hom}(G, L)(A)$, $x \in G(A')$, where $A'$ is any commutative $A$-algebra, we define
$g(h)(x)= h(x g)$.

There is an embedding of $G$-modules
\begin{equation}  \label{emb}
\alpha \; : \; \xymatrix{ L \; \, \ar@{^{(}->}[r] & \, \underline{\Hom}(G, L)}
\end{equation}
given for any commutative $R$-algebra $A$ as $L(A) \ni m \mapsto \alpha(m) \in \underline{\Hom}(G, L)(A)$,
where $\alpha(m)(x)= x m$ for any $x \in G(A')$ and any commutative $A$-algebra $A'$.

Besides, there is a morphism of group functors (in general, not a morphism of $G$-modules)
\begin{equation}  \label{spl}
\beta \; : \; \underline{\Hom}(G, L)   \lrto L  \qquad \mbox{such that} \qquad \beta \alpha = \Id  \, \mbox{,}
\end{equation}
where for any commutative $R$-algebra $A$, any $h \in \underline{\Hom}(G, L)(A)$ we have $\beta(h)= h(e)$, where $e$ is the identity element of the group $G(A)$.

We will call also the $G$-module $\underline{\Hom}(G,L)$ as an {\em induced $G$-module}.

\begin{prop}
We fix a group functor $G$.
\begin{enumerate}
\item Any short exact sequence
$$
 \xymatrix{
 F_1 \; \ar@{^{(}->}[r] & F_2 \;  \ar@{->>}[r]^{\gamma} & F_3
}
$$
of $G$-modules such that $\gamma$ admits a section as functors (not necessarily as group functors)  induces the long exact sequence of Abelian groups
$$
\ldots \lrto H^q(G, F_1) \lrto H^q(G, F_2)  \lrto H^q(G, F_3) \lrto H^{q+1}(G, F_1) \lrto \ldots \, \mbox{.}
$$
\item For any commutative group functor $L$ and any integer $q > 0$ we have $$H^q(G, \underline{\Hom}(G, L)  ) =0 \mbox{.}$$
\item For any $G$-module $F$,   taking via~\eqref{emb} the resolution of $F$ by induced modules, then  applying to this resolution the functor $K \mapsto K^{G}$ (where $K$ is a $G$-module), and then taking the cohomology groups of this complex, we obtain the groups $H^*(G,F)$.
\end{enumerate}
 \end{prop}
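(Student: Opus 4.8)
The plan is to run, in the category of group functors, the classical homological algebra that computes group cohomology through coinduced modules. I would treat the three parts in order, Part~3 being a formal consequence of Parts~1 and~2.

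For Part~1, I would check that the hypothesis on $\gamma$ turns the cochain groups into a short exact sequence of complexes
$$0 \lrto C^\bullet(G, F_1) \lrto C^\bullet(G, F_2) \lrto C^\bullet(G, F_3) \lrto 0 \, \mbox{,}$$
after which the long exact sequence is the standard one attached to a short exact sequence of complexes. The vertical maps are postcomposition with $F_1 \hookrightarrow F_2$ and with $\gamma$; they commute with the differentials $\delta_q$ because all arrows are morphisms of $G$-modules and $\delta_q$ is built from the $G$-action. Injectivity on the left and exactness in the middle hold because $F_1(A) \to F_2(A) \to F_3(A)$ is exact for every $A$ and $F_1 \to F_2$ is a monomorphism: a cochain $G^{\times q} \to F_2$ killed by $\gamma$ lands pointwise in $\Image (F_1 \to F_2)$ and therefore factors uniquely, and functorially, through $F_1$. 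The only nonformal point is surjectivity of $C^q(G, F_2) \to C^q(G, F_3)$, and here the functorial section $\sigma$ of $\gamma$ does the job: for a cochain $c \colon G^{\times q} \to F_3$ the composite $\sigma \circ c$ is again a morphism of functors and lifts $c$. This is exactly where the hypothesis ``$\gamma$ splits as functors, not necessarily as group functors'' is used, and it is all that is needed.

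Part~2 is the technical core, and I would prove it by transporting the cochain complex through an exponential isomorphism and then contracting it. First, cartesian closedness of the functor category gives, naturally in $q$, an isomorphism of Abelian groups
$$C^q(G, \underline{\Hom}(G, L)) \;\cong\; \Hom(G^{\times (q+1)}, L) \, \mbox{,} \qquad \Phi(c)(g_0, g_1, \ldots, g_q) = [c(g_1, \ldots, g_q)](g_0) \, \mbox{,}$$
the inverse being read off by base change along $A \to A'$ in the definition of $\underline{\Hom}$ and checked bijective by Yoneda. Using the module structure $g(h)(x)=h(xg)$, a direct computation shows that $\Phi$ carries $\delta_q$ to the bar-type operator
$$(Dd)(g_0, \ldots, g_{q+1}) = \sum_{i=0}^{q}(-1)^i d(g_0, \ldots, g_i g_{i+1}, \ldots, g_{q+1}) + (-1)^{q+1} d(g_0, \ldots, g_q) \, \mbox{.}$$
Then the operator $h$ inserting the identity in the first slot, $(hd)(g_1, \ldots, g_{n-1}) = d(e, g_1, \ldots, g_{n-1})$, satisfies $hD + Dh = \Id$ on $\Hom(G^{\times n}, L)$ for $n \ge 1$: after reindexing $i \mapsto i-1$ the two merge-sums cancel, the boundary terms cancel by $(-1)^n + (-1)^{n-1} = 0$, and only the identity term survives. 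Hence the transported complex is acyclic in positive degrees, which is $H^q(G, \underline{\Hom}(G, L)) = 0$ for $q > 0$.

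For Part~3 I would iterate the embedding $\alpha$ of~\eqref{emb}: embedding $F$, then its cokernel $F^{(1)}$, and so on, yields a resolution $F \to \underline{\Hom}(G, F) \to \underline{\Hom}(G, F^{(1)}) \to \cdots$. Since $\beta$ of~\eqref{spl} retracts $\alpha$ as commutative group functors, each short exact sequence $0 \to F^{(k)} \to \underline{\Hom}(G, F^{(k)}) \to F^{(k+1)} \to 0$ splits as functors, so Part~1 applies to it, while Part~2 makes every term acyclic for $H^{>0}(G, -)$. This is precisely the setup of the acyclic-resolution theorem: dimension shifting along the split short exact sequences gives $H^q(G, F) \cong H^{q-1}(G, F^{(1)}) \cong \cdots$ for $q \ge 1$, and comparing the connecting maps with the complex obtained by applying the left exact functor $K \mapsto K^{G} = H^0(G, K)$ identifies $H^*(G, F)$ with the cohomology of $(\underline{\Hom}(G, F^{(\bullet)}))^{G}$. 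The main obstacle is the heart of Part~2, namely pinning down the exponential isomorphism $\Phi$ with the correct base-change bookkeeping and verifying that it intertwines $\delta_q$ with $D$; once that identification is in place the homotopy $h$ finishes the argument, and Parts~1 and~3 are routine provided one is careful to use the functorial (rather than $G$-equivariant) splittings whenever the long exact sequence is invoked.
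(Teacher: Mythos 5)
Your proposal is correct and takes essentially the same route as the paper: Part~1 via the short exact sequence of cochain complexes furnished by the functorial section, Part~3 by the standard splitting/dimension-shifting argument using $\beta$, and Part~2 by a contracting homotopy. In fact your homotopy (inserting the identity in the first slot of the transported complex) corresponds under your exponential isomorphism $\Phi$ exactly to the paper's homotopy $(s_q h)(g_1, \ldots, g_{q-1})(g) = h(g, g_1, \ldots, g_{q-1})(e)$, so the adjunction step is only a change of coordinates in which the same verification becomes the standard bar-complex identity.
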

\begin{proof}
{\em 1.} The proof  follows from a long  exact sequence of cohomology groups induced by a short exact sequence of cochain complexes~\eqref{cochain-functor1} for $F_1$, $F_2$, $F_3$  that is obtained from short exact sequences of Abelian groups
$$
1 \lrto C^k(G, F_1)  \lrto C^k(G, F_2) \lrto C^k(G, F_3) \lrto 1  \, \mbox{,}
$$
which follow because of a section $F_3 \to F_2$ as funtors.

{\em 2.} For complex~\eqref{cochain-functor1}
with $F =  \underline{\Hom}(G, L)$ we construct a contracting homotopy for any $q \ge 1$ (cf.~\cite[Expose I, Lemme~5.2.2.]{SGA3})
$$
s_q \; : \;  C^{q}(G,\underline{\Hom}(G, L))  \lrto C^{q-1}(G,\underline{\Hom}(G, L))
$$
by the rule:
$$
(s_q h) (g_1, g_2, \ldots, g_{q-1})(g)= h(g, g_1, \ldots, g_{q-1})(e)  \, \mbox{,}
$$
where $h \in C^q(G, \underline{\Hom}(G, L))$, $g_1, \ldots, g_{q-1} \in G(A)$ for a commutative $R$-algebra $A$, $g \in G(A')$ for any commutative $R$-algebra $A'$, $e \in G(A')$ is  is the identity element of the group $G(A')$.
The necessary equalities $\delta_{q-1} s_q + s_{q+1} \delta_q = \Id$ follow by direct calculation.

{\em 3.} This is a standard statement which follows from two previous items and splitting of the resolution into short exact sequences such that each of them has splitting as functors in virtue of~\eqref{spl}.
\end{proof}

\section{Lie algebra valued functors from group functors}

\label{Lie-sect}

We suppose that a group functor $G$ over a commutative ring $R$ is represented by an ind-affine ind-scheme over $R$. Then (see, e.g.,~\cite[Appendix~A]{O1}) we have the tangent space functor $\Lie G $ at the unit of the group $G(R)$:
$$
\Lie G(A) = \Ker \left( G(A[\ve]/ (\ve^2)) \lrto G(A) \right)  \, \mbox{,}
$$
where $A$ is any commutative $R$-algebra.

The functor $\Lie G$ is a commutative group functor over $R$ which has a natural structure of ${\mathbb A}^1_R$-module, where ${\mathbb A}^1_R$ is a ring functor over $R$ such that  ${\mathbb A}^1_R(A) = A$.
And there is a bracket
$$
  [ \cdot ,  \cdot ] \, : \, \Lie G  \times \Lie G  \lrto \Lie G
$$
which defines on $\Lie G (A)$ the structure of the Lie $A$-algebra  for any commutative $R$-algebra $A$ (i.~e. $\Lie G$ is a Lie algebra valued
functor over $R$).

For the  definition of $[\cdot, \cdot]$,  we take  elements $d_i \in \Lie G (A)  \subset  G (A[\ve_i]/ (\ve_i^2)) $, where $i=1$ and $i=2$.
 Then we obtain that $d_1, d_2 \in G(A[\ve_1, \ve_2] / (\ve_1^2, \ve_2^2))$ via the natural embedding
 $G (A[\ve_i]/ (\ve_i^2)) \hookrightarrow G(A[\ve_1, \ve_2] / (\ve_1^2, \ve_2^2))$, where $i=1$ and $i=2$. Now we have that
 \begin{equation}  \label{bra}
[d_1, d_2] = d_1 d_2 d_1^{-1}  d_2^{-1}  \in  G (A[\ve_1, \ve_2] / (\ve_1^2, \ve_2^2))
\end{equation}
is the image of the element from $\Lie G (A)$  under the embedding induced by the embedding of rings
$A [\ve_1 \ve_2] / (\ve_1^2 \ve_2^2)   \hookrightarrow A [\ve_1, \ve_2]/ (\ve_1^2, \ve_2^2)$, where we consider
$$
\Lie G(A) = \Ker \left( G(A[\ve_1 \ve_2]/ (\ve_1^2 \ve_2^2)) \lrto G(A) \right)  \, \mbox{.}
$$

The correspondence $G \mapsto \Lie G$ is a functor from the category of group functors over $R$ represented by group ind-affine ind-schemes over $R$ to the category of Lie algebra valued
functors over $R$.

\medskip

We consider the group functor $\G^0$ (see~\eqref{decg0}), which is represented by an ind-affine ind-scheme.

\begin{prop}  \label{prop-iso-Lie}
Let $A$ be any commutative ring. Then we have a natural isomorphism of Lie $A$-algebras
\begin{equation}  \label{iso-Lie}
\Lie \G^0(A) \simeq  A((t)) \rtimes \Der (A((t))) \, \mbox{,}
\end{equation}
where $A((t))$ is an Abelian Lie $A$-algebra and  $\Der (A((t)))$ is the Lie $A$-algebra of continuous $A$-derivations on the commutative $A$-algebra $A((t))$ with the commutator bracket.
This means that  $\Lie \G^0(A)$ is  $A((t))  + A((t)) \frac{\partial}{\partial t}$ as an $A$-module with the bracket
\begin{equation}  \label{Lie-br}
\left[ s_1 + r_1   \frac{\partial}{\partial t}, s_2 + r_2 \frac{\partial}{\partial t} \right]  = \left(r_1 s_2' - r_2 s_1' \right) +  \left(r_1 r_2' - r_2 r_1' \right)\frac{\partial}{\partial t}  \, \mbox{,}
\end{equation}
where $s_i $ and $r_i$ are from $A((t))$, and $r_i'$ or $s_i'$ means the derivative with respect to $t$ of the corresponding element.

In other words,  the Lie $A$-algebra $\Lie \G^0 (A)$ is naturally isomorphic to the Lie algebra of continuous differential operators of order $\le 1$ acting on functions
on the punctured disc over $A$:
\begin{equation}  \label{diff-operat}
\left(s + r   \frac{\partial}{\partial t} \right) \left(f \right) = sf + rf' \, \mbox{,} \qquad \mbox{where} \quad s,r,f \in A((t))  \, \mbox{.}
\end{equation}
\end{prop}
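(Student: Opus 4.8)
The plan is to realize $\Lie\G^0(A)$ faithfully as the algebra of continuous differential operators of order $\le 1$ on $A((t))$, using the tautological action of $\G^0$ on $\LL$, and then to read off both the underlying $A$-module and the bracket from this realization. This simultaneously yields the semidirect-product description~\eqref{iso-Lie}, the explicit formula~\eqref{Lie-br}, and the differential-operator interpretation~\eqref{diff-operat}.

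First I would identify the underlying $A$-module. Since $\G^0 = (L\GG_m)^0 \rtimes \AutL$ and the two factors decompose as in~\eqref{declg} and~\eqref{decaut}, and since the tangent space functor carries finite products of functors to direct sums of $A$-modules, it suffices to linearize each elementary factor over the dual numbers $A[\ve]/(\ve^2)$. Writing a point of $(L\GG_m)^0$ that reduces to $1$ modulo $\ve$ as $1 + \ve s$, the factors $\GG_m$, $\vpl$, $\vmi$ contribute respectively the constant term, the series in $tA[[t]]$, and the finite tails in $t^{-1}A[t^{-1}]$; here the nilpotency conditions of~\eqref{eq-min} become vacuous because $\ve A \subset \Nil(A[\ve]/(\ve^2))$, so these assemble to an arbitrary $s \in A((t))$. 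Likewise, writing an automorphism $\vp$ reducing to the identity via~\eqref{isom} as $\widetilde{\vp} = t + \ve r$, the factors $\AutLp$ and $\AutLm$ from~\eqref{apl}--\eqref{ami} contribute $r \in A[[t]]$ and $r \in t^{-1}A[t^{-1}]$ (again the nilpotency of $a_0$ and the unit condition on $a_1$ hold automatically over $A[\ve]/(\ve^2)$), so that $r$ ranges over all of $A((t))$. Thus $\Lie\G^0(A) \cong A((t)) \oplus A((t))\frac{\partial}{\partial t}$ as an $A$-module.

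Next I would differentiate the action $(h,\vp)(f) = h\,\vp(f)$ of $\G^0$ on $\LL = A((t))$. For $(h,\vp) \in \Lie\G^0(A)$ with $h = 1 + \ve s$ and $\widetilde{\vp} = t + \ve r$ one computes $\vp(f) = f + \ve r f'$, hence $(h,\vp)(f) = (1+\ve s)(f + \ve r f') = f + \ve(sf + rf')$; so the infinitesimal action sends $f$ to $sf + rf'$, that is, to the differential operator $s + r\frac{\partial}{\partial t}$ of~\eqref{diff-operat}. The resulting map $\Lie\G^0(A) \to \End_A^{\rm c}(A((t)))$ is injective, since $sf + rf' = 0$ for all $f$ forces $s = 0$ (take $f = 1$) and then $r = 0$ (take $f = t$). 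Combined with the module computation above, it is an $A$-module isomorphism onto the space of continuous differential operators of order $\le 1$.

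Finally I would verify that this realization is a homomorphism of Lie algebras for the bracket~\eqref{bra} and the commutator bracket on operators, and then compute the commutator explicitly. The homomorphism property is the general fact that, when a group functor acts $A$-linearly on a module functor, the group commutator $d_1 d_2 d_1^{-1} d_2^{-1}$ of~\eqref{bra} linearizes to the operator commutator: expanding $(\id + \ve_1\rho_1)(\id + \ve_2\rho_2)(\id + \ve_1\rho_1)^{-1}(\id + \ve_2\rho_2)^{-1}$ over $A[\ve_1,\ve_2]/(\ve_1^2,\ve_2^2)$, the coefficient of $\ve_1\ve_2$ is $\rho_1\rho_2 - \rho_2\rho_1$, where $\rho_i = s_i + r_i\frac{\partial}{\partial t}$. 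A direct computation of $(s_1 + r_1\frac{\partial}{\partial t})(s_2 + r_2\frac{\partial}{\partial t}) - (s_2 + r_2\frac{\partial}{\partial t})(s_1 + r_1\frac{\partial}{\partial t})$, in which the second-order terms $r_1 r_2 f''$ cancel, yields $(r_1 s_2' - r_2 s_1') + (r_1 r_2' - r_2 r_1')\frac{\partial}{\partial t}$, which is exactly~\eqref{Lie-br}; this is the bracket of the semidirect product of the abelian Lie algebra $A((t))$ with $\Der(A((t))) = A((t))\frac{\partial}{\partial t}$ acting by $r\frac{\partial}{\partial t} \cdot s = rs'$, giving~\eqref{iso-Lie}. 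I expect the main subtlety to be the bookkeeping in the first step — checking that the nilpotency and unit constraints in the definitions of $\vmi$ and $\AutL$ impose no restriction after passage to dual numbers, so that one genuinely recovers all of $A((t))$ in both slots — together with the justification that the tangent functor turns the merely functorial product decomposition~\eqref{decaut} into a direct sum of $A$-modules.
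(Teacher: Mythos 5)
Your proposal is correct. Its core --- realizing $\Lie\G^0(A)$ through the action on $\LL(A)=A((t))$ and the first-order computation $(1+s\ve)\,\vp(f)=f+(sf+rf')\ve$ --- is exactly the paper's own proof of the module isomorphism and of~\eqref{diff-operat}; your careful check that the nilpotency and unit constraints in $\vmi$, $\AutLp$, $\AutLm$ become vacuous over $A[\ve]/(\ve^2)$ is the justification the paper compresses into the words ``by definition''. The genuine difference is in how the bracket~\eqref{Lie-br} is obtained. The paper computes it piecewise inside the group via~\eqref{bra}: the bracket of two derivation-type elements is quoted from~\cite[\S~4.1]{O1}, the bracket of two function-type elements vanishes since $(L\GG_m)^0$ is commutative, and the mixed bracket comes from the conjugation identity $(1+s_2\ve_2)\circ(t+r_1\ve_1)=(1+r_1s_2'\ve_1\ve_2)(1+s_2\ve_2)$ in $\G^0\bigl(A[\ve_1,\ve_2]/(\ve_1^2,\ve_2^2)\bigr)$. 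You instead prove that the operator realization is injective (evaluate at $f=1$ and $f=t$) and use the general principle that an $A$-linear action carries the group commutator $d_1d_2d_1^{-1}d_2^{-1}$ to the operator commutator modulo $(\ve_1^2,\ve_2^2)$, so a single commutator computation for first-order operators yields the whole formula at once. Your route is more uniform and self-contained (no case analysis, no external citation), at the price of the faithfulness check and of noting that the $\ve_1\ve_2$-coefficient of the action on $A((t))$ pins down the corresponding element of $\Lie\G^0(A)$; the paper's route stays elementary within the semidirect-product structure of the group and reuses the Witt-algebra computation already established in~\cite{O1}. Both are complete proofs of the proposition.
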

\begin{proof}
Let $A$ be a commutative ring.

By definition, $\Lie \G^0 (A) $  consists of elements $(1 + s \ve, \vp) \in \G^0 (A[\ve]/ (\ve^2))$, where ${s \in A((t))}$
and $\vp \in \AutL (A[\ve]/ (\ve^2))$ such that $\vp(t)= t + r \ve $ with $r \in A((t))$.

 For any $f  \in A((t))$ we have the action of $(1 + s \ve, \vp)$ on $f$ in $(A[\ve]/ (\ve^2))((t))$:
 $$
 (1 + s \ve, \vp)(f)= (1 + s\ve) \vp(f)= (1 + s \ve) \cdot (f  \circ (t + r \ve)) = (1 + s \ve)(f + r f' \ve)= f + (sf + rf') \ve  \, \mbox{.}
 $$
 Hence we obtain that the map $(1 + s \ve, \vp)  \mapsto s + r \frac{\partial}{\partial t} $  gives isomorphism~\eqref{iso-Lie} as $A$-modules and formula~\eqref{diff-operat}.

 To obtain the Lie bracket~\eqref{Lie-br} we use formula~\eqref{bra}. For the bracket between elements $r_1   \frac{\partial}{\partial t}$ and
  $r_2   \frac{\partial}{\partial t}$ see~\cite[\S~4.1]{O1}. The bracket between elements $s_1$ and $s_2$ is zero, because they come from the commutative group functor $(L \GG_m)^0$. Now we  calculate $[r_1   \frac{\partial}{\partial t}, s_2]$.  The answer follows from the following calculations in   $(A[\ve_1, \ve_2] / (\ve_1^2, \ve_2^2))((t))$:
  $$
  (1 + s_2 \ve_2) \circ (t + r_1 \ve_1) = 1+ (s_2 \circ (t + r_1 \ve_1)) \ve_2 = 1 + (s_2 + r_1 s_2' \ve_1) \ve_2 = (1+ r_1s_2' \ve_1 \ve_2)(1 + s_2 \ve_2)  \, \mbox{,}
  $$
  since in the group $\G^0(A[\ve_1, \ve_2]/ ( \ve_1^2, \ve_2^2))$ we have
  $$
  \vp_1 \cdot (1 + s_2 \ve_2) \cdot \vp_1^{-1} =  (1 + s_2 \ve_2) \circ (t + r_1 \ve_1)  \, \mbox{,}
  $$
  where $\vp_1 \in \AutL (A[\ve_1, \ve_2]/ (\ve_1^2, \ve_2^2))$
such that $\vp_1(t) = t + r_1  \ve_1$.
\end{proof}

\begin{nt} \label{tng-Lie} \em
We consider the group functor $\G$ (see~\eqref{G-funct}), which is represented by ind-affine ind-scheme. Clearly, from definition, for any commutative ring $A$ we have
$$
\Lie \G (A) = \Lie \G^0 (A)  \, \mbox{.}
$$
\end{nt}

\bigskip

We will need the following lemma, which is a punctured disc over $A$ analog of~\cite[Lemma~4.3]{ADKP}  in the holomorphic case.

\begin{lemma}  \label{commutant}
Let $A$ be a commutative ring such that $\frac{1}{2}  \in A$. Then we have
$$
\left[  \Lie \G^0(A) , \Lie \G^0(A)     \right] = \Lie \G^0(A)  \, \mbox{.}
$$
\end{lemma}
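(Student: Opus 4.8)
The plan is to work entirely with the explicit bracket on $\Lie \G^0(A)$ furnished by Proposition~\ref{prop-iso-Lie}, i.e. with formula~\eqref{Lie-br}, and to exhibit \emph{every} element $s + r \frac{\partial}{\partial t}$ of $\Lie \G^0(A)$ (with $s, r \in A((t))$ arbitrary Laurent series) as a finite $A$-linear combination of commutators. The inclusion $[\Lie \G^0(A), \Lie \G^0(A)] \subseteq \Lie \G^0(A)$ is automatic, so it suffices to prove $\supseteq$, and I would do this by treating the ``function part'' $A((t))$ and the ``vector-field part'' $A((t)) \frac{\partial}{\partial t}$ separately.

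For the function part I would specialise~\eqref{Lie-br} to the case $s_1 = r_2 = 0$, which gives $[r \frac{\partial}{\partial t}, s] = r s'$ for all $r, s \in A((t))$. Choosing $s = t$ then yields $[r \frac{\partial}{\partial t}, t] = r$ for every $r \in A((t))$, so each element of $A((t))$ is already a single commutator. This step requires no hypothesis on $A$ and, importantly, reaches an arbitrary (possibly infinite) series in one stroke.

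The vector-field part is where I expect the real difficulty to sit. Here~\eqref{Lie-br} restricts to the Witt-type bracket $[r_1 \frac{\partial}{\partial t}, r_2 \frac{\partial}{\partial t}] = (r_1 r_2' - r_2 r_1') \frac{\partial}{\partial t}$, and the naive diagonal brackets $[t^a \frac{\partial}{\partial t}, t^b \frac{\partial}{\partial t}] = (b - a)\, t^{a+b-1} \frac{\partial}{\partial t}$ are inadequate for two reasons: recovering a single monomial forces one to divide by the integer $b - a$, which need not be invertible in $A$ (e.g. in positive characteristic), and such monomial-by-monomial arguments cannot produce an arbitrary infinite Laurent series in finitely many brackets. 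The key I would exploit to defeat both obstacles at once is the identity, valid for every $r \in A((t))$,
\begin{equation*}
\left[ \frac{\partial}{\partial t},\, (t r) \frac{\partial}{\partial t} \right] - \left[ t \frac{\partial}{\partial t},\, r \frac{\partial}{\partial t} \right] = \big( (tr)' - (t r' - r) \big) \frac{\partial}{\partial t} = 2\, r \frac{\partial}{\partial t} \, \mbox{,}
\end{equation*}
which is a one-line consequence of~\eqref{Lie-br} (using $(tr)' = r + t r'$). Dividing by $2$, which is the \emph{only} point where the hypothesis $\frac{1}{2} \in A$ is used, writes an arbitrary $r \frac{\partial}{\partial t}$ as an explicit combination of two commutators, with no restriction on $r$ and handling infinite series directly.

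Combining the two steps shows that every $s + r \frac{\partial}{\partial t}$ lies in $[\Lie \G^0(A), \Lie \G^0(A)]$, giving the asserted equality. The hard part, as isolated above, is precisely to avoid term-by-term inversion of integers and the passage to infinite series; both are circumvented by choosing closed-form commutants ($s = t$ for functions, and the pair $\frac{\partial}{\partial t}$, $t \frac{\partial}{\partial t}$ tested against $(tr) \frac{\partial}{\partial t}$ and $r \frac{\partial}{\partial t}$ for vector fields) instead of working one monomial at a time.
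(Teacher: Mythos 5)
Your proof is correct, and in substance it rests on the same computation as the paper's, packaged differently. Writing $\partial$ for $\frac{\partial}{\partial t}$: the paper first shows that $\left[ \Lie \G^0(A), \Lie \G^0(A) \right]$ is an $A((t))$-module via the identity
\begin{equation*}
f \left[ s_1 + r_1 \partial , \, s_2 + r_2 \partial \right] \;=\; \left[ f r_1 \partial , \, s_2 + \tfrac{1}{2} r_2 \partial \right] \;+\; \left[ s_1 + \tfrac{1}{2} r_1 \partial , \, f r_2 \partial \right]
\end{equation*}
(this is where $\tfrac{1}{2} \in A$ enters there), and then concludes because $1 = \left[ \partial, t \right]$ and $\partial = \left[ \partial, t \partial \right]$ are single brackets. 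Your two closed-form formulas, $s = \left[ s \partial , t \right]$ and $2\, r \partial = \left[ \partial , (tr) \partial \right] - \left[ t \partial , r \partial \right]$, are exactly what that identity yields when specialized to $f = s$, $(s_1, r_1, s_2, r_2) = (0, 1, t, 0)$, respectively to $f = r$, $(0, 1, 0, t)$, so the same cancellations underlie both proofs; what differs is the organization. The paper's packaging isolates a reusable structural fact (the commutant is an $A((t))$-submodule, so it suffices to hit the two generators $1$ and $\partial$ over $A((t))$), whereas yours inlines that fact into explicit expressions presenting an arbitrary element of $\Lie \G^0(A)$ as an $A$-combination of three brackets; this is more self-contained and makes transparent the sharper point that $\tfrac{1}{2}$ is needed only for the vector-field part, the function part $A((t))$ being reached with no hypothesis on $A$ at all. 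Both versions correctly sidestep the monomial-by-monomial obstructions (non-invertible integer coefficients, infinitely many terms) that you flag.
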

\begin{proof}
We note that $\left[  \Lie \G^0(A) , \Lie \G^0(A)     \right]$ is an $A((t))$-module, because
$$
f \left[ s_1 + r_1   \frac{\partial}{\partial t}, s_2 + r_2 \frac{\partial}{\partial t} \right] =
\left[  f r_1  \frac{\partial}{\partial t},  s_2 + \frac{1}{2} r_2 \frac{\partial}{\partial t}   \right] +
\left[   s_1 +  \frac{1}{2} r_1   \frac{\partial}{\partial t},  f r_2 \frac{\partial}{\partial t}    \right]   \, \mbox{,}
$$
where $f, r_i, s_i \in A((t))$ for $i=1$ and $i=2$. Therefore it is enough to show that elements $1$ and $\frac{\partial}{\partial t}$ belong to
$\left[  \Lie \G^0(A) , \Lie \G^0(A)     \right]$. We have
$$
\frac{\partial}{\partial t} = \left [  \frac{\partial}{\partial t}, t \frac{\partial}{\partial t}         \right]  \qquad \mbox{and}
\qquad
1 = \left [  \frac{\partial}{\partial t}, t        \right]  \, \mbox{.}
$$
\end{proof}

\section{Determinant central extension of~$\G$}

\label{main-centr}

We construct the determinant central extension of the group functor $\G$ (see~\eqref{G-funct})  by the group functor $\GG_m$.

\subsection{Block matrix}
We consider the group functor $\G^0$ (see~\eqref{decg0}).
We will use  the following decomposition into the direct sum.

\begin{prop}  \label{dir-sum-new}
Let $A$ be a commutative ring. For any $g \in \G^0(A)$  we have
\begin{equation}  \label{dir-sum}
A((t))= t^{-1}A[t^{-1}]  \oplus g( A[[t]])  \, \mbox{.}
\end{equation}
\end{prop}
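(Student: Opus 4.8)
The plan is to reduce the claim to the invertibility of a single $A$-linear operator and then exploit the explicit triangular shape of $g$. Let $p \colon A((t)) \to A[[t]]$ be the $A$-linear projection with kernel $t^{-1}A[t^{-1}]$ (deletion of the principal part). I claim the asserted decomposition is equivalent to the statement that $P := p \circ g|_{A[[t]]} \colon A[[t]] \to A[[t]]$ is an automorphism of the $A$-module $A[[t]]$: if the sum is direct then $p$ restricts to an isomorphism $g(A[[t]]) \to A[[t]]$ (the complementary summand being $\ker p$), and composing with the isomorphism $g \colon A[[t]] \to g(A[[t]])$ gives $P$; conversely, if $P$ is invertible, every $x \in A((t))$ splits uniquely as $x = (x - g P^{-1} p(x)) + g P^{-1} p(x)$ with the first term in $t^{-1}A[t^{-1}]$ and the second in $g(A[[t]])$. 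So the whole problem becomes: $P$ is invertible.

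Next I would read off the matrix of $P$ from the structure of $\G^0(A) = (L \GG_m)^0(A) \rtimes \AutL(A)$. Writing $g = (h, \vp)$ and using isomorphisms~\eqref{md} and~\eqref{isom}, one has $\vp(t) = c_1 t (1 + \epsilon)$ with $c_1 \in A^*$ and $1 + \epsilon \in (L \GG_m)^0(A)$, while $h \in (L \GG_m)^0(A)$. Hence $g(t^n) = c_1^n t^n \rho_n$ with $\rho_n = h (1 + \epsilon)^n \in (L \GG_m)^0(A)$; by the description $(L \GG_m)^0 = \GG_m \times \vpl \times \vmi$, the element $\rho_n$ has unit $t^0$-coefficient, strictly negative coefficients in $\Nil(A)$, and arbitrary positive coefficients. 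Writing $P(t^j) = \sum_{i \ge 0} M_{ij} t^i$, this shows the diagonal entries $M_{nn}$ are units, the strictly upper entries $M_{ij}$ (with $i < j$) are nilpotent, and the strictly lower entries (with $i > j$) raise the $t$-order. Moreover, since $g$ acts by continuous automorphisms, $g(t^n) \to 0$ $t$-adically as $n \to \infty$, so each row of $(M_{ij})$ is finite. Finally all the nilpotent entries lie in the ideal $I$ generated by the finitely many nilpotent elements occurring in the data of $g$ (the negative coefficients of $h$, together with the $\AutLm$-part and the constant term of $\vp$); being finitely generated by nilpotents, $I$ satisfies $I^m = 0$ for some $m$.

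With this in hand the invertibility of $P$ splits into two independent pieces. I would write $P = B + U$, where $B = \Delta + L$ is the diagonal plus strictly lower part and $U$ is the strictly upper part. The operator $B$ is invertible: $\Delta$ is diagonal with unit entries, $\Delta^{-1} L$ strictly raises the $t$-order (it sends $t^k A[[t]]$ into $t^{k+1} A[[t]]$), so $\id + \Delta^{-1} L$ is inverted by a $t$-adically convergent Neumann series. Setting $V := B^{-1} U$: since $U$ is row-finite with all entries in $I$, it maps $A[[t]]$ into $I \cdot A[[t]]$, and then $A$-linearity together with finite generation of $I$ gives $V(I^k A[[t]]) \subseteq I^{k+1} A[[t]]$; hence $V^m = 0$ and $\id + V$ is invertible. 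Therefore $P = B(\id + V)$ is an automorphism, which yields the desired decomposition.

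The substance is entirely in the nilpotents: over a reduced ring $A$ the statement is immediate, since then the $\vmi$- and $\AutLm$-parts vanish, $h \in A[[t]]^*$ and $\vp \in \AutLp(A)$ preserves $A[[t]]$, so $g(A[[t]]) = A[[t]]$ outright. The two points that make the general case go through, and which I expect to be the main obstacle to state cleanly, are (i) that the nilpotent coefficients appearing in $g$ generate a genuinely nilpotent ideal $I$, and (ii) that continuity of $g$ forces the matrix of $P$ to be row-finite; together these let the $t$-adic filtration (controlling the lower part) and the $I$-adic filtration (controlling the upper part) run separately and then be combined in the factorization $P = B(\id + V)$.
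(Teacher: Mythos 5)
Your proof is correct, but it takes a genuinely different route from the paper's. The paper works at the group level: using decompositions~\eqref{md}, \eqref{declg} and~\eqref{decaut} it factors $g = p_- \, p_+$, where $p_+$ restricts to an automorphism of $A[[t]]$ and $p_-$ restricts to an automorphism of $t^{-1}A[t^{-1}]$ (the only computation being the conjugation $\tilde{h} = \vp_-^{-1} h_+ \vp_-$ needed to move the plus-part of $h$ across $\vp_-$, followed by re-decomposing $\tilde{h}=\tilde{h}_-\tilde{h}_+$); the decomposition~\eqref{dir-sum} then follows from the one-line chain $A((t)) = p_-\left(t^{-1}A[t^{-1}] \oplus A[[t]]\right) = t^{-1}A[t^{-1}] \oplus p_- p_+ (A[[t]])$. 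In the paper, bijectivity of $d_g$ (your operator $P$) is Corollary~\ref{block}, deduced \emph{from} the proposition; you reverse the logical order, proving invertibility of $d_g = p \circ g|_{A[[t]]}$ directly via the operator factorization $P = B(\id + V)$, with the lower-triangular part inverted $t$-adically by a Neumann series and the upper-triangular part killed by nilpotence of the finitely generated ideal $I$. Both arguments rest on the same structural input --- the plus/minus decompositions with nilpotent negative coefficients --- but the paper packages all the nilpotence difficulties into the already-established group factorizations, which makes its proof shorter and essentially computation-free, whereas your matrix argument carries more bookkeeping (row-finiteness, the common ideal $I$, well-definedness of $B$ and $U$) but makes explicit \emph{why} $d_g$ is invertible and delivers the proposition and Corollary~\ref{block} simultaneously. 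The two delicate points in your plan --- the equivalence of the direct-sum statement with invertibility of $P$, and the fact that all strictly upper entries of $P$ lie in one finitely generated ideal generated by the nilpotent coefficients occurring in $h$ and $\vp$ --- both check out: in particular, writing $\vp(t) = c_1 t \, v_+ v_-$ with $v_\pm$ as in~\eqref{md}, every negative coefficient of every $\rho_n = h(1+\epsilon)^n$ indeed lies in the ideal generated by the coefficients of $h_- - 1$ and $v_- - 1$, which is contained in the ideal you describe.
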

\begin{proof}
We note that if an element $g_1$ belongs to the subgroup $\GG_m(A) \times \vpl (A)$ or to the subgroup $\AutLp(A)$  of the group $\G^0(A)$,
then the action of the element $g_1$ restricted to the $A$-submodule $A[[t]]$ is an isomorphism of this submodule.

If  an element $g_2$ belongs to the subgroup $\GG_m(A) \times \vmi(A)$ or to the subgroup $\AutLm(A)$  of the group $\G^0(A)$,
then the action of the element $g_2$ restricted to the $A$-submodule $t^{-1}A[t^{-1}]$ is an isomorphism of this submodule.

From decompositions~\eqref{decg0}, \eqref{declg} and~\eqref{decaut}  we have that for any $g \in \G^0(A)$ there is a  decomposition
$$
g = h_-  \,  h_+  \, \vp_-  \,  \vp_+  \, \mbox{,}
$$
where elements $h_-  \in \vmi(A)$, $h_+ \in \GG_m(A) \times  \vpl(A)$, $\vp_- \in \AutLm(A)$ and ${\vp_+ \in \AutLp(A)}$.

We have that an element $\vp_-^{-1} \, h_+  \, \vp_-$ belongs to the subgroup $(L \GG_m)^0(A)$. We denote this element by $\tilde{h}$.
According to~\eqref{declg}, we have a decomposition $\tilde{h} = \tilde{h}_-  \, \tilde{h}_+ $,
where elements $\tilde{h}_-  \in \vmi(A) $ and $\tilde{h}_+  \in \GG_m(A) \times  \vpl(A)$.  Hence we obtain
$$
g = h_- \,  \vp_-  \,  \vp_-^{-1}  \,  h_+  \,  \vp_-  \,  \vp_+  = h_- \,  \vp_-  \,  \tilde{h}  \, \vp_+  =  h_- \, \vp_-  \,  \tilde{h}_-  \,  \tilde{h}_+  \,  \vp_+ = p_- \, p_+  \, \mbox{,}
$$
where $p_- =  h_- \, \vp_-  \,  \tilde{h}_-$ and $p_+ = \tilde{h}_+  \,  \vp_+$. Besides,  the action of the element $p_-$ restricted to the $A$-submodule $t^{-1}A[t^{-1}]$ is an isomorphism of this submodule, and the action of the element $p_+$ restricted to the $A$-submodule $A[[t]]$ is an isomorphism of this submodule.

Now we obtain
\begin{multline*}
A((t))  = p_- (A((t))) = p_- ( t^{-1}A[t^{-1}]  \oplus  A[[t]]) =  p_- ( p_-^{-1} (t^{-1}A[t^{-1}])  \oplus  A[[t]]) = \\
= t^{-1}A[t^{-1}]  \oplus p_-  ( A[[t]]) =  t^{-1}A[t^{-1}]  \oplus p_- \,  p_+ ( A[[t]]) =
t^{-1}A[t^{-1}]  \oplus g( A[[t]])
\end{multline*}

\end{proof}

For any commutative ring $A$ and
 any element $g \in  \G(A)$  we introduce the  following block matrix  with respect to the decomposition
${A((t)) = t^{-1} A[t^{-1}] \oplus A[[t]]}$ for the action of   $g$  on  $A((t))$:
\begin{equation}
\label{matr'}
\begin{pmatrix}
a_g & b_g  \\
c_g & d_g
\end{pmatrix}
\end{equation}
where $d_g : A[[t]]  \to A[[t]]$, $d_g= \mathop{\rm pr} \cdot (g |_{A[[t]]})$ and $\mathop{\rm pr}: A((t))  \to A[[t]]$
is the projection. (Here  the matrix acts from the left on an element-column from $A((t))$.) We note that the $A$-module map $d_g$ is continuous as the composition of continuous maps.

As an immediate corollary of Proposition~\ref{dir-sum-new} we obtain the statement.

\begin{cons}  \label{block}
 For any commutative ring $A$ and  any $g \in \G^0(A)$ the map $d_g$ is bijective.
\end{cons}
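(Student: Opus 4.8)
The plan is to deduce the statement directly from Proposition~\ref{dir-sum-new} by a two-complements argument. Fix a commutative ring $A$ and an element $g \in \G^0(A)$, and abbreviate $V_- = t^{-1}A[t^{-1}]$ and $V_+ = A[[t]]$, so that the standard decomposition reads $A((t)) = V_- \oplus V_+$ and the projection $\mathop{\rm pr} : A((t)) \to V_+$ has kernel $V_-$. By Proposition~\ref{dir-sum-new} we also have $A((t)) = V_- \oplus g(V_+)$. The crucial observation, on which everything rests, is that this second decomposition shares the \emph{same} first summand $V_-$ with the standard one.

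First I would record that $\mathop{\rm pr}$ restricts to an isomorphism $g(V_+) \stackrel{\sim}{\to} V_+$. Injectivity is immediate: if $w \in g(V_+)$ and $\mathop{\rm pr}(w) = 0$, then $w \in \ker \mathop{\rm pr} = V_-$, whereas $g(V_+) \cap V_- = 0$ by the direct-sum decomposition of Proposition~\ref{dir-sum-new}, so $w = 0$. For surjectivity, given $v \in V_+$ I would write $v = u + w$ with $u \in V_-$ and $w \in g(V_+)$, using $A((t)) = V_- \oplus g(V_+)$; applying $\mathop{\rm pr}$ and using $\mathop{\rm pr}(u) = 0$ together with $\mathop{\rm pr}(v) = v$ yields $\mathop{\rm pr}(w) = v$.

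Finally, since $g \in \G^0(A)$ acts as an invertible $A$-module map on $A((t))$, the restriction $g|_{V_+} : V_+ \to g(V_+)$ is a bijection, and by definition $d_g = \mathop{\rm pr} \cdot (g|_{V_+})$ is the composition of this bijection with the isomorphism $\mathop{\rm pr}|_{g(V_+)} : g(V_+) \stackrel{\sim}{\to} V_+$ established above; hence $d_g$ is bijective. I do not expect any genuine obstacle here: the only thing to keep track of is that the two direct-sum decompositions of $A((t))$ have the common first summand $t^{-1}A[t^{-1}]$, which is exactly what Proposition~\ref{dir-sum-new} supplies and what forces the projection to restrict to an isomorphism on $g(A[[t]])$.
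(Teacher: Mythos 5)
Your proof is correct and is essentially the paper's own argument: the paper derives the corollary as an ``immediate'' consequence of Proposition~\ref{dir-sum-new}, and your two-complements reasoning (the projection along $t^{-1}A[t^{-1}]$ restricts to an isomorphism $g(A[[t]]) \to A[[t]]$, composed with the bijection $g|_{A[[t]]}$) is exactly the deduction the paper leaves implicit.
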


\subsection{Construction of the   determinant central extension of $\G^0$}  \label{constr-det-g0}

We will construct a central extension of the group functor $\G^0$ by the group functor $\GG_m$. We call this central extension as the determinant central extension.  (This construction is similar to  the corresponding constructions for the group $\AutL(A)$ from~\cite[\S~3.3]{O1} and to   the constructions in the smooth case from~\cite[\S~6.6]{PS}.)

Let $A$ be any commutative ring.

 Let $\GL_A(A[[t]])$ be the group of all $A$-module automorphisms of  $A[[t]] $.

We construct a group  $\widehat{\G^0}(A)$ that as a set consists of all pairs $(g, r)$, where $g \in \G^0(A)$ and $r \in \GL_A(A[[t]])$
such that there is an integer $n > 0$, which depends on $g$ and $r$, with the property $d_g |_{t^nA[[t]]} = r |_{t^nA[[t]]}$. We note that
$r$ is  a continuous map, since $d_g -r$ and $d_g$ are continuous maps. Using that $g$, $g^{-1}$ and $r$ are continuous maps, it is easy to see that the set
$\widehat{\G^0}(A)$ is a subgroup in $\G^0(A) \times \GL_A(A[[t]])$. This gives the group structure on the set $\widehat{\G^0}(A)$.

Now we have the homomorphism  $\widehat{\G^0}(A)  \stackrel{\alpha}{\lrto} \G^0(A)$ given as $(g,r)  \mapsto g$. By Corollary~\ref{block}, this homomorphism  is surjective.

Therefore we obtain
an exact sequence of groups
\begin{equation}  \label{GLf}
1 \lrto \GL_f(A)  \lrto \widehat{\G^0}(A)  \stackrel{\alpha}{\lrto} \G^0(A)  \lrto 1  \, \mbox{,}
\end{equation}
where $\GL_f(A)$ is  a group that consists of all elements $r \in \GL_A(A[[t]])$ such that there is an integer $n >0$, which depends on $r$,
with the property $r |_{t^n A[[t]]} = {\rm id}$.

We note that there is a well-defined determinant homomorphism
$$
\det  \, : \, \GL_f(A)  \lrto A^*  \,
$$
where $\det(r)$ for $r \in \GL_f(A)$ is the determinant of the action of $r $ on a free $A$-module  $A[[t]]/ t^n A[[t]]]$, where $r |_{t^n A[[t]]} = {\rm id}$.

\begin{lemma}  \label{lem-det}
For any $r_1 \in \GL_f(A)$  and $(g,r) \in \widehat{\G^0}(A)$ we consider
$$(g^{-1}, r^{-1}) (1,r_1) (g,r)= (1,r_2)  \mbox{,} $$
where $r_2 \in \GL_f(A)$.
Then we have $\det(r_2) = \det(r_1)$.
\end{lemma}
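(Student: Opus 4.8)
The plan is to reduce the equality $\det(r_2)=\det(r_1)$ to the invariance of the determinant of a square matrix under conjugation, by realizing $r_1$ and $r_2$ as reductions of the relevant operators on two finite free quotients of $A[[t]]$ that are linked by $r$. First I would record that, since the group law on $\widehat{\G^0}(A)\subset \G^0(A)\times \GL_A(A[[t]])$ is componentwise and the first components multiply to $1$, the element $r_2$ is nothing but $r_2=r^{-1}r_1 r$.

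Next I would fix an integer $n_1>0$ with $r_1|_{t^{n_1}A[[t]]}={\rm id}$. Because $r$ is continuous (as both $d_g$ and $d_g-r$ are), there is an integer $N\ge n_1$ with $r(t^N A[[t]])\subseteq t^{n_1}A[[t]]$. Set $M=t^N A[[t]]$ and $M'=r(M)$, so that $M'\subseteq t^{n_1}A[[t]]$. For $v\in M$ one has $r(v)\in M'\subseteq t^{n_1}A[[t]]$, hence $r_1(r(v))=r(v)$ and $r_2(v)=v$; thus $r_2|_M={\rm id}$, so $r_2\in\GL_f(A)$ and, by definition, $\det(r_2)$ is computed on the free module $A[[t]]/M$.

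The key step is to descend the three factors of $r_2=r^{-1}r_1r$ to maps between the finite free $A$-modules $A[[t]]/M$ and $A[[t]]/M'$. Since $r(M)=M'$, the operator $r$ induces an isomorphism $\bar r\colon A[[t]]/M\to A[[t]]/M'$, and $r^{-1}$ induces its inverse $A[[t]]/M'\to A[[t]]/M$; in particular $A[[t]]/M'$ is free of rank $N$. As $M'\subseteq t^{n_1}A[[t]]$ and $r_1$ acts as the identity on $t^{n_1}A[[t]]$, the operator $r_1$ preserves $M'$ and descends to $\bar r_1\colon A[[t]]/M'\to A[[t]]/M'$. Reducing the identity $r_2=r^{-1}r_1r$ modulo $M$ then shows that $r_2$ acts on $A[[t]]/M$ as $\bar r^{-1}\bar r_1\bar r$. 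Choosing bases of $A[[t]]/M$ and $A[[t]]/M'$ and letting $R$, $S$ denote the matrices of $\bar r$, $\bar r_1$, the operator $r_2$ is represented by $R^{-1}SR$, whence $\det(r_2)=\det(R^{-1}SR)=\det(S)=\det\big(\bar r_1\mid A[[t]]/M'\big)$. Finally, since $r_1$ is the identity on $t^{n_1}A[[t]]\supseteq M,M'$, the determinant of $r_1$ does not depend on which of the levels $M$, $M'$ (or $t^{n_1}A[[t]]$) is used, so $\det\big(\bar r_1\mid A[[t]]/M'\big)=\det(r_1)$, giving the claim.

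The main obstacle to make precise is the middle step: individually the determinants of $\bar r$ and $\bar r^{-1}$ are determinants of isomorphisms between the \emph{different} free modules $A[[t]]/M$ and $A[[t]]/M'$ and are not canonically elements of $A^*$. This is resolved by observing that once bases are fixed these become honest mutually inverse matrices $R$ and $R^{-1}$, so that in the conjugation $R^{-1}SR$ their contributions cancel and only the conjugation-invariant quantity $\det(S)$ survives; the one external input needed is the independence of $\det$ of the truncation level, which is exactly what makes the determinant homomorphism on $\GL_f(A)$ well defined.
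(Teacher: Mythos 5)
Your proof is correct and follows essentially the same route as the paper: both reduce $r_2 = r^{-1}r_1 r$ to the finite free quotients $A[[t]]/t^N A[[t]]$ and $A[[t]]/r(t^N A[[t]])$ (the paper phrases this with top exterior powers rather than matrices), cancel the contributions of the conjugating maps, and identify the determinant of the action induced by $r_1$ on $A[[t]]/r(t^N A[[t]])$ with $\det(r_1)$. The only fine point is that this last identification uses invariance of the determinant for a level $r(t^N A[[t]])$ that is \emph{not} of the form $t^k A[[t]]$, so it is slightly more than the well-definedness of $\det$ on $\GL_f(A)$: one needs that $\det$ is unchanged by adding a finitely generated projective module (here $t^{n_1}A[[t]]/r(t^N A[[t]])$) on which the map acts as the identity, which is exactly the fact from Milnor, \S~3, that the paper cites.
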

\begin{proof}
We consider an integer $n > 0$ such that $r_1 |_{t^n A[[t]]} = {\rm id}$ and  an integer $l > 0$ such that $r(t^l A[[t]])  \subset t^n A[[t]]$. Then we have $r_2 |_{t^l A[[t]]} = {\rm id}$ and $\det(r_2)$ comes from the following composition of homomorphisms of $A$-modules of rank $1$
$$
\bigwedge^l (A[[t]]/ t^l A[[t]]) \to \bigwedge^l (A[[t]]/ r( t^l A[[t]])) \to \bigwedge^l (A[[t]]/ r( t^l A[[t]]))
\to   \bigwedge^l (A[[t]]/ t^l A[[t]])  \, \mbox{,}
$$
where the first arrow is induced by the map $r$, the second arrow is induced by the map $r_1$, the third arrow is induced by the map $r^{-1}$.
Besides, the second arrow is the multiplication by $\det(r_1)$, since the homomorphism $\det$ does not depend on the choice of a basis of a free $A$-module of finite rank and addition to this $A$-module another projective $A$-module of finite rank, see~\cite[\S~3]{Mi}.
\end{proof}

We denote a subgroup $T(A) = \Ker \det$ of the group $\GL_f(A)$. By Lemma~\ref{lem-det}, $T(A)$ is a normal subgroup in  $\widehat{\G^0}(A)$.
We introduce a group $\widetilde{{\G^0}}(A) = \widehat{\G^0}(A)/ T(A)$. From exact sequence~\eqref{GLf} we obtain a central extension of groups
\begin{equation}  \label{det-g0}
1 \lrto A^*  \lrto \widetilde{{\G^0}}(A)  \lrto \G^0(A)  \lrto 1  \, \mbox{.}
\end{equation}

We call this central extension as {\em the determinant central extension}.

We note that the map $\alpha$ in~\eqref{GLf} has a section $g \mapsto (g, d_g)$ as sets. This induces   a section  $\sigma$ of  the determinant central extension~\eqref{det-g0}. Using the section $\sigma$, in the standard way we obtain a $2$-cocycle
${D  }$ for the determinant central extension
such that ${D(x,y) =  \sigma(x) \sigma(y) \sigma(xy)^{-1} }$, where  $x,y \in \G^0(A)$. From the contruction of $\sigma$ we obtain that the $2$-cocycle $D$
 is explicitly written in the following way
\begin{equation}  \label{coc-D-2}
D(x,y) = \det (d_x \cdot d_y  \cdot d_{xy}^{-1}  )  \, \mbox{,}
\end{equation}
where      $d_{xy}= c_x \cdot b_y + d_x  \cdot d_y$ (recall  formula~\eqref{matr'}). The determinant $\det$ in  formula~\eqref{coc-D-2}  is well-defined, since there is an integer $n \ge 0$ such that
${(d_x \cdot d_y \cdot d_{xy}^{-1})  |_{t^n A[[t]]}  = {\rm id}}$.

Clearly, the constructions of central extension~\eqref{det-g0} and $2$-cocycle~\eqref{coc-D-2} are functorial with respect to the ring~$A$. Therefore we obtain the following proposition.

\begin{prop}  \label{prop-det}
The central extension~\eqref{det-g0} gives the determinant central extension of the group functor $\G^0$ by the commutative group functor $\GG_m$.
This central extension admits a section $\sigma$ which gives a $2$-cocycle $D$  of $\G_0$ by $\GG_m$ given by formula~\eqref{coc-D-2}.
\end{prop}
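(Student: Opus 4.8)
The plan is to verify that the construction of central extension~\eqref{det-g0} and $2$-cocycle~\eqref{coc-D-2}, which was carried out for a fixed commutative ring $A$, is natural in $A$, so that assembling over all $A$ yields a central extension of the group functor $\G^0$ by the commutative group functor $\GG_m$ together with a distinguished $2$-cocycle. The statement really has two parts: first, that \eqref{det-g0} defines an extension \emph{of group functors} in the sense of Section~\ref{cohom-sec}; and second, that the explicit formula~\eqref{coc-D-2} is a genuine morphism of functors $\G^0 \times \G^0 \to \GG_m$, hence a $2$-cocycle representing the class of this extension.

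First I would check functoriality on morphisms. For any homomorphism of commutative rings $A_1 \to A_2$, the natural maps $A_1((t)) \to A_2((t))$ and $A_1[[t]] \to A_2[[t]]$ induce compatible maps on $\G^0$, on $\GL_A(A[[t]])$, and on the decomposition ${A((t)) = t^{-1}A[t^{-1}] \oplus A[[t]]}$ of Proposition~\ref{dir-sum-new}. Under these maps the block matrix~\eqref{matr'} is transported correctly (the projection $\mathop{\rm pr}$ commutes with base change), so $d_g$, the subgroup $\GL_f(A)$, and the determinant homomorphism $\det : \GL_f(A) \to A^*$ are all compatible with $A_1 \to A_2$; in particular, by the invariance statement of Lemma~\ref{lem-det}, the normal subgroup $T(A) = \Ker \det$ is carried to $T(A_2)$, so the quotient $\widetilde{\G^0}(A) = \widehat{\G^0}(A)/T(A)$ is functorial. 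Thus \eqref{det-g0} becomes a short exact sequence of group functors in the sense of Section~\ref{cohom-sec}, and it is central because $\GG_m$ receives the image of the central subgroup $A^*$, on which $\G^0(A)$ acts trivially (the action in~\eqref{det-g0} comes from inner automorphisms, and Lemma~\ref{lem-det} shows conjugation fixes $\det$).

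Next I would confirm that the section and the resulting cocycle are functorial. The set-theoretic section $g \mapsto (g, d_g)$ of $\alpha$ in~\eqref{GLf} is defined by the same base-change-compatible formula for every $A$, so it descends to a section $\sigma$ of~\eqref{det-g0} that is a morphism of functors $\G^0 \to \widetilde{\G^0}$. Then $D(x,y) = \sigma(x)\sigma(y)\sigma(xy)^{-1}$ lands in $\GG_m$ and, by the derivation already given, equals $\det(d_x \cdot d_y \cdot d_{xy}^{-1})$. Since each entry of this expression is built from the block matrix~\eqref{matr'} by operations compatible with ring homomorphisms, $D$ defines an element of $\Hom(\G^0 \times \G^0, \GG_m) = C^2(\G^0, \GG_m)$, and it is automatically a $2$-cocycle because it arises from an honest extension with an honest section.

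The one point deserving genuine care — the main obstacle — is the well-definedness of the determinant $\det(d_x \cdot d_y \cdot d_{xy}^{-1})$ uniformly in $A$: one must confirm that there is an integer $n \ge 0$ with $(d_x d_y d_{xy}^{-1})|_{t^n A[[t]]} = \mathrm{id}$, i.e. that $d_x d_y d_{xy}^{-1}$ lies in $\GL_f(A)$, and that the value of $\det$ is independent of the choice of $n$ and of base. The stabilization follows from $d_{xy} = c_x b_y + d_x d_y$ together with the fact that, for $g \in \G^0(A)$, the off-diagonal block $c_g$ and the discrepancy between $d_g$ and the full action $g|_{A[[t]]}$ are zero on a deep enough submodule $t^n A[[t]]$ (by continuity of $g$ and of the projection $\mathop{\rm pr}$); the independence from $n$ is exactly the stability property of the determinant recalled from~\cite[\S~3]{Mi} and already used in Lemma~\ref{lem-det}. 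Once these points are in place, the functoriality assertions of the previous two paragraphs assemble the per-ring construction into the statement, completing the proof.
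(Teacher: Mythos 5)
Your plan is the paper's own: everything before the proposition is the per-ring construction, and the paper's entire proof is the observation that this construction is functorial in $A$; your treatment of centrality via Lemma~\ref{lem-det} and of the section $g \mapsto (g,d_g)$ giving $D$ by formula~\eqref{coc-D-2} matches the paper. However, one step of your functoriality check fails as written: a ring homomorphism $A_1 \to A_2$ does \emph{not} induce a map $\GL_{A_1}(A_1[[t]]) \to \GL_{A_2}(A_2[[t]])$, because $A_1[[t]] \otimes_{A_1} A_2 \to A_2[[t]]$ is not an isomorphism in general, so an arbitrary (possibly non-continuous) $A_1$-module automorphism of $A_1[[t]]$ has no natural extension to $A_2[[t]]$. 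What makes the construction functorial is that every automorphism actually occurring is continuous in the $t$-adic topology: the paper notes that $r$ is continuous for $(g,r) \in \widehat{\G^0}(A)$ (and then so is $r^{-1}$, which differs from the continuous map $d_{g^{-1}}$ by a map factoring through the discrete module $A[[t]]/t^nA[[t]]$), and elements of $\GL_f(A)$ are the identity plus a map of finite rank. A continuous $A_1$-linear endomorphism of $A_1[[t]]$ restricts to $A_1[t]$, base changes, and extends uniquely to the completion $A_2[[t]]$; this is what yields maps $\widehat{\G^0}(A_1) \to \widehat{\G^0}(A_2)$ and $\GL_f(A_1) \to \GL_f(A_2)$ compatible with $\det$, hence $\widetilde{\G^0}(A_1) \to \widetilde{\G^0}(A_2)$ and the functoriality of~\eqref{det-g0}, $\sigma$ and $D$. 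So the argument must run through continuity of the relevant elements, not through a (nonexistent) functoriality of the full automorphism group.

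Two smaller repairs in your stabilization paragraph. The block that vanishes on a deep enough submodule $t^nA[[t]]$ is $b_g$, i.e.\ the discrepancy $g|_{A[[t]]} - d_g$; the block $c_g$ has domain $t^{-1}A[t^{-1}]$, so your statement about $c_g$ is not meaningful (and is not needed). Moreover, writing $d_x d_y d_{xy}^{-1} = \mathrm{id} - c_x b_y d_{xy}^{-1}$, the vanishing of $b_y$ on $t^mA[[t]]$ is not yet sufficient: you must also ensure that $d_{xy}^{-1}(t^nA[[t]]) \subset t^mA[[t]]$ for some $n$. This holds because on deep enough submodules $d_{xy}$ coincides with $xy$, and $xy$ acts on $A((t))$ by homeomorphisms, so $d_{xy}(t^mA[[t]])$ is open and contains some $t^nA[[t]]$. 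With these corrections your proposal correctly fills in the paper's ``clearly functorial'' remark.
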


\subsection{Construction of the   determinant central extension of $\G$}  \label{centr-det-g}

We will construct a central extension of the group functor  $\G$ by the group functor $\GG_m$. We will call this central extension also as the determinant central extension. (In section~\ref{uniqueness}  we will prove that this central extension restricted to $\G^0$  coincides with the determinant central extension from Proposition~\ref{prop-det}.)

Let $A$ be any commutative ring.

By~\cite[\S~3.2]{O1}, for any element  $g$ from the group $\G(A)$  and any integer $l$ such that $t^l A[[t]]  \subset g (A[[t]]) $
 the $A$-module $g(A[[t]]) / t^l A[[t]]$ is projective and finitely generated. For any $g_1, g_2 \in \G(A)$
 the projective $A$-module of rank $1$
 $$
 \Hom\nolimits_A \left(\bigwedge^{\rm max}(g_1(A[[t]])/ t^l A[[t]]),  \, \bigwedge^{\rm max}(g_2(A[[t]])/ t^lA[[t]]) \right)
$$
does not depend on the choice of an integer $l$ up to a unique isomorphism. We identify over all such $l$ all these projective $A$-modules via the following
definition of the projective $A$-module of rank $1$:
$$
\det(g_1(A[[t]]) \mid g_2(A[[t]])) = \varinjlim_l \Hom\nolimits_A \left(\bigwedge^{\rm max}(g_1(A[[t]])/ t^l A[[t]]), \, \bigwedge^{\rm max}(g_2(A[[[t]])/ t^lA[[t]]) \right)  \, \mbox{.}
$$

For any elements $g_1, g_2, g_3 \in \G(A)$ we have a canonical isomorphism of $A$-modules
\begin{equation}
\label{iso-tens}
\det(g_1(A[[t]]) \mid g_2(A[[t]]))  \otimes_A \det(g_2(A[[t]]) \mid g_3(A[[t]])) \xrightarrow{\sim} \det(g_1(A[[t]]) \mid g_3(A[[t]]))
\end{equation}
that satisfies the associativity diagram for any four elements from $\G(A)$.
This gives also the following canonical isomorphisms of $A$-modules
\begin{gather*}
\det(g_1(A[[t]]) \mid g_1(A[[t]])) \xrightarrow{\sim} A  \, \mbox{,}  \\
\det(g_1(A[[t]]) \mid g_2(A[[t]]))  \otimes_A \det(g_2(A[[t]]) \mid g_1(A[[t]])
 \xrightarrow{\sim}
  A  \, \mbox{.}
\end{gather*}
Besides, any element $f \in \G(A)$
  defines an isomorphism of $A$-modules
\begin{equation} \label{mult-isom}
\det(g_1(A[[t]]) \mid g_2(A[[t]])) \xrightarrow{\sim} \det(fg_1(A[[t]]) \mid fg_2(A[[t]]) \mbox{.}
\end{equation}

 \begin{lemma}  \label{det-free}
 For any elements $g_1, g_2 \in \G(A)$ the $A$-module $\det(g_1(A[[t]]) \mid g_2(A[[t]]))$
is free of rank $1$.
 \end{lemma}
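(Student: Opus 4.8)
The plan is to reduce the statement to the one-argument modules $\det(A[[t]]\mid g(A[[t]]))$ for $g\in\G(A)$ (abbreviated $\det(e\mid g)$, where $e$ is the unit of $\G(A)$, so that $e(A[[t]])=A[[t]]$), to observe that the class of these modules in ${\rm Pic}(A)$ depends multiplicatively on $g$, and then to check triviality on a generating set of $\G(A)$, where the only genuinely nontrivial contributions come from the ``nilpotent'' subgroups $\vmi$ and $\AutLm$.

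First I would exploit the canonical isomorphisms \eqref{iso-tens} and \eqref{mult-isom}. Writing $\det(g_1\mid g_2)$ for $\det(g_1(A[[t]])\mid g_2(A[[t]]))$, applying \eqref{mult-isom} with $f=a^{-1}$ to $\det(a\mid ab)$ gives $\det(a\mid ab)\cong\det(e\mid b)$, and then \eqref{iso-tens} with middle term $e$ yields $\det(e\mid ab)\cong\det(e\mid a)\otimes_A\det(e\mid b)$. Hence $g\mapsto[\det(e\mid g)]$ is a homomorphism from $\G(A)$ to ${\rm Pic}(A)$. Moreover \eqref{iso-tens} with middle term $e$, together with the isomorphism $\det(g_1\mid e)\otimes_A\det(e\mid g_1)\cong A$, shows $\det(g_1\mid g_2)\cong\det(e\mid g_1)^{\otimes-1}\otimes_A\det(e\mid g_2)$; since the inverse and the tensor product of free rank-$1$ modules are again free of rank $1$, it suffices to prove that $\det(e\mid g)$ is free of rank $1$ for every $g$, and by the homomorphism property it is enough to do this for $g$ running through a generating set.

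Next I would produce such a generating set. By Lemma~\ref{dec-G} every $g\in\G(A)$ has the form $g_0\,t^{\underline n}$ with $g_0\in\G^0(A)$ and $\underline n\in\underline{\dz}(A)$, and by the decomposition used in the proof of Proposition~\ref{dir-sum-new} (coming from \eqref{decg0}, \eqref{declg}, \eqref{decaut}) each $g_0$ is a product of elements of $\GG_m(A)$, $\vpl(A)$, $\vmi(A)$, $\AutLp(A)$, $\AutLm(A)$. For the first, second and fourth of these subgroups the element preserves $A[[t]]$ (scalars and $1+tA[[t]]$-units are automorphisms of $A[[t]]$, and $\AutLp(A)$ consists of continuous automorphisms of $A[[t]]$), so $g(A[[t]])=A[[t]]$ and $\det(e\mid g)\cong A$. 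For $g=t^{\underline n}$ the module $A[[t]]/t^{\underline n}A[[t]]$ is, on each component of the decomposition $\Spec A=\coprod\Spec A_i$ attached to $\underline n$, free over $A_i$, so $\det(e\mid t^{\underline n})$ is free of rank $1$ over $A$. Thus everything reduces to the subgroups $\vmi$ and $\AutLm$.

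The heart of the argument is this remaining case, and here I would use that every $h\in\vmi(A)$ and every $\varphi\in\AutLm(A)$ is congruent to the identity modulo a nilpotent ideal: their defining series \eqref{eq-min}, \eqref{ami} involve only finitely many nonzero coefficients, all lying in $\Nil(A)$, so the finitely generated ideal $I\subset A$ they generate satisfies $I^k=0$ for some $k$. Formation of $\det(e\mid g)$ commutes with the base change $A\to A/I$, and since $\bar g={\rm id}$ in $\AutL(A/I)$ (respectively $\bar h=1$), one obtains $\det(e\mid g)\otimes_A A/I\cong\det((A/I)[[t]]\mid(A/I)[[t]])\cong A/I$, free of rank $1$. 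I would then invoke the lifting lemma for nilpotent ideals: a finitely generated projective module of rank $1$ over $A$ is free as soon as its reduction modulo the nilpotent ideal $I$ is free (lift a generator $x$ and remove $I$ by the Nakayama-type induction $P=Ax+IP=\ldots=Ax+I^kP=Ax$). Applied to the rank-$1$ projective module $\det(e\mid g)$ this yields freeness over $A$, completing the proof. I expect the main obstacle to be precisely the base-change step $\det(e\mid g)\otimes_A A/I\cong\det_{A/I}(e\mid\bar g)$: one must check that the finitely generated projective quotients $g(A[[t]])/t^lA[[t]]$ entering the definition of $\det$ are compatible with reduction modulo $I$, i.e. that $\bigl(g(A[[t]])/t^lA[[t]]\bigr)\otimes_A A/I\cong\bar g((A/I)[[t]])/t^l(A/I)[[t]]$, which uses flatness of these quotients and the identification of the image of $g(A[[t]])$ in $(A/I)((t))$ with $\bar g((A/I)[[t]])$, transparent precisely because $\bar g={\rm id}$.
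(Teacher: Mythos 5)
Your proof is correct, but it follows a genuinely different route from the paper's. The paper makes the same two reductions you do --- to $\det(A[[t]] \mid g(A[[t]]))$ via \eqref{mult-isom}, and splitting off the factor $t^{\underline{n}}$ via Lemma~\ref{dec-G} and \eqref{iso-tens} --- but for the remaining case $g \in \G^0(A)$ it does not argue by generators at all: it invokes Corollary~\ref{block} (bijectivity of $d_g$, i.e. the decomposition $A((t)) = t^{-1}A[t^{-1}] \oplus g(A[[t]])$ of Proposition~\ref{dir-sum-new}) together with \cite[Prop.~3.3]{O1}, which produces an explicit generator of the determinant module (the inverse of the isomorphism given by formula~\eqref{isom-det} with $r = d_g$). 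You instead observe that $g \mapsto [\det(A[[t]] \mid g(A[[t]]))]$ is a homomorphism to ${\rm Pic}(A)$, reduce to the factors of the decomposition $g = h_- h_+ \vp_- \vp_+$ (the same factorization the paper uses to prove Proposition~\ref{dir-sum-new}), dispose of the subgroups preserving $A[[t]]$ and of $t^{\underline{n}}$, and handle $\vmi(A)$ and $\AutLm(A)$ by reduction modulo the nilpotent ideal $I$ generated by their finitely many nilpotent coefficients, plus injectivity of ${\rm Pic}(A) \to {\rm Pic}(A/I)$ along a nilpotent thickening. This is sound, and the base-change compatibility you correctly single out as the crux is available: it is exactly what the paper establishes (for the purpose of functoriality of $\widetilde{\G}$) in the paragraph following the lemma, using \cite[Prop.~3.2]{O1} that $g(A[[t]])/t^l A[[t]]$ is a direct summand of a module $t^l A[[t]]/t^m A[[t]]$, for which reduction is evident. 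As for what each approach buys: yours is softer and more conceptual --- the nontrivial part of $\G$ is an infinitesimal thickening of the identity, and invertible modules do not deform along nilpotent thickenings --- while the paper's is constructive, exhibiting a canonical generator; that explicitness is not incidental, since the generator built from $d_g$ is what later underlies the section $\sigma$, the $2$-cocycle $D$ of formula~\eqref{coc-D-2}, and the identification of the two constructions of the determinant central extension.
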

 \begin{proof}
 By~\eqref{mult-isom} it is enough to suppose that, for example, $g_1 =1$ and $g_2=g$.
 { If an element $g \in \G^0(A)$, then this statement follows from Corollary~\ref{block} and~\cite[Prop.~3.3]{O1}} (see also below the inverse isomorphism to the isomorphism given by formula~\eqref{isom-det} for $r = d_g$). For any $g \in \G(A)$ we have
 $g = t^{\underline{n}} f$, where $\underline{n} \in \underline{\dz}(A)$ and $f \in \G^0(A)$ (see Lemma~\ref{dec-G}). Therefore the statement follows from  isomorphisms of $A$-modules
 \begin{multline*}
 \det(A[[t]] \mid g(A[[t]])) \simeq \det(A[[t]] \mid t^{\underline{n}}(A[[t]]))  \otimes_A   \det( t^{\underline{n}}(A[[t]])  \mid
 t^{\underline{n}} f (A[[t]])) \simeq  \\ \simeq
 A \otimes_A   \det( t^{\underline{n}}(A[[t]])  \mid
 t^{\underline{n}} f (A[[t]]))
 \simeq
  \det( A[[t]]  \mid
  f (A[[t]]))   \,  \mbox{.}
 \end{multline*}
 \end{proof}

 Using Lemma~\ref{det-free} we define a {\em determinant central extension}
\begin{equation}  \label{det-cent-ext'}
1 \lrto A^*  \lrto \widetilde{\G}(A)  \lrto \G(A)  \lrto  1  \, \mbox{,}
\end{equation}
where the group $\widetilde{\G}(A)$ consists of pairs $(g, s)$, where $g \in \G(A)$
and $s$ is an element of free $A$-module  $\det( g(A[[t]])  \mid  A[[t]])$ of rank $1$ such that
$s$ is a generator of the $A$-module  $\det( g(A[[t]])  \mid  A[[t]])$.
 The group law is as follows
$$
(g_1, s_1) (g_2, s_2)= (g_1 g_2, g_1(s_2) \otimes s_1)  \, \mbox{,}
$$
and we map $(g,s)  \in \widetilde{\G}(A)$ to $g \in \G(A)$ to obtain the central extension.

The correspondence $A \mapsto \widetilde{\G}(A)$ is functorial with respect to $A$  (compare with analogous fact for $\widetilde{\AutL(A)}$  in~\cite[\S~3.3]{O1}). This follows from the natural isomorphism
$$(g (A_1[[t]]) / h (A_1[[t]])) \otimes_{A_1} A_2   \xrightarrow{\sim}   u(g) (A_2[[t]]) / u(h) (A_2[[t]])  \mbox{,}$$
where ${u: A_1 \to A_2}$ is any homomorphism of commutative rings, which induces the homomorphism  ${\G(A_1) \to \G(A_2)}$ denoted by the same letter $u$, and elements $g, h$ are from
$ \G(A_1)$ such that $g (A_1[[t]]) \supset h (A_1[[t]])$. This isomorphism  is satisfied, since  by~\cite[Prop.~3.2]{O1}   the projective $ A_1$-module $g (A_1[[t]]) / h (A_1[[t]]))$ is a direct summand of an $A_1$-module $t^lA_1[[t]]/ t^mA_1[[t]]$ for appropriate $l, m \in \dz$, and for
the $A_1$-module $t^lA_1[[t]]/ t^mA_1[[t]]$ this isomorphism is evident.

Thus we obtain the group functor $\widetilde{\G}$ and  {\em  the determinant central extension} of $\G$ by $\GG_m$ from~\eqref{det-cent-ext'}.

\subsection{Uniqueness of an extension of the determinant central extension from $\G^0$ to $\G$}   \label{uniqueness}
We will prove that the determinant central extension of $\G$ by $\GG_m$  after restriction  to $\G^0$ coincides with the determinant central extension from Proposition~\ref{prop-det} and an extension  from $\G^0$ to $\G$ is unique. The last statement will follow from the statement that any central extension of $\G$ by $\GG_m$ is uniquely defined by its restriction to $\G^0$.

\begin{prop}  \label{mor-gr}
Any morphism of group functors $\G^0_{\Q}  \to {\GG_m}_{\Q}$ (i.~e. which preserves the group structures) is trivial.
Analogously, any morphism of group functors ${\G^0}  \to {\GG_m}$  is trivial.
\end{prop}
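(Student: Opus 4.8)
The plan is to show that any group homomorphism $\G^0 \to \GG_m$ must kill all the building blocks of $\G^0$ by exploiting the structure of $\G^0$ as a semidirect product and the specific shapes of the ind-schemes involved. I would prove the statement over a general base (the $\G^0 \to \GG_m$ case), since the $\Q$-version follows by restriction, but I will use $\tfrac12 \in A$ freely when convenient and then remove that hypothesis at the end. Recall that $\G^0 = (L\GG_m)^0 \rtimes \AutL$, and $(L\GG_m)^0 = \GG_m \times \vpl \times \vmi$. The key structural input is Lemma~\ref{commutant}, which after passing to Lie algebras tells us that $\Lie\G^0$ is perfect when $\tfrac12$ is invertible; a homomorphism to the abelian group functor $\GG_m$ ought to factor through the abelianization, which is forced to be small.

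The main steps, in order, are as follows. First I would handle the unipotent-type factors $\vpl$, $\vmi$, and $\AutLm$, together with the pro-unipotent part of $\AutLp$. Each of these is represented (see~\eqref{ind-formb}--\eqref{ind-formc}) by spectra of polynomial rings or truncations thereof, with the $\vmi$ and $\AutLm$ parts supported on nilpotents. A morphism of group functors $\vpl \to \GG_m$ is a morphism of ind-schemes $\Spec\dz[a_1,a_2,\dots] \to \Spec\dz[x,x^{-1}]$ respecting the group laws; since the additive-type coordinates $a_i$ lie in an ind-scheme whose only global invertible functions are units of $\dz$, any such morphism sends every $a_i$ to something that is forced to be $1$ by multiplicativity on the additive/unipotent group law. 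Concretely, for any commutative ring $A$ and any $a\in\Nil(A)$, the element $1+at\in\vpl(A)$ (or the analogous nilpotent element of $\vmi(A)$ or $\AutLm(A)$) is divisible in the group by $p$ for every prime, hence its image in $A^*$ is a root of unity of every prime-power order, so trivial; and for $\vpl$ one uses that $A^*$ has no nontrivial $A$-points vanishing at the origin to the required order. I expect this to be the main obstacle: one must argue carefully that the \emph{functorial} homomorphism is trivial, not merely trivial on each fixed ring, and the cleanest route is that $\Hom_{\text{gp}}(\GG_a\text{-type factor}, \GG_m)=0$ as group functors over $\dz$, which rests on the fact that a bimultiplicative-compatible morphism from a polynomial ind-scheme with additive group structure to $\GG_m$ must be constant.

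Second, I would treat the two remaining one-dimensional tori: the central $\GG_m$ inside $(L\GG_m)^0$ and the scaling coordinate $a_1 \in A^*$ in $\AutLp$. These are genuine copies of $\GG_m$, so a priori a homomorphism could be a power map $x \mapsto x^n$. To kill these, I would use the commutator relations: the scaling action of $a_1 \in \AutLp$ on the coordinates of $\vpl$ and the central $\GG_m$ produces commutators, and since the target $\GG_m$ is abelian any homomorphism annihilates all commutators. The key relation $1 = [\tfrac{\partial}{\partial t}, t]$ from the proof of Lemma~\ref{commutant}, lifted to the group level, realizes the central generator as a commutator in $\G^0$, forcing its image to be trivial; similarly the derivation-type element $t\tfrac{\partial}{\partial t}$ (the infinitesimal scaling) is a commutator by $\tfrac{\partial}{\partial t} = [\tfrac{\partial}{\partial t}, t\tfrac{\partial}{\partial t}]$, which pins down the $a_1$-torus. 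Assembling: every factor in the decomposition $\G^0 = \GG_m \times \vpl \times \vmi \rtimes \AutL$ maps trivially, so by multiplicativity the whole morphism is trivial. Finally I would note that the argument over $\Q$ only needs Lemma~\ref{commutant} (hence $\tfrac12\in A$) for the commutator step, while the unipotent and torus steps go through over $\dz$ directly, which yields the un-primed statement as well.
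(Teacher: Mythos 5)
Your factor-by-factor strategy breaks down at its very first step, because the claim it rests on is false: the factor $\vmi$ \emph{does} admit nontrivial morphisms of group functors to ${\GG_m}_{\Q}$. Write $f = 1 + \sum_{l<0} a_l t^l \in \vmi(A)$ for a commutative $\Q$-algebra $A$. Since all exponents involved are negative, the coefficient of $t^{-1}$ in a product of two such elements is just the sum of their $t^{-1}$-coefficients (cross terms land in degrees $\le -2$), so the assignment
$$
f \longmapsto \exp\bigl(a_{-1}\bigr) \, \in \, 1 + \Nil(A) \subset \GG_m(A)
$$
is a functorial group homomorphism $\vmi_{\Q} \to {\GG_m}_{\Q}$, and it is nontrivial: over $\Q[\ve]/(\ve^2)$ it sends $1 + \ve t^{-1}$ to $1 + \ve$. (A similar construction works for $\AutLm$.) Hence triviality of $\beta$ on $\vmi$ can never be established by looking at that factor alone; it only follows from the interaction of $\vmi$ with the rest of $\G^0$. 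Your concrete argument for this step is also internally incorrect: if $g = h^p$ then $\beta(g) = \beta(h)^p$ is a $p$-th \emph{power} in $A^*$, not a root of unity, and being a $p^n$-th power for all $n$ and $p$ does not force $\beta(g)=1$; moreover such divisibility already fails in $\vpl(\dz)$, so this step cannot run over $\dz$ at all.

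The second fatal gap is the plan to lift the Lie identities $1 = [\frac{\partial}{\partial t}, t]$ and $\frac{\partial}{\partial t} = [\frac{\partial}{\partial t}, t\frac{\partial}{\partial t}]$ to group-level commutator identities. This is impossible over reduced rings: the group elements corresponding to $\frac{\partial}{\partial t}$, namely $t \mapsto t + \ve$, exist only for nilpotent $\ve$, and in fact the central $\GG_m(\Q)=\Q^*$ is \emph{not} contained in the commutator subgroup of $\G^0(\Q)$ --- the constant-term map $(h,\vp) \mapsto h(0)$ is an abstract group homomorphism $\G^0(\Q) \to \Q^*$ (multiplicative because over a reduced ring $\widetilde{\vp}$ has zero constant term), and it is nontrivial on the center. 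So the proposition is irreducibly a statement about \emph{functoriality}: over any fixed reduced ring nontrivial homomorphisms to $\GG_m$ exist, and no ring-by-ring group-theoretic identity can prove it. (Your $\dz$/$\Q$ reduction is also backwards: a morphism $\G^0_{\Q} \to {\GG_m}_{\Q}$ need not extend over $\dz$, so the $\Q$-statement does not follow from the $\dz$-statement; the paper instead uses ind-flatness of $\G^0$ to deduce the $\dz$-statement from the $\Q$-one.) What the paper actually does, and what your proposal is missing, is to use Lemma~\ref{commutant} only infinitesimally: perfectness of $\Lie \G^0(A)$ forces $(d\beta)_e = 0$, right translation forces $(d\beta)_z = 0$ at every point $z$, and then the explicit description of $\oo(\G^0_{\Q})$ as a power-series/polynomial/Laurent-polynomial algebra over $\Q$ shows that a regular function all of whose partial derivatives vanish at all points of all $\Q$-algebras is a constant, which group-compatibility forces to equal $1$. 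This passage from vanishing differentials to constancy of the morphism is the key step that no combination of your steps replaces.
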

\begin{proof}
Let $\beta$ be a morphism of group functors $\G^0  \lrto \GG_m$.
 The functor $\GG_m$ is represented by the affine  scheme, and  the  functor $\G^0$  is represented by an ind-affine ind-scheme that is ind-flat.
  Therefore to prove that $\beta$ is trivial,
  it is enough to prove that the morphism $\beta_{\Q} : \G^0_{\Q}  \lrto {\GG_m}_{\Q}$ (i.~e. after restrictions of functors to the category of  commutative $\Q$-algebras) is trivial, since these morphisms are determined by the corresponding homomorphisms of rings of regular functions of schemes and ind-schemes.

  So, we will prove the first statement of the proposition and suppose that we have a morphism of group functors $\beta : \G^0_{\Q}  \to {\GG_m}_{\Q} $.

Let $A$ be any commutative  $\Q$-algebra.

We claim that for any $z  \in \G^0(A)$  the differential $(d \beta)_z$ of $\beta$ is zero as the map from the tangent $A$-module $T_z \G^0 (A)$ of $\G^0$ at $z$ to the tangent $A$-module $T_{\beta(z)} {\GG_m}(A)$, where
${
T_z \G^0 (A) = \rho^{-1}(z)
}$
and ${\rho \, :  \, \G^0(A[\ve]/ (\ve^2))  \to G^0(A) }$ is the natural map, and  the tangent space
$$T_{\beta(z)} {\GG_m}(A)= \beta(z) + A \ve  \subset (A[\ve] / (\ve^2))^*$$
is naturally identified with $A$.

Indeed, if $z$ is the identity element $e$ of the group $\G^0(A)$, then $T_e \G^0 (A) = \Lie \G^0 (A)$ and $(d \beta)_e$ is the homomorphism from
the Lie $A$-algebra $\Lie \G^0 (A)$ to the Lie algebra $\Lie \GG_m(A)$.  But by Lemma~\ref{commutant}, we have ${\left[\Lie \G^0 (A), \Lie \G^0 (A) \right] =
\Lie \G^0 (A)}$,   and $\Lie \GG_m(A)  \simeq A$ is an Abelian Lie algebra. Therefore  $(d \beta)_e = 0$.

For any $z \in \G^0(A)$ we have $(d \beta)_z = 0$, since $\beta$ is the morphism of group functors and
 therefore the following diagram is commutative
 $$
 {\xymatrix{
   {T_e \G^0(A)} \ar[d]_{(d \beta)_e}  \ar[rr]^{(d r_z)_e} &&  {T_z \G^0(A)} \ar[d]^{(d \beta)_z}  \\
   {T_{1} {\GG_m}(A)} \ar[rr]_{(d r_{\beta(z)})_1}   && {T_{\beta(z)} {\GG_m}(A)}
 }}
 $$
 where $r_z$ or $r_{\beta(z)}$ is the multiplication on the right by the element $z$ or $\beta(z)$,
 and the horizontal arrows are isomorphisms.

 According to Section~\ref{ex-ind-sch}, the functor $\G^0_{\Q}$ is represented by an ind-scheme  \linebreak  $\mbox{``$\varinjlim\limits_{i \in I}$''} \Spec C_i$ for concrete $\Q$-algebras $C_i$. Therefore the morphism $\beta$ is uniquely defined by the homomorphism of $\Q$-algebras
 $$\beta^*   \; : \;   \Q[x,x^{-1}]  \lrto \oo(\G^0_{\Q}) =  \varprojlim\limits_{i \in I} C_i   \, \mbox{,}$$
which is uniquely defined by the image $F = \beta^* (x)$ in $\oo(\G^0_{\Q})$.

By~\eqref{decg0},  \eqref{declg}, \eqref{decaut}, this ind-scheme is the product of another ind-schemes written explicitly as some ind-schemes
from~\eqref{ind-formb}-\eqref{ind-formc} after extension of scalars to $\Q$. Thus, the $\Q$-algebra $\oo(\G^0_{\Q})$
is the algebra of formal power series in infinite (countable) number of variables with coefficients in the $\Q$-algebra of polynomials in infinite (countable) number of variables with coefficients in the $\Q$-algebra of Laurent polynomials in two variables. A formal power series from this algebra is an infinite sum of monomials, where every monomial is the product of a finite number of variables in powers, and the convergence  condition for this infinite sum  comes from the description of ideals $I_{\{ \epsilon_i \}}$ in Section~\ref{ex-ind-sch}.

We recall that  for any commutative $\Q$-algebra $A$ and  any $z \in \G^0(A)$ we have ${(d \beta)_z = 0}$. Therefore  from explicit description of the tangent $A$-module $T_z \G^0(A)$ as an inductive limit of  $A$-modules $A^{\mathbb{N}}$ and
by the Taylor formula (which is enough to see for any monomial)
$$
F (z +   \ve_c) = F(z) + \left(\frac{\partial F}{ \partial c}  \right) (z) \ve_c  \, \mbox{,} \qquad \ve_c^2 =0 \, \mbox{,} \qquad \ve_c  \quad \mbox{is placed in $c$th coordinate, }
$$
we obtain that the series $\frac{\partial F}{ \partial c}$ obtained as the partial derivative of the series $F$ with respect to any variable $c$ (including variables from the algebra of polynomials and the algebra of Laurent polynomials) applied to the element $z$ equals zero.

But if a series applied to any element from $\G^0(A)$ for any commutative $\Q$-algebra $A$ equals zero, then this series is zero itself. Indeed,
it is enough to take algebras of the following forms
$$A = \Q[x_1, x_2, x_1^{-1}, x_2^{-1}]  \otimes_{\Q} \Q[y_1, y_2, \ldots ] \otimes_{\Q} \left( \Q[z_1, \ldots, z_l]/ (z_1^{n_1}, \ldots, z_l^{n_l})
\right) $$ and put $x_i$, $y_j$, $z_k$  and zero instead the variables in the series $F$ (including variables from the algebra of polynomials and the algebra of Laurent polynomials).

Thus, we obtained that $\frac{\partial F}{ \partial c} =0$ with respect to any variable $c$.
Hence $F = {\rm const}  \in \Q $.

Since, the morphism $\beta$ preserves the group structure, this constant equals $1$, i.e. $F =1$ and $\beta$ is trivial.

\end{proof}

\begin{Th}  \label{th1}
Any central extension of the group functor $\G$ by the group functor $\GG_m$
is uniquely (up to isomorphism) defined by its restriction to the group subfunctor $\G^0$.
The same is also true after restrictions of all these functors to the category of commutative $\Q$-algebras.
\end{Th}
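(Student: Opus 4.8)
The plan is to restate the theorem cohomologically and reduce it to an injectivity. By the correspondence between equivalence classes of central extensions (admitting a section as functors) and classes in $H^2$ recalled above, the assertion is equivalent to the injectivity of the restriction homomorphism
$$\mathrm{res}\colon H^2(\G,\GG_m)\lrto H^2(\G^0,\GG_m),$$
and likewise for the ${}_{\Q}$-versions. Concretely I would show that any central extension $1\to\GG_m\to\widetilde{\G}\to\G\to1$ whose restriction to $\G^0$ is split (as group functors) is already split over all of $\G$; applied to the difference of two extensions with isomorphic restrictions, this yields the stated uniqueness. The entire argument rests on the semidirect decomposition $\G=\G^0\rtimes\underline{\dz}$ of Lemma~\ref{dec-G}, which lets me treat the ``$\G^0$-direction'' and the ``$\underline{\dz}$-direction'' separately and then glue.

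First I would fix a group-functor section $s_0\colon\G^0\to\widetilde{\G}$ coming from the assumed splitting over $\G^0$. Next I would split the extension over the complementary subfunctor $\underline{\dz}\hookrightarrow\G$. For this I need $H^2(\underline{\dz},\GG_m)=0$: by the ind-scheme description of $\underline{\dz}$ from Section~\ref{ex-ind-sch} as a filtered colimit of finite disjoint unions of copies of $\Spec\dz$, a morphism of functors $\underline{\dz}^{\times k}\to\GG_m$ is the same as an arbitrary set map $\dz^{k}\to\dz^{*}$ (resp.\ $\dz^{k}\to\Q^{*}$ in the rational case), so the cochain complex $C^{\bullet}(\underline{\dz},\GG_m)$ coincides termwise with the bar complex computing ordinary group cohomology of the free abelian group $\dz$ with trivial coefficients $\dz^{*}$ (resp.\ $\Q^{*}$); since $\dz$ has cohomological dimension~$1$, this vanishes in all degrees $\ge 2$. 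Hence there is a group-functor section $s_1\colon\underline{\dz}\to\widetilde{\G}$.

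It then remains to glue $s_0$ and $s_1$ into a homomorphism $\G=\G^0\rtimes\underline{\dz}\to\widetilde{\G}$. The only point to verify is the conjugation compatibility $s_1(\underline{n})\,s_0(g)\,s_1(\underline{n})^{-1}=s_0(\underline{n}\,g\,\underline{n}^{-1})$ for $g\in\G^0(A)$ and $\underline{n}\in\underline{\dz}(A)$; both sides are group-functor homomorphisms in $g$ lifting the same automorphism of the normal subfunctor $\G^0$, and since $\GG_m$ is central in $\widetilde{\G}$ their ratio defines, for each fixed $\underline{n}$, a morphism of group functors $\theta(\underline{n},-)\colon\G^0\to\GG_m$. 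This is exactly where Proposition~\ref{mor-gr} enters: every such morphism is trivial, so $\theta\equiv1$, the compatibility holds, and $s(g\cdot t^{\underline{n}})=s_0(g)\,s_1(\underline{n})$ is a well-defined splitting of $\widetilde{\G}$ over $\G$. The $\Q$-statement is proved by the identical argument using the ${}_{\Q}$-versions of Lemma~\ref{dec-G} and Proposition~\ref{mor-gr}. I expect the gluing to be the main obstacle: the vanishing $H^2(\underline{\dz},\GG_m)=0$ is elementary, whereas controlling the conjugation defect $\theta$ crucially requires the vanishing of $\Hom$ from Proposition~\ref{mor-gr}, which itself rested on the perfectness $[\Lie\G^0,\Lie\G^0]=\Lie\G^0$ from Lemma~\ref{commutant}.
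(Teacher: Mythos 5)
Your proposal follows essentially the same route as the paper's proof: the decomposition $\G=\G^0\rtimes\underline{\dz}$ of Lemma~\ref{dec-G}, a group-functor splitting over $\underline{\dz}$, and Proposition~\ref{mor-gr} as the decisive input. The packaging differs only cosmetically: the paper invokes the Brylinski--Deligne-type description of a central extension of a semidirect product by three data (an extension of $\underline{\dz}$, an extension of $\G^0$, and a lifted action of $\underline{\dz}$ on the latter), proves the first is always trivial and the third is unique, whereas you phrase it as ``split over $\G^0$ implies split over $\G$'' plus Baer differences; likewise your bar-complex computation of $H^2(\underline{\dz},\GG_m)$ is just the cohomological form of the paper's direct splitting $1\mapsto a$, $n\mapsto a^n$ coming from freeness of $\dz$. (Two small points: the opening reformulation as injectivity of $H^2(\G,\GG_m)\to H^2(\G^0,\GG_m)$ silently assumes every central extension admits a section as functors, but your concrete Baer-difference argument does not need this; and before quoting $H^2(\underline{\dz},\GG_m)=0$ one should note that the restricted extension does admit a functor-section --- it does, since preimages in $\widetilde{\G}(\dz)$ of the points $t^n$, $n\in\dz$, exist by value-wise surjectivity.)

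There is, however, one step where your argument outruns what Proposition~\ref{mor-gr} actually provides. You apply it to the conjugation defect $\theta(\underline{n},-)$ ``for each fixed $\underline{n}$'', where $\underline{n}$ ranges over $\underline{\dz}(A)$ for an \emph{arbitrary} commutative ring $A$. For such an $\underline{n}$ the defect is a morphism of group functors $\G^0_A\to{\GG_m}_A$ over the base $A$, while Proposition~\ref{mor-gr} is stated and proved only over $\dz$ and over $\Q$: its proof reduces the $\dz$-case to the $\Q$-case using ind-flatness of $\G^0$ over $\dz$, a reduction unavailable over a base with torsion, so triviality of homomorphisms $\G^0_A\to{\GG_m}_A$ is simply not known in that generality. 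The repair is exactly the reduction the paper performs: verify the compatibility only for the universal element $1\in\underline{\dz}(\dz)$, for which conjugation by $t$ is defined over $\dz$, so that $\theta(1,-)\colon\G^0\to\GG_m$ is a morphism over $\dz$ and Proposition~\ref{mor-gr} applies; then deduce it for all $n\in\dz$ from multiplicativity of $s_1$, and for general $\underline{n}\in\underline{\dz}(A)$ by decomposing $A=A_1\times\cdots\times A_l$ so that $\underline{n}$ is a constant integer $n_i$ on each factor, using $\widetilde{\G}(A)\simeq\widetilde{\G}(A_1)\times\cdots\times\widetilde{\G}(A_l)$ (five lemma applied to the value-wise exact sequences) and functoriality along $\dz\to A_i$. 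With this repair your proof is complete and coincides in substance with the paper's.
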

\begin{proof}
By Lemma~\ref{dec-G}, we have $\G = \G^0  \rtimes \underline{\dz}$.

Similarly to~\cite[1.7. Construction]{BD},   a central extension $\tilde{C}$ of a group functor \linebreak ${C = G \rtimes H}$
by  a group functor $E$ is equivalent to the following data:
\begin{itemize}
\item[1)]   a central extension of $H$ by $E$;
\item[2)]   a central extension $\tilde{G}$ of $G$ by $E$;
\item[3)] an action of $H$ on the group functor $\tilde{G}$, lifting the action of $H$ on $G$ and  trivial on $E$.
\end{itemize}

In our case any central extension  $\tilde{\underline{\dz}}$ of $\underline{\dz}$ is trivial, since there is a group functor splitting
$\underline{\dz} \to \tilde{\underline{\dz}}$.
This splitting is given by any map
\begin{equation} \label{splitt}
\underline{\dz}  (\dz) = \dz \ni 1  \longmapsto a \in\tilde{\underline{\dz}}(\dz)
\end{equation}
that maps $1$ to a preimage $a$ of $1$ in  $\tilde{\underline{\dz}}(\dz)$.
Now for any commutative ring $A$ and any function $\underline{n} \in \underline{\dz}(A)$ we have $A = A_1 \times \ldots \times A_l$ such that
the function $\underline{n}$ restricted to any $\Spec A_i$ is the constant function with value $n_i \in \dz$.
Besides, we have ${\tilde{\underline{\dz}}(A) = \tilde{\underline{\dz}}(A_1) \times \ldots \times \tilde{\underline{\dz}}(A_l)}$.
Now $n_i \in \underline{\dz}(A_i)$
is the image of $n_i \in \underline{\dz}  (\dz) = \dz$. Therefore  the splitting
$$ \underline{\dz}(A_i) \ni n_i \longmapsto a^{n_i}  \in \tilde{\underline{\dz}}(A_i) $$
follows from
formula~\eqref{splitt}.

We claim uniqueness of the   action of $1 \in \underline{\dz}  (\dz)$ on  the group functor that gives a central extension
of $\G^0$ by $\GG_m$ such that this action  lifts the action of $1 \in \underline{\dz}  (\dz)$ on $\G^0$ and is  trivial on $\GG_m$. Indeed,
if there are at least two such actions, then they differ by the automorphism of the central extension (this automorphism induces identically action on $\GG_m$ and $\G^0$). But such  automorphism can be identified with the morphism of group functor $\G^0 \to \GG_m$. By Proposition~\ref{mor-gr}
this morphism is trivial.

Now for any commutative ring $A$, by similar reasonings as above,  the action of ${1 \in \underline{\dz}  (\dz)}$ induces the action of $\underline{n} \in \underline{\dz}(A)$ on the group which defines the central extension after specifying the ring $A$. Therefore the action of the group functor $\underline{\dz}$ is unique.

The statement about the restricted to commutative  $\Q$-algebras functors is proved by the same reasonings with the help of Proposition~\ref{mor-gr}.
\end{proof}

\begin{Th}
The determinant central extension from Section~\ref{centr-det-g}
\begin{equation}  \label{det-big}
1 \lrto \GG_m  \lrto \widetilde{\G}  \lrto \G \lrto 1
\end{equation}
is a unique (up to isomorphism) central extension of $\G$ by $\GG_m$ such that its restriction to $\G^0$ coincides with the central extension from Proposition~\ref{prop-det}. (The same is also true after restriction to the category of commutative $\Q$-algebras.)
\end{Th}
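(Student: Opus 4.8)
The uniqueness assertion is immediate from Theorem~\ref{th1}: any central extension of $\G$ by $\GG_m$ is determined up to isomorphism by its restriction to $\G^0$, so it suffices to exhibit \emph{one} extension restricting to the extension of Proposition~\ref{prop-det}, and the content of the theorem is that the determinant central extension~\eqref{det-big} is this extension. The plan is therefore to compare the restriction of~\eqref{det-big} to $\G^0$ with the extension of Proposition~\ref{prop-det} by exhibiting compatible sections and checking that the associated $2$-cocycles coincide.

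First I would fix a natural section of~\eqref{det-big} over $\G^0$. For $g \in \G^0(A)$, Proposition~\ref{dir-sum-new} gives $A((t)) = t^{-1}A[t^{-1}] \oplus g(A[[t]])$, and since this decomposition shares the summand $t^{-1}A[t^{-1}]$ with $A((t)) = t^{-1}A[t^{-1}] \oplus A[[t]]$, the projection $\mathrm{pr}\colon A((t)) \to A[[t]]$ restricts to an isomorphism $\pi_g\colon g(A[[t]]) \xrightarrow{\sim} A[[t]]$ which is the identity on $t^l A[[t]]$ for $l \gg 0$. Hence $\pi_g$ induces a generator $s_g$ of the free rank-one module $\det(g(A[[t]]) \mid A[[t]])$, and $g \mapsto (g, s_g)$ is a functorial section $\tau$ of~\eqref{det-big} over $\G^0$. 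By construction $\pi_g \circ g|_{A[[t]]} = \mathrm{pr}\circ g|_{A[[t]]} = d_g$ in the notation of~\eqref{matr'}, so this section is the analogue of the section $\sigma\colon g \mapsto (g, d_g)$ used in Proposition~\ref{prop-det}.

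The main step is to compute the $2$-cocycle of~\eqref{det-big} in the section $\tau$, i.e. the element $c(g_1, g_2) \in A^*$ determined by $g_1(s_{g_2}) \otimes s_{g_1} = c(g_1, g_2)\, s_{g_1 g_2}$, and to show it equals $D(g_1, g_2) = \det(d_{g_1} d_{g_2} d_{g_1 g_2}^{-1})$ from~\eqref{coc-D-2}. Tracing through the isomorphisms~\eqref{mult-isom} and~\eqref{iso-tens}, the element $g_1(s_{g_2})$ corresponds to the isomorphism $g_1 \circ \pi_{g_2} \circ g_1^{-1}\colon g_1 g_2(A[[t]]) \to g_1(A[[t]])$ and $s_{g_1}$ to $\pi_{g_1} = \mathrm{pr}|_{g_1(A[[t]])}$, so that $g_1(s_{g_2}) \otimes s_{g_1}$ corresponds to the composite $\mathrm{pr}\circ g_1 \circ \pi_{g_2} \circ g_1^{-1}$ restricted to $g_1 g_2(A[[t]])$, while $s_{g_1 g_2}$ corresponds to $\pi_{g_1 g_2}$. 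The ratio is then $\det(u)$, where $u \in \GL_A(A[[t]])$ is the automorphism $u = (\mathrm{pr}\circ g_1 \circ \pi_{g_2} \circ g_1^{-1}) \circ \pi_{g_1 g_2}^{-1}$. Using $\mathrm{pr}\circ g_1|_{A[[t]]} = d_{g_1}$, the identity $g_1^{-1}\circ (g_1 g_2)|_{A[[t]]} = g_2|_{A[[t]]}$, and the relations $\pi_{g_2}\circ g_2|_{A[[t]]} = d_{g_2}$ and $\pi_{g_1 g_2}^{-1} = (g_1 g_2)|_{A[[t]]} \circ d_{g_1 g_2}^{-1}$, this simplifies to $u = d_{g_1} d_{g_2} d_{g_1 g_2}^{-1}$, whence $c(g_1, g_2) = \det(d_{g_1} d_{g_2} d_{g_1 g_2}^{-1}) = D(g_1, g_2)$. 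Since these cocycles agree functorially in $A$, the restriction of~\eqref{det-big} to $\G^0$ coincides with the extension of Proposition~\ref{prop-det}, and the same argument applies verbatim after restriction to commutative $\Q$-algebras.

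I expect the main obstacle to be the careful bookkeeping in this determinant computation: correctly identifying which isomorphisms of rank-one determinant modules are induced by~\eqref{mult-isom} and~\eqref{iso-tens}, verifying that all relevant maps are the identity modulo $t^l A[[t]]$ so that the determinants are well defined (as in Lemma~\ref{lem-det} and the remark following~\eqref{coc-D-2}), and tracking the composition so that the product $d_{g_1}d_{g_2}d_{g_1 g_2}^{-1}$ emerges rather than its inverse.
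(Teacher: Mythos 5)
Your proposal is correct and takes essentially the same approach as the paper: uniqueness is delegated to Theorem~\ref{th1}, and the comparison over $\G^0$ rests on the same key identification, namely that $\mathrm{pr}|_{g(A[[t]])} = d_g \circ g^{-1}$ induces a canonical generator of $\det(g(A[[t]]) \mid A[[t]])$. The paper packages this as a homomorphism $(g,r) \mapsto (g,s)$ from the extension~\eqref{GLf} to $\widetilde{\G}(A)$ restricting to $\det$ on $\GL_f(A)$, and your explicit cocycle computation $u = d_{g_1} d_{g_2} d_{g_1 g_2}^{-1}$, giving $c = D$, is precisely the group-homomorphism property that the paper asserts without detailed verification.
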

\begin{proof}
By Theorem~\ref{th1} it is enough to prove that the determinant central extension from Section~\ref{centr-det-g}
restricted to  $\G^0$ is isomorphic to the central extension from Proposition~\ref{prop-det}.

Let $A$ be a commutative ring. We construct a homomorphism from exact sequence of groups~\eqref{GLf}
to the central extension~\eqref{det-cent-ext'}  restricted to $\G^0(A)$.

Let $(g, r)  \in \widehat{\G^0}(A)$. Then there is an $A$-submodule $L =t^m A[[t]]  $ with $m \ge 0$
such that  $g(L)  \subset A[[t]]$ and  $r |_L = d_g |_L$  (we recall notation from~\eqref{matr'}).
Since $g(L) \subset A[[t]]$, we have $d_g |_L = g |_L$.  Therefore $r(L)= g(L)$. Then we have the following isomorphism
\begin{equation}  \label{isom-det}
\frac{A[[t]]}{g(L)}= \frac{A[[t]]}{r(L)}  \stackrel{r}{\longleftarrow}  \frac{A[[t]]}{L}
\stackrel{g^{-1}}{\longleftarrow} \frac{g(A[[t]])}{g(L)}
\end{equation}

By~\cite[\S~3.2]{O1}, all the above $A$-modules are projective. Therefore taking some ${N =t^k A[[t]] \subset g(L)}$, we obtain the isomorphism of $A$-modules
$g(A[[t]])/ N \to A[[t]]/N$, which gives the isomorphism of the corresponding top exterior powers of these projective $A$-modules. This gives an element  $ s $ from the $A$-module $\det(g (A[[t]] | A[[t]])$, and this element generates this $A$-module.
 Thus we have mapped the element
$(g, r)  \in  \widehat{\G^0}(A)$ to   the element $ (g,s) \in \widetilde{\G}(A)$. This map  is a group homomorphism. Besides, this map restricted to the subgroup  $\GL_f(A)$ is the determinant homomorphism $\det$.

\end{proof}

\begin{nt} \em
From the proof of Theorem~\ref{th1} it follows that to extend the determinant central extension from $\G^0$ to $\G$ it is enough  to  lift the action of $1 \in \underline{\dz}  (\dz)$ on $\G^0$
to $\widetilde{{\G^0}}$
such that it will be  trivial on $\GG_m$. It is possible also to introduce this extension through the groups
$\GL_f(A)$  and $\widehat{\G^0}(A)$ introduced in Section~\ref{constr-det-g0} (this is similar to the smooth case from~\cite[\S~6.6]{PS}).
Indeed, let $A$ be any commutative ring.  The action of $1 \in \underline{\dz}  (\dz)$ on $A((t))$
is the multiplication by the element $t$, and the action on $\G^0 (A)$ is by means of the conjugation by $t$.  Now we consider the following map on $\widehat{\G^0}(A)$:  $(g,r)  \mapsto (t g t^{-1}, r_t )$, where $r_t \, :  \, A[[t]]  \to A[[t]]$ equals to ${\rm id}  \oplus t r t^{-1} $ for the decomposition of $A$-module $A[[t]]= A \oplus t A[[t]]$. This map is an injective endomorphism of the group $\widehat{G^0}(A)$, but this endomorphism is not surjective. But it is easy to see that the induced endomorphism of the group $ \widetilde{{\G^0}}(A)$ is the automorphism. This automorphism is the lift of the action of  $1 \in \underline{\dz}  (\dz)$ to the group $\widetilde{{\G^0}}(A)$.
\end{nt}

\begin{nt} \label{can-sec} \em
We note that determinant central extension~\eqref{det-big} has a natural section $ { \vsgm : \G  \to \widetilde{\G}}$ as functors (not as group functors),
since $\G = \G^0  \rtimes \underline{\dz}$, and the central extension has a canonical section $\sigma$ over $\G^0$ by Proposition~\ref{prop-det}  and a group section $\varrho$ over $\underline{\dz}$ as
$$t \longmapsto (t,1)  \, \mbox{,} \qquad \mbox{where} \quad 1 \in \det (tA[[t]] \mid  A[[t]]) \simeq A[[t]] /  tA[[t]] \simeq A  \, \mbox{.}$$
Now for any $g \in \G^0(A)$, $\underline{n} \in \underline{\dz}(A)$ we put
$$
\vsgm ((g, t^{\underline{n}} )) = \sigma(g) \varrho(t^{\underline{n}})  \, \mbox{.}
$$
The corresponding $2$-cocycle on $\G$ with coefficients in $\GG_m$ coincides with the  {$2$-cocycle} $D$ after restriction  to  $\G^0 $.
We denote the $2$-cocycle on $\G$  by the same letter~$D$.
\end{nt}

By Remark~\ref{can-sec}  we have a non-group section of determinant central extension. Therefore   we have a decomposition of  functors (not as group functors)
$$
\widetilde{\G} \simeq  \GG_m \times \G \, \mbox{.}
$$

Since functors $\GG_m$ and $\G$ are represented by ind-affine ind-schemes, we obtain that functor $\widetilde{\G}$ is also represented by an ind-affine ind-scheme.

\section{Geometric action of $\G$ and of $\widetilde{\G}$}

\label{geom-act}

We will construct a natural  action of $\G$ on the moduli stack   of some geometric data (including proper families of  curves with some conditions and invertible sheaves on these families) and a natural action of $\widetilde{\G}$ on the   determinant line bundle on this moduli stack.

During this section $A$ is any commutative ring, if we do not specify the concrete ring.

\subsection{Quintets and proper quintets}   \label{quint}

Let $C$ be a  separated family of curves over $A$, i.e. we have a  separated morphism  ${C \to \Spec A}$ whose fibres are one-dimensional schemes. We consider  an $A$-point ${p \in C(A)}$
such that $C$ is smooth near $p$, i.e. there is an open $V \supset p$ such that $V$ is smooth over $\Spec A$ (we will denote by the same letter an $A$-point and its image in $C$).  Since the morphism ${C \to \Spec A}$ is separated,
$p$ is a closed subscheme of $C$. Besides,
by~\cite[Theorem 17.12.1]{EGA4}   $p$  is an effective  Cartier divisor on $C$.

\begin{nt}  \label{Cartier_smooth} \em
 There is also a statement in the opposite direction.  By~\cite[Theorem 17.12.1]{EGA4} if an $A$-point $q \in C(A)$ defines the Cartier divisor on $C$ and the morphism ${C \to \Spec A}$ is locally of finite presentation near $q$, then the morphism ${C \to \Spec A}$ is smooth near~$q$.
\end{nt}

We consider the $A$-algebra
$$
\hat{\oo}_{C, p} = \varprojlim_{n > 0}  H^0(C, \oo_C / \oo_C( - np))   \, \mbox{,}
$$
which is the $A$-algebra of functions on the formal neighbourhood of $C$ at $p$. It is clear that $\hat{\oo}_{C, p}$ has a natural topology that makes  $\hat{\oo}_{C, p}$ into a topological $A$-algebra with the discrete topology on $A$.   We consider also the topological $A$-algebra
$$
{\mathcal K}_{C,p} =   \varinjlim_{m  > 0}  \varprojlim_{n > 0}  H^0(C, \oo_C (mp) / \oo_C( - np))  \,  \mbox{,}
$$
which is the $A$-algebra of functions on the punctured formal neighbourhood of $C$ at $p$.

Let $\ff$ be a  sheaf of $\oo_C$-modules  such that $\ff$ is an invertible sheaf near $p$. We consider the $\hat{\oo}_{C,p}$-module
$$
\hat{\ff}_{C, p} = \varprojlim_{n > 0}  H^0(C, \ff / \ff( - np)) \, \mbox{,}
$$
which is the module of sections of $\ff$ over the formal neighbourhood of $C$ at $p$.

\begin{defin}  \label{def-quint}
By a quintet over $A$ we call a collection $(C, p, \ff, t, e)$, where
\begin{itemize}
\item $C$ is a  separated family of curves over $A$,
\item
 $p \in C(A)$ is an $A$-point of $C$
such that $C$ is smooth near $p$,
\item$\ff$ a  sheaf of $\oo_C$-modules such that $\ff$  is an invertible sheaf near $p$,
\item $t$ is a relative formal parameter at $p$, i.e.
$t$ is an element of ideal of $p$ of $\hat{\oo}_{C, p}$ that induces an isomorphism of topological $A$-algebras
\begin{equation}  \label{complt}
A [[t]]     \xrightarrow{\sim}  \hat{\oo}_{C, p}  \, \mbox{,}
\end{equation}
\item $e$ is a formal trivialization of $\ff$ at $p$, i.e.  $\hat{\ff}_{C, p}$ is a  free   $\hat{\oo}_{C, p}$-module of rank $1$, and  $e$ is its  basis.
\end{itemize}

A quintet  $(C, p, \ff, t, e)$ is called a proper quintet if, in addition, the natural morphism $C  \to \Spec A$ is a flat proper finitely presented morphism such that all geometric fibers are integral one-dimensional schemes, and  $\ff$ is an invertible sheaf of  $\oo_C$-modules.
\end{defin}

\begin{nt} \em
Since $p$ is a Cartier divisor on $C$, isomorphism~\eqref{complt} exists locally on $\Spec A$ (see~\cite[Corollaire~16.9.9]{EGA4}, where there is also a statement in the opposite direction). In the definition of a quintet (and of a proper quintet) we demand that this isomorphism exists globally on $\Spec A$ and fix a relative formal parameter $t$. Analogously,  a formal trivialization $e$ exists always locally on $\Spec A$, but we demand that it exists globally on $\Spec A$ and fix it.
\end{nt}

We note that for a quintet  $(C, p, \ff, t, e)$ the relative formal parameter $t$ defines an isomorphism
\begin{equation}  \label{K-isom}
A ((t))     \xrightarrow{\sim}  {\mathcal K }_{C, p}  \, \mbox{.}
\end{equation}

\bigskip

Clearly, when we vary $A$ we obtain the moduli stack (as an abstract stack in Zariski topology) of quintets
$\mm$ and, similarly, the moduli stack  of proper quintets $\mm_{\rm pr}$.
Using descent data and morphisms of descent data we can consider $\mm$ and $\mm_{\rm pr}$ as stacks over the category of all schemes (these are stacks associated with the prestacks that are given as stacks over affine schemes and empty over other schemes).

We consider also the corresponding  functors $\qq$ and $\qq_{\rm pr}$ of isomorphism classes of objects of these stacks as the covariant functors from the category of commutative rings to the category of sets:
\begin{gather*}
{ \mathcal{Q}} (A) = \{ \mbox{quintets over $A$} \}/ \, \mbox{isomorphisms}   \\
{ \mathcal{Q}}_{\rm pr}(A) = \{ \mbox{proper quintets over $A$} \}/ \, \mbox{isomorphisms}   \, \mbox{.}
\end{gather*}

There is a natural notion of an action of a group functor from the category of schemes (which will be in our case the sheaf of groups that is represented by an ind-scheme) on a
category fibered in groupoids, see, e.g.,~\cite{R}. In particularly, we can consider  a stack  as an example of such category.
Clearly, this action induces the corresponding action of the group functor on the functor of  isomorphism classes of objects of this stack.

\begin{Th}  \label{action}
\begin{enumerate}
\item
There is a natural action of the group ind-scheme $\G$ on the moduli stack  $\mm$ and on the moduli stack $\mm_{\rm pr}$. This action applied to a quintet
$(C, p, \ff, t, e)$ does not change the geometric fibers of $C \to \Spec A$ and does not change the topological space of $C$.
\item
Fixing a field $k$, the corresponding action of the Lie algebra $\Lie \G(k)$ is transitive after restriction to a proper quintet
 $(C, p, \ff, t, e)$
 with a smooth curve $C$ over $k$. In other words, $\Lie \G(k)$ is mapped surjectively to the tangent space of $\qq_{\rm pr}$ at  $(C, p, \ff, t, e)$  under the action on the quintet obtained from $(C, p, \ff, t, e)$  as the  base change  from $k$ to $k[\ve]/ (\ve^2)$. (The tangent space  consists of all elements from $\qq_{\rm pr}( k[\ve]/(\ve^2))$ giving  $(C, p, \ff, t, e)$ after reduction to $k$.)
\end{enumerate}
\end{Th}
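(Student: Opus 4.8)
The plan is to construct the action of part~1 by \emph{formal gluing} (Beauville--Laszlo descent along the Cartier divisor $p$) and then to deduce the infinitesimal transitivity of part~2 by identifying the differential of this action with the \v{C}ech coboundary of the two-piece cover of $C$ by the complement of $p$ and the formal disc. For part~1, I would first record that a quintet $(C,p,\ff,t,e)$ is equivalent, by descent along the Cartier divisor $p$, to the gluing data consisting of the open complement $C^\circ = C \setminus p$ with $\ff|_{C^\circ}$, the formal disc $\Spec \hat{\oo}_{C,p} \cong \Spec A[[t]]$ (via $t$) together with the trivialization $e$ of $\hat{\ff}_{C,p}$, and the identification of the common overlap $\Spec {\mathcal K}_{C,p} \cong \Spec A((t))$ via~\eqref{K-isom}. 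Given $g = (h,\vp) \in \G(A) = L\GG_m(A) \rtimes \AutL(A)$, the action reglues this data: the automorphism $\vp$ twists the gluing of the formal disc to $C^\circ$ along $\Spec A((t))$ (equivalently it changes the formal parameter, introducing poles when $\vp \in \AutLm(A)$, which is what genuinely reglues the family), while $h \in A((t))^*$ twists the transition isomorphism of the sheaf; reassembling produces a new quintet $g\cdot(C,p,\ff,t,e)$.

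The verifications are then functoriality in $A$ (immediate from functoriality of the gluing); that this is a genuine \emph{group} action, where the crucial point is that the twist $\vp_1(h_2)$ in the law $(h_1,\vp_1)(h_2,\vp_2) = (h_1\vp_1(h_2),\vp_1\vp_2)$ is exactly the transformation a sheaf-transition datum undergoes under a prior change of coordinate; and descent to the stacks $\mm$, $\mm_{\rm pr}$, i.e.\ compatibility with isomorphisms of quintets. For the last assertions of part~1, I would restrict to a geometric point $\bar{s}$: over a field all nilpotents vanish, so $\AutL(\bar{k}) = \AutLp(\bar{k})$ preserves $\bar{k}[[t]]$ and the coordinate change is an honest automorphism of the disc, while the $L\GG_m$-part only retwists $\ff_{\bar{s}}$ (possibly by a multiple of $p$ coming from $\underline{\dz}$). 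Hence the geometric fibre $C_{\bar{s}}$ is literally unchanged; the pole/nilpotent regluing occurs only over a non-reduced base and is invisible on the underlying topological space, which is canonically identified with that of $C$ since $C^\circ$ and the reduced disc are unchanged.

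For part~2, by Proposition~\ref{prop-iso-Lie} and Remark~\ref{tng-Lie} we have $\Lie\G(k) \simeq k((t)) \rtimes \mathop{\rm Der}(k((t)))$, the continuous differential operators of order $\le 1$ on $k((t)) \cong {\mathcal K}_{C,p}$. The key geometric input is that for an integral smooth proper curve $C$ the complement $C^\circ = C\setminus p$ is \emph{affine}, hence has no higher coherent cohomology, and the formal disc $\Spec \hat{\oo}_{C,p}$ is likewise cohomologically trivial; so for the relevant sheaves ($\oo_C$ for the $\ff$-direction, $T_C$ for the curve-direction) the cohomology of $C$ is computed by the two-term \v{C}ech complex of the cover $\{C^\circ,\Spec \hat{\oo}_{C,p}\}$, and $H^1(C,-)$ is the cokernel of $\Gamma(C^\circ,-)\oplus \Gamma(\Spec\hat{\oo}_{C,p},-) \to \Gamma(\Spec {\mathcal K}_{C,p},-)$. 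I would then identify the differential of the $\G$-action $\Lie\G(k) \to T_{(C,p,\ff,t,e)}\qq_{\rm pr}$ with $\Lie\G(k) = \Gamma(\Spec{\mathcal K}_{C,p},\ \le 1\text{-operators})$ followed by these cokernel projections: the abelian summand $k((t))$ (from $L\GG_m$) surjects onto the Picard direction $H^1(C,\oo_C) = {\mathcal K}_{C,p}/(\Gamma(C^\circ,\oo_C)+\hat{\oo}_{C,p})$, the summand $\mathop{\rm Der}(k((t)))$ (from $\AutL$) surjects onto the curve direction $H^1(C,T_C)$, and the residual coordinate/trivialization deformations of $t$ and $e$ lie in the image of the coordinate-changing subalgebra. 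Surjectivity onto the whole tangent space then follows because a cokernel projection is surjective and $\Lie\G(k)$ maps onto the full overlap sections.

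The main obstacle is the deformation-theoretic bookkeeping in part~2: one must prove that the tangent space to $\qq_{\rm pr}$, which a priori mixes deformations of $C$, of $\ff$, and of the rigidifying data $t$ and $e$, is correctly and \emph{simultaneously} described by these \v{C}ech cokernels, and that fixing $t$ and $e$ imposes no obstruction to surjectivity; this is the punctured-disc analog of~\cite[Lemma~4.3]{ADKP}. A secondary care point in part~1 is checking the cocycle compatibility of the two regluing operations so that the semidirect-product law is respected, and verifying that the construction genuinely descends to the stacks $\mm$, $\mm_{\rm pr}$ rather than only to the functors $\qq$, $\qq_{\rm pr}$ of isomorphism classes.
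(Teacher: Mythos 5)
Your part~1 follows the paper's construction in spirit (regluing along the punctured formal disc), but the step you compress into ``reassembling produces a new quintet'' is precisely where the entire difficulty lies, and your proposal offers nothing for it. Formal glueing/Beauville--Laszlo descent identifies quasi-coherent sheaves on a \emph{fixed} scheme with descent data; after twisting the identification over $\Spec A((t))$ by $\vp$ (the essential case being $\vp \in \AutLm(A)$, which introduces poles with nilpotent coefficients), you have \emph{new} descent data, and its effectivity --- the existence of a scheme $C'$ realizing it which is again a separated, flat, finitely presented family of curves, proper when $C$ is, with $p'$ an effective Cartier divisor near which the family is smooth and whose formal completion is $A[[t]]$ --- is not automatic. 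This is exactly what the paper proves: it chooses an affine $U \ni p$, defines $\oo(U')$ as the fibered product~\eqref{cartesian}, i.e.\ the subring~\eqref{detai} of $\oo(U \setminus p)$, and then establishes by hand the direct-sum decomposition~\eqref{dec-mod2} (giving flatness and base-change compatibility), finite presentation via a Noetherian-subring argument, the Cartier property of $p'$ and the identification of the completion with $A[[t]]$, smoothness near $p'$, independence of the choice of $U$, and preservation of separatedness, properness, geometric fibers and the topological space. Without these verifications you have no new quintet, so the group action is not constructed; your closing remark that ``descent to the stacks'' needs checking is a secondary point by comparison.

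For part~2 you take a genuinely different route from the paper, and it is the route whose key step you yourself flag as unfinished: identifying the tangent space to $\qq_{\rm pr}$ (which mixes deformations of $C$, $\ff$, $t$, $e$) with \v{C}ech cokernels for the two-piece cover, and the differential of the action with the coboundary. That identification is the punctured-disc analog of~\cite{ADKP} and is comparable in difficulty to the whole theorem, so as written your part~2 is a plan rather than a proof. The paper avoids this bookkeeping entirely by arguing pointwise on deformations: given any $(\tilde{C}, \tilde{p}, \tilde{\ff}, \tilde{t}, \tilde{e})$ over $k[\ve]/(\ve^2)$ reducing to $(C,p,\ff,t,e)$, the affine scheme $(C \setminus p) \times_k k[\ve]/(\ve^2)$ is smooth, hence formally smooth, and $\tilde{C} \setminus \tilde{p}$ is affine, so a lift $\kappa$ exists and is an isomorphism by flatness; the two resulting embeddings of $\oo(\tilde{C} \setminus \tilde{p})$ into $(k[\ve]/(\ve^2))((t))$ then differ by an element of $\AutL(k[\ve]/(\ve^2))$, the analogous comparison of the invertible sheaves gives a unit of ${\mathcal K}_{C,p} \otimes_k k[\ve]/(\ve^2)$, and together these form an element of $\Lie \G(k)$ carrying the trivial deformation to the given one. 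This yields surjectivity directly, with no global description of the tangent space needed; you would do well to adopt it, or else to supply the missing deformation-theoretic identification in full.
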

\begin{nt} \em
An action of $\G$ on $\mm$, which will constructed in the proof below,  depends on the choice  of an affine open covering of a neighbourhood of $p$ in $C$ for every quintet $(C, p, \ff, t, e)$ over $A$. The
result of the action of $\G(A)$ on $(C, p, \ff, t, e)$  depends on this covering up to a canonical isomorphism. Therefore the
 corresponding action of $\G$ on $\mathcal Q$ does not depend on these choices.
\end{nt}
\begin{proof}

{\em 1. We prove item~1 of Theorem~\ref{action}.}

Given a quintet $(C, p, \ff, t, e)$ over $A$ and an element $(h, \vp ) \in \G(A)$ we will define a new quintet $(C'', p'', \ff'', t'', e'')$ over $A$.
First we define a quintet $(C', p', \ff', t', e')$ over $A$ from the quintet $(C, p, \ff, t, e)$ and the element $\vp \in \AutL(A)$  (see~\cite[\S~17.3]{FB}, \cite[\S~4.1]{Pol}, but we provide detailed proofs below). Then we will define a quintet
$(C'', p'', \ff'', t'', e'')$ from the quintet $(C', p', \ff', t', e')$ and the element $h \in L \GG_m(A)$  (cf.~\cite[\S~18.1.3]{FB}). The procedure is as follows.

We can suppose that there is an open affine neighbourhood $U$ of $p$ in $C$ such that $U$ is smooth over $\Spec A$,
 the ideal of effective Cartier divisor $p$ restricted to $\Spec \oo(U)$  is generated  by nonzerodivisor of  $\oo(U)$,
 and  $\ff \mid_U \simeq \oo_U$. Indeed, we can do it by taking smaller  affine open subset in $  \Spec A$, and then to obtain new quintets over this new base. Then we can glue the resulting quintets to obtain a new quintet over $\Spec A$.

From the properties of $U$ ut follows that the natural map $\oo(U) \to  {\oo(U \setminus p)}$ is an embedding. Besides, since $U$ is smooth over $\Spec A$, we have that the homomorphism  $A \to \oo(U)$ is smooth, and hence this homomorphism is of finite presentation and flat, see, e.g., \cite[Tag 01V4, Tag 01TO]{Stacks}.

\medskip

{\em Construction of $(C', p', \ff', t', e')$.}

We have the natural homomorphisms of rings
$$
\oo(U) \lrto \hat{\oo}_{C, p}   \mbox{,}  \qquad     \gamma \, :  \, \oo(U \setminus p)  \lrto  {\mathcal K }_{C, p}  \mbox{,}
$$
which lead to the following Cartesian diagram
$$
 {\xymatrix{
   {\oo(U)} \, \ar[d] \ar@{^{(}->}[r] &  {\oo(U \setminus p)} \ar[d]_{\gamma}  \\
   { \hat{\oo}_{C, p}} \, \ar@{^{(}->}[r]   & {{\mathcal K }_{C, p}}
 }}
 $$
Using the fixed relative formal parameter $t$, we can replace the bottom row of this diagram by the embedding $A[[t]]  \hookrightarrow A((t))$.

Now we define $C'$ by gluing the new  affine curve $U'$   with $C \setminus p$ along $U \setminus p$, where the ring $\oo(U')$ is defined as the fibered product (more on existing and properties of such affine curve $U'$ we will write a little bit later)
\begin{equation}  \label{cartesian}
 {\xymatrix{
   {\oo(U')} \, \ar[d] \ar@{^{(}->}[r] &  {\oo(U \setminus p)} \ar[d]_{\vp \, \gamma}  \\
   { A[[t]] \, \ar@{^{(}->}[r]}   & {A((t))}
 }}
 \end{equation}
Here $\vp  \, \gamma$ is the composition of the map $\gamma$ and the map $\vp : A((t))  \to A((t))$.
The composition of homomorphisms $\oo(U') \to A[[t]] \to A$ defines an $A$-point ${p' \in U'(A) \subset C'(A)}$. We will prove a little bit later that $p'$ defines an effective Cartier divisor on $U'$ such that  $U' \setminus p'  \simeq U \setminus p$, and hence $C' \setminus p' \simeq C \setminus p$.

 From the properties of the construction, which we describe below, it will follow
 that the homomoprhism $\oo(U') \to A[[t]]$ (written as left vertical arrow in~\eqref{cartesian}) gives the isomorphism of the completion of $\oo(U')$ with respect to the ideal of $p'$ with $A[[t]]$. Via this isomorphism we define $t'$ as the element that is mapped to $t$.

  We define $\ff'$ and $e'$
such that $\ff' \mid_{C' \setminus p'} = \ff  \mid_{C \setminus p}$ and
using that $\ff \mid_U \simeq \oo_U$, and so that $\ff'$ preserves this property, i.e. $\ff' \mid_{U' \setminus p'} \simeq \oo_{U' \setminus p'}$ and we extend this sheaf trivially to $U'$ by means of this isomorphism.  We put $e'=e$.

\medskip

{\em First properties of the construction.}

Now we say more on the ring $\oo(U')$.

We note that if  $\psi \in \AutL(A)$ and  $\psi (t A[[t]]) = t A[[t]]$, then Cartesian diagram~\eqref{cartesian}
can be changed to the following Cartesian diagram
\begin{equation}  \label{cartesnew}
{\xymatrix{
   {\oo(U')} \, \ar[d] \ar@{^{(}->}[r] &  {\oo(U \setminus p)} \ar[d]_{  \psi\vp \, \gamma}  \\
   { A[[t]] \, \ar@{^{(}->}[r]^{\psi}}   & {A((t))}
 }}
\end{equation}
and we can consider the ring $A((\psi(t))) \simeq A((t))$ with the changed relative formal parameter from $t$ to $\psi(t)$.
Therefore by this procedure,  to investigate $\oo(U')$ and $C'$ we can and will suppose further that $t \in \oo(U)$ such that $\oo(U)/t \oo(U) \simeq A$ (see above our assumptions on $U$). From~\eqref{complt}  we have also that
\begin{equation}  \label{e1}
A[t, t^{-1}]  \subset \oo(U \setminus p)  \, \mbox{.}
\end{equation}

By~\eqref{decaut}, we have $\vp = \vp_+ \vp_-$, where $\vp_+ \in \AutLp(A)$  and $\vp_- \in \AutLm(A)$.  Since $\vp_+ (A[[t]])= A[[t]]$, the ring $\oo(U')$ depends only on the element $\vp_-$, which is characterized by the property~\eqref{ami} that implies
$$\vp_-(t) \in  tA[t^{-1}] \subset A[t,t^{-1}] \subset  \oo(U \setminus p)  \mbox{.}$$
Therefore, using~\eqref{cartesnew},  we  can and will  suppose further that $\vp  \in  \AutLm(A)$, and therefore  $\vp(t) \in \oo(U \setminus p)$. It implies also that ${\vp^{-1}  \in  \AutLm(A)}$,  and therefore by~\eqref{ami}  we have
$$\vp^{-1}(t) \in tA[t^{-1}] \subset A[t,t^{-1}] \subset  \oo(U \setminus p)  \mbox{.}$$

\smallskip

Definition of $\oo(U')$ from the diagram~\eqref{cartesian} can be written as the following
\begin{equation}  \label{detai}
\oo(U') = \left\{ w \mid w \in \oo(U \setminus p) \quad \mbox{such that} \quad \vp \gamma (w) \in A[[t]]     \right\}  \mbox{.}
\end{equation}

Since $\vp$ is a continuous automorphism of $A((t))$, there is an integer $l > 0$ such that $\vp (t^l A[[t]])  \subset A[[t]]$.
We have $\gamma(t^l \oo(U))  \subset t^l A[[t]]$. Therefore from~\eqref{detai} we obtain that
\begin{equation}  \label{e2}
\vp \gamma(t^l \oo(U))  \subset  A[[t]]  \qquad \mbox{and}  \qquad
t^l \oo(U)  \subset \oo(U')  \, \mbox{.}
\end{equation}

From our choice of $t$ we have a canonical isomorphism of $A$-modules
$$ A[t, t^{-1}]/ t^l A[t] \simeq \oo(U \setminus p) / t^l \oo(U)  \mbox{.}$$
Hence and from ~\eqref{e1}  we obtain that the $A$-module
$\oo(U \setminus p)$ is generated   by the following two  $A$-submodules:
\begin{equation} \label{foro}
\oo(U \setminus p) =
A[t, t^{-1}] +
t^l \oo(U)  \, \mbox{.}
\end{equation}

We denote
an element
\begin{equation}  \label{eq-b}
b = \vp^{-1}(t)  \in tA[t^{-1}] \subset A[t,t^{-1}] \subset \oo(U \setminus p) \, \mbox{.}
\end{equation}
From~\eqref{detai} we have that $b \in \oo(U')$.

Since $\vp$ is an automorphism of  $A[t,t^{-1}]$, from~\eqref{detai}, \eqref{foro} and \eqref{e2}
it is easy to see that the $A$-submodule $\oo(U')$ of $\oo(U \setminus p)$
is generated by the following two $A$-submodules:
\begin{equation}  \label{dec-mod}
\oo(U') = A[b] + t^l \oo(U)  \, \mbox{.}
\end{equation}

Hence we immediately obtain that the subring $\oo(U') $ of ring $\oo(U \setminus p)$  is generated  by the subrings $A[b]$ and $A + t^l \oo(U)$.

\medskip

{\em Flattness property and  base change.}

We claim that $A$-module
$\oo(U')$ given by~\eqref{dec-mod} is decomposed into the following  direct sum of $A$-submodules:
\begin{equation}  \label{dec-mod2}
\oo(U')= \bigoplus_{i =0}^{l-1} A b^i \bigoplus t^l \oo(U) \, \mbox{.}
\end{equation}
Indeed, we have $\vp(\oplus_{i =0}^{l-1} A b^i) = \oplus_{i =0}^{l-1} A t^i  $. Besides, we have that $(\oplus_{i =0}^{l-1} A b^i) \cap  t^l \oo(U) = 0$, since $b = \vp^{-1}(t)  \in tA[t^{-1}] $ and therefore $(\oplus_{i =0}^{l-1} A b^i) \subset t^{l-1} A[t^{-1}]$, and we use that $t^{l-1} A[t^{-1}]  \cap t^l \oo(U) =0$. Now \eqref{dec-mod} implies that   \eqref{dec-mod2} will follow now if we will prove that any element $b^m$, where $m \ge l$,
belongs to the $A$-module in the right hand side of~\eqref{dec-mod2}. We will prove it by induction on $m$, where the base of induction is the evident case $m=l-1$. Suppose we proved it for some $m \ge l-1$ and all natural numbers less than $m$. We have $t^{m+1} \in t^l \oo(U) \subset \oo(U')$. Therefore using~\eqref{detai} and since $\vp(t) $ is of type~\eqref{ami}, the element  $s=\vp(t^{m+1}) $ is a polynomial from $A[t]$ whose degree is $m+1$ and the coefficient at $t^{m+1}$ is $1$.
 We apply $\vp^{-1}$ to the polynomial $s$, and using that $b= \vp^{-1}(t)$, we obtain that $t^{m+1} = \vp^{-1}(\vp(t^{m+1})) = \vp^{-1}(s)$
can be written as the element from $A[b]$, where this polynomial in variable $b$ has degree $m+1$  and the coefficient at $b^{m+1}$ is $1$. Using that $t^{m+1} \in t^l \oo(U)$ and the induction hypothesis, we obtain that $b^{m+1}$ belongs  to the $A$-module in the right hand side of~\eqref{dec-mod2}.

Since $\oo(U)$ is a flat $A$-module, we obtain from~\eqref{dec-mod2} that $\oo(U')$ is also a flat $A$-module.

\smallskip

From description of the construction $\oo(U')  \subset \oo(U \setminus p)$ by formula~\eqref{dec-mod2} it follows that this construction commutes with base change from $A$ to another commutative ring.

\medskip

{\em Cartier divisor $p'$, completion at $p'$ and complement to $p'$.}

We have that the $A$-algebra $\vp(\oo(U'))$ is an $A$-subalgebra of $A[[t]]$. The point  ${p' \in U'(A)}$ is defined through  the following  homomorphism
$$ \oo(U') \stackrel{\vp}{\lrto} \vp(\oo(U')) \stackrel{\sigma}{\lrto} A  \, \mbox{,}$$ where $\sigma$ is induced  by the natural homomorphism $A[[t]] \to A$. We claim that that ideal $ \Ker \sigma $ that is the kernel of $\sigma$ is  generated by the element $t = \vp(b) \in \vp(\oo(U')) $. Indeed, we suppose that $v \in \oo(U')$ such that $\vp(v)  \in tA[[t]]$. We have $b^{-1} = (\vp^{-1}(t))^{-1} \in t^{-1}A[t^{-1}]$, since $b \in tA[t^{-1}] $ is of type~\eqref{ami}. Therefore $b^{-1} \in \oo(U \setminus p)$. Hence $\vp(v) = (\vp(v) t^{-1}) t$, where $\vp(v)t^{-1} = \vp(vb^{-1}) \in \vp(\oo(U'))$, because $vb^{-1}  \in \oo(U \setminus p)$ and  $\vp(vb^{-1}) \in A[[t]]$. Thus we proved that $ \Ker \sigma  = t \vp(\oo(U')) $, and hence $p'$ is an effective Cartier divisor on~$U'$.

Besides, from these reasonings it follows that the completion of $\vp(\oo(U'))$ with respect to the ideal $t \vp(\oo(U'))$ is $A[[t]]$.

We have that $\oo(U') \subset \oo(U \setminus p)$. We claim that $\oo(U' \setminus p') = \oo(U \setminus p)$. Indeed, we have proved that
 $\oo(U' \setminus p') = \oo(U')[b^{-1}]$. Since for any integer $m$ we have $b^m \in t^mA[t^{-1}]$, we obtain $\oo(U' \setminus p') \subset \oo(U \setminus p)$. To obtain the reverse inclusion, we note that $t^{-1} \in A[b, b^{-1}]$, since $\vp$ is the automorphism of $A[t, t^{-1}]$ and hence $A[t,t^{-1}] = A[b, b^{-1}]$.

\medskip

{\em Finite presentability and smoothness property.}

 Now we prove that the ring homomorphism  $A \to \oo(U')$ is of finite presentation. We know that $A \to \oo(U)$ is of finite presentation. Hence
 there is a surjective $A$-algebra homomorphism
  $\kappa \, : \, A[y_1,  \ldots, y_r]  \to \oo(U) $  such that $\kappa(y_i) \in t\oo(U)$ for any $1 \le i \le r$. Let $\omega_1, \ldots, \omega_m$ be all monomials  of elements $y_1, \ldots, y_r$ of total degrees from $l$ to $2l-1$.
 Since $A[t]/ t^l A[t] \simeq \oo(U)/ t^l \oo(U)$, we obtain a surjective $A$-algebra homomorphism
  $$\theta' \, : \, A[x_0,  \ldots, x_m]  \to \oo(U) \, \mbox{,}$$
   where $\theta'(x_0) =t $, $\theta'(x_i) = \kappa(\omega_i)$  for $1 \le i \le m$. We note that   $\theta'(T) =t^l \oo(U)$, where $T$ is the $A$-submodule generated by all monomials of elements $x_1, \ldots, x_m$ of total positive degrees.

Let $ \theta \, : \,   A[x_0, x_0^{-1}, x_1,  \ldots, x_m]  \to \oo(U \setminus p)$ be the natural surjective $A$-algebra homomorphism that extends $\theta'$.
 Since $A \to \oo(U)$ is of finite presentation, the ideal $\Ker \theta'$ is finitely generated by polynomials $f_1, \ldots, f_s$, see~\cite[Tag 00F2]{Stacks}.  Let $a_j$, where ${1 \le j \le n}$, be elements of $A$ which are all the  coefficients of the polynomials $f_i$, where $1 \le i \le s$, and all the  coefficients of the Laurent polynomial $b$, see~\eqref{eq-b}. Let $B \subset A$
be a subring generated over $\dz$ by  elements $a_j$, where $1 \le j \le n$. Then $B$ is a Noetherian subring. Since elements $f_1, \ldots, f_s$ are from $B[x_0, \ldots, x_m]$, it is easy to see that $\theta(B[x_0, \ldots, x_m]) {\otimes_B} A \simeq \oo(U) $. Since
$\theta(B[x_0, \ldots, x_m]) / \theta(T) \simeq \oplus_{i=0}^{l-1} Bt^i$ and ${\rm Tor}^B_1 (\oplus_{i=0}^{l-1} Bt^i, A) =0$, we obtain that
$\theta(T) {\otimes_B} A \hookrightarrow \theta(B[x_0, \ldots, x_m]) {\otimes_B} A $. Hence we have
 $\theta(T) {\otimes_B} A \simeq t^l\oo(U) $.

We consider the subring
 $B[{b_0}, x_1, \ldots, x_m]  \subset B [x_0, x_0^{-1}, x_1, \ldots, x_m]$, where the element
 ${b_0}$ is from $B[x_0, x_0^{-1}] $ such that ${\theta({b_0}) =b}$.   From~\eqref{dec-mod2} and above reasonings     we obtain
\begin{equation}  \label{isom-fpr}
\theta (B[{b_0}, x_1, \ldots, x_m]) \otimes_B  A \simeq \oo(U')
\end{equation}
 The  polynomial ring $B[z_0, \ldots, z_m]$ is a Noetherian ring. Therefore
 the kernel of the homomorphism  which is the composition
 $$B[z_0, \ldots, z_m] \stackrel{\tau}{\lrto}  B[b_0, x_1, \ldots, x_m]  \stackrel{\theta}{\lrto} \oo(U \setminus p)  \, \mbox{,}$$
  where $\tau(z_0) = b_0 $, $\tau(z_i) = x_i$ for $1 \le i \le m$,
  is a finitely generated ideal. Hence and from~\eqref{isom-fpr} we obtain that  the ring homomorphism $A \to \oo(U')$ is of finite presentation.

\smallskip

Since $A \to \oo(U')$ is of finite presentation and $p'$ is an effective Cartier divisor on $U'$, the morphism $U'  \to \Spec A$ is smooth near $p'$,
see Remark~\ref{Cartier_smooth}. Since $\oo(U)$ is smooth $A$-algebra and  $\oo(U' \setminus p') = \oo(U \setminus p)$, the $A$-algebra $\oo(U')$
is also smooth, see, e.g, ~\cite[Tag 01V4]{Stacks}.

\medskip
{\em Independence on the choice of $U$.}

Now we prove that the construction of $C'$ does not depend on the choice of $U$ (up to a canonical isomorphism). By means of subdividing of $\Spec A$ and taking a smaller open affine subset of $\Spec A$ if necessary, we can suppose that we change the open set $U$ to the affine open subset
$$D_{U, f} = \{ x \in U \mid  f(x)  \ne 0 \} = \Spec \, \oo(U)_f$$
of $U$, where      $f \in \oo(U)$
such that $f \mod t \oo(U)  \in A^*$.  By~\eqref{declg}  we have decomposition $\vp(\gamma (f) )  = r_- r_+$, where $r_- \in  \vmi(A) $ and $r_+ \in \GG_m(A) \times  \vpl(A)$. As above, we have that elements $r_-$, $r_-^{-1}$ and $\vp^{-1} (r_-^{-1})$ belong to the ring $A[t, t^{-1}] \subset \oo(U \setminus p)$.  Let $g  = \vp^{-1} (r_-^{-1}) f$ be an element from $\oo(U \setminus p)$.   We have that $D_{ U, f} \setminus p = D_{U \setminus p, g} $  and $\vp(\gamma(g)) =r_+$. Besides, $r_+ A[[t]]= A[[t]]$.
Therefore, using~\eqref{cartesian} or~\eqref{detai}   written also for $D_{U, f}$, we have that  by changing of $U$ to $D_{U, f}$ we obtain the changing of $U'$ to $D_{U', g}$, which is an affine open subset. (It is important that from the explicit kind of $D_{U', g}$ we know that it will be an open subset.) Therefore  the construction of $C'$ does not depend on the choice of open $U$ (up to a canonical isomorphism).

\medskip
{\em Further properties.}

We have already checked that the construction of $\oo(U')$ commutes with  base change from $A$ to another commutative ring. Hence it is easy to see that  the construction of $(C', p', \ff', t', e')$ commutes with  base change.

From~\eqref{dec-mod2}, \eqref{eq-b}, \eqref{decaut} (all the coefficients of $b$ except for the coefficient at $t$ are nilpotent elements from $A$)  it follows that $C' \times_A A_{\rm red} = C \times_A A_{\rm red}$, where ${A_{\rm red} = A / {\rm Nil} A}$. Hence this construction does not change  the reduced structures of the schemes, i.e.   ${C'_{\rm red} = C_{\rm red}}$. Hence we obtain that for any scheme $Z$ over $A$ we have that
$$(C \times_{\Spec A} Z)_{\rm red} =  (C' \times_{\Spec A} Z)_{\rm red} \mbox{.}$$
It implies that the topological spaces of schemes $C \times_{\Spec A} Z$ and
$C' \times_{\Spec A} Z$ coincide. In particularly, $C$ and $C'$ has the same topological spaces. From these reasonings it follows also that geometric fibres of $C \to \Spec A$ do not change under this construction.

The morphism $C \to \Spec A$ is separated. This is equivalent to the property that  the diagonal  is a closed subset in the topological space of  $C \times_{\Spec A} C$, see, e.g., \cite[Tag 01KH]{Stacks}.  From above reasonings we have the same condition for  the diagonal subset in
$C' \times_{\Spec A} C'$. Therefore the morphism  $C' \to \Spec A$ is also separated.

Suppose that the morphism $C \to \Spec A$ is proper. Then this morphism is universally closed. From the properties of the construction described above we have that to prove that $C' \to \Spec A$ is proper it is enough to prove that this morphism is universally closed. But this follows, because the topological spaces of $C \times_{\Spec A} Z$ and
$C' \times_{\Spec A} Z$ coincide.

\medskip

{\em Construction and properties of $(C'', p'', \ff'', t'', e'')$.}

We note that $\ff' \mid_{U'} \simeq  \oo_{U'}$.
We have the natural homomorphisms of $\oo(U')$-modules
$$
\ff'(U') \lrto \hat{\ff}_{C', p'}  \xrightarrow{\sim} \hat{\oo}_{C',p'}   \mbox{,}  \qquad     \beta \, :  \, \ff'(U' \setminus p') \lrto  {{\mathcal K }_{C', p'}}
$$
that are constructed by means of the formal trivialization $e'$.
These homomorphisms  lead to the following Cartesian diagram of $\oo(U')$-modules
\begin{equation}  \label{cart-sh}
 {\xymatrix{
   {\ff'(U')} \, \ar[d] \ar@{^{(}->}[r] &  {\ff'(U' \setminus p')} \ar[d]_{\beta}  \\
   {  \hat{\oo}_{C',p'} } \, \ar@{^{(}->}[r]   & {{ {\mathcal K }_{C', p'}  }}
 }}
 \end{equation}
Now we define $C''=C'$, $p''=p'$, $t''=t'$  and the sheaf $\ff''$ by gluing the sheaf $\ff'' \mid_{U'}  \simeq \oo_{U'}$   with the sheaf $\ff' \mid_{C' \setminus p'}$ over $U' \setminus p'$, where the gluing data is uniquely defined by the fibered product of $\oo(U')$-modules
\begin{equation}  \label{diag-sheaf}
 {\xymatrix{
   {\ff''(U')} \, \ar[d] \ar@{^{(}->}[r] &  {\ff'(U' \setminus p')} \ar[d]_{h \, \beta}  \\
   {\hat{\oo}_{C',p'}   \, \ar@{^{(}->}[r]}   & {{\mathcal K }_{C', p'}}
 }}
 \end{equation}
Here $h \, \beta$ is the composition of the map $\beta$ and the map ${ \mathcal K }_{C', p'}  \lrto  {\mathcal K }_{C', p'}$ given by the multiplication on $h \in  L \GG_m(A) \simeq A((t'))^*$. We define  $e''$  as the preimage of $1$ under the isomorphism which is the completion of the map  ${\ff''(U')}  \to {\hat{\oo}_{C',p'}}$ (that was constructed by $e'$).

To check that this construction is well-defined we   choose a relative formal parameter $t'$ at $p'$ from $\oo(U')$ (as we did it in the previous reasonings) and use the decompositions~\eqref{declg} and \eqref{dec} for $h$. According to these decompositions we have $h = h_+ h'$, where ${h_+ \in \GG_m \times \vpl(A)}$, and ${h'  \in \vmi(A) \times \underline{\dz}(A)}$.

We have that $h' \in A[t',t'^{-1}]  \subset \oo(U' \setminus p')$. Besides, from~\eqref{diag-sheaf} we have that
$$
\ff''(U') = \left\{ z \mid z \in \ff'(U' \setminus p') \quad \mbox{such that} \quad h' \beta (z) \in A[[t']]     \right\}  \mbox{.}
$$
Hence  $\ff''(U') = h'^{-1} \ff'(U')  \subset \ff'(U' \setminus p')$, since if $h' \beta (z) \in A[[t']]$  for $z \in \ff'(U' \setminus p')$, then
$h' z \in \ff'(U' \setminus p') $ and $\beta(h'  z) \in A[[t']]$, and from~\eqref{cart-sh} we have that $h'  z  \in \ff'(U')$.

Now to check the independence of the construction on  the choice of $U'$ it is enough to check it  when we change $U'$ to the affine open subset  $D_{U', f}$, where $f \in \oo(U')$
such that $f \mod t' \oo(U')  \in A^*$ (after subdividing of $\Spec A$ and taking a smaller open affine subset of $\Spec A$ if necessary).
In this case we obtain that $\ff'(U')$ is changed to $\ff'(D_{U', f}) = \ff'(U')_f $, and $\ff''(U')$ is changed to  $\ff''(D_{U', f}) = \ff''(U')_f $. The construction maps $\ff'(U')_f $ to $\ff''(U')_f $.

 So, the construction maps quintets to quintets and proper quintets to proper quintets.

It is also easy to see that we have  an action of the group functor $\G$ (which is represented by the group ind-scheme) on the moduli stack $\mm$ and the moduli stack $\mm_{\rm pr}$.

\bigskip

{\em 2. We prove item 2 of Theorem~\ref{action}.}

Let $(C, p, \ff, t, e)$ be a proper quintet
 with a smooth curve $C$ over the field  $k$. Let $(\tilde{C}, \tilde{p}, \tilde{\ff}, \tilde{t}, \tilde{e})$ be a proper quintet over $A$,
 where $A = k[\ve]/ (\ve^2)$, such that its  reduction to $k$ be the quintet $(C, p, \ff, t, e)$.

The affine scheme $(C \setminus p) \times_k (k[\ve]/(\ve^2))$ is smooth and, consequently, formally smooth over  $k[\ve]/(\ve^2)$, and the scheme $\tilde{C} \setminus \tilde{p}$ is also affine.
Therefore by the definition of formally smooth morphism there is a morphism $\kappa$ such that the following diagram is commutative
$$
 {\xymatrix{
   { (C \setminus p) \times_k (k[\ve]/(\ve^2)) }  \ar[d]  &  {C \setminus p}  \ar[l]\ar[d]  \\
   {\Spec k[\ve]/ (\ve^2)}     & {\tilde{C} \setminus \tilde{p}}  \ar[l]  \ar[ul]_{\kappa}
 }}
 $$

From the flatness property of $\tilde{C}$ it is easy to see that $\kappa$ is an isomorphism. Using $\kappa$ and two above quintets, we have two embeddings of the ring $\oo(\tilde{C} \setminus \tilde{p})$
into the ring $(k[\ve]/ (\ve^2)) ((t))$ which are differ by the element from~$\AutL (A)$ (the constructed isomorphism is continuous, since it depends only on finite number of elements from the ring $\oo(\tilde{C} \setminus \tilde{p})$), see~\cite[\S~17.3.4]{FB}. Analogously we obtain an element from
$({\mathcal K }_{C, p} \otimes_k (k[\ve]/ (\ve^2)))^*$ when compare two invertible sheaves from quintets over $k[\ve]/ (\ve^2)$ which have the same curves,  points and  relative formal parameters and  coincide after reduction to $k$.

The constructed elements are from the corresponding Lie algebras.
This gives an element from $\Lie \G(k)$, which maps the quintet obtained as the base change from $k$ to $k[\ve]/(\ve^2)$ of the quintet $(C, p, \ff, t, e)$   to the quintet $(\tilde{C}, \tilde{p}, \tilde{\ff}, \tilde{t}, \tilde{e})$.
\end{proof}

\begin{nt} \em  We note that $\GG_m \subset L\GG_m$ (see~\eqref{md}). It is easy to see that via this embedding we have that  $\GG_m $ is a normal group subfunctor of $\G$, and for any commutative ring $A$ any element from $\GG_m(A)$ maps any quintet over $A$ to an isomorphic quintet over~$A$.
\end{nt}

\begin{nt}
\em
Analog of item~2 of~Theorem~\ref{action} in complex analytic category was given in Proposition~3.19 of~\cite{ADKP}.
\end{nt}

\begin{nt} \em
It was proved in~\cite[\S~3]{ADKP} that in complex analytic category some analog of functor~${\mathcal Q}_{\rm pr}$ (when all fibers of $C$ from a quintet are smooth projective curves of genus~$g$)
is represented by an infinite-dimensional complex manifold.
\end{nt}

\subsection{Determinant line bundle on the moduli stack  of proper quintets  $\mm_{\rm pr}$}  \label{det-lin-bund}

There is a natural notion of a quasicoherent sheaf on a category fibered in groupoids over the category of schemes, see, e.g., Definition~2.1 from
Chapter XIII, \S~2 of~\cite{ACG}.

In particularly, the moduli stack of proper quintets $\mm_{\rm pr}$ is a category fibered in groupoids. To define a  linear bundle on $\mm_{\rm pr}$ it is enough to define  a linear bundle on $\Spec A$ for every proper quintet over any commutative ring $A$ such that the corresponding  compatibilty conditions between these linear bundles are satisfied.

\begin{prop}  \label{cohom}
Let   $C \to \Spec A$ be a family of curves and $p \in C(A)$ such that $C, p$ satisfies the corresponding conditions
from the definition of proper quintet (see Definition~\ref{def-quint}).  Let $\ff$ be a locally free sheaf of $\oo_C$-modules.
Then there is an integer $n >0$ such that the following properties are satisfied.
\begin{enumerate}
\item \label{item1} For any $m \ge n $ the complex
\begin{equation}   \label{compl-AG}
H^0(C, \ff(mp))  \lrto H^0(C, \ff(mp)/\ff )
\end{equation}
is a complex of finitely generated projective $A$-modules whose cohomology groups coincide with the cohomology groups $H^*(C, \ff)$.
    \item
    \label{item2}
    For any $m \ge n $, for any   Cartier divisor $\tilde{p} $ on $C$ such that
 $\oo(mp) \subset \oo(\tilde{p}) $ we have an isomorphism
    $$
    H^0(C, \ff(\tilde{p})) / H^0(C, \ff(mp))  \xrightarrow{\sim}  H^0(C, \ff(\tilde{p})/\ff(mp))  \, \mbox{.}
    $$
\item \label{item3}
Suppose that $(C, p , \ff)$ is a part of a proper quintet  $(C, p, \ff, t, e)$. Then
for any  $m \ge n $ complex~\eqref{compl-AG} is quasi-isomorphic to the complex
\begin{equation}  \label{kr-compl}
 \ff (C \setminus p)  \lrto  A((t))/A[[t]]  \, \mbox{,}
\end{equation}
 where the map in this complex is induced by the formal trivialization $e$,  isomorphism~\eqref{K-isom} and the fact that $$\ff (C \setminus p) =  \varinjlim_{m  > 0}   H^0(C, \ff (mp)) \mbox{.}$$
 Besides, for any $k_1 \le k_2$ we have canonical isomorphisms of $A$-modules
 \begin{equation}  \label{quot-isom}
 H^0(C, \ff(k_2)/\ff(k_1)) \xrightarrow{\sim} t^{-k_2}A[[t]]/ t^{-k_1} A[[t]] \, \mbox{.}
 \end{equation}
\end{enumerate}
\end{prop}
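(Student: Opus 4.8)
The plan is to deduce all three items from a single choice of $n$ coming from relative Serre vanishing for the section $p$, to read off Items~\ref{item1} and~\ref{item2} from the long exact cohomology sequences of the short exact sequences $0 \to \ff(k_1 p) \to \ff(k_2 p) \to \ff(k_2 p)/\ff(k_1 p) \to 0$, and to obtain Item~\ref{item3} by passing to the filtered colimit over $m$.

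First I would establish the key input: there is an $n > 0$ such that for every $m \ge n$ one has $H^1(C, \ff(mp)) = 0$ and $H^0(C, \ff(mp))$ is a finitely generated projective $A$-module whose formation commutes with base change. Since $p$ is a section landing in the smooth locus, its restriction to every geometric fibre $C_s$ is a single reduced point, so $\oo_C(p)$ has fibrewise degree $1$ on the integral proper (hence projective) curves $C_s$ and is therefore relatively ample; relative Serre vanishing then gives the uniform vanishing of $H^1$, and cohomology and base change (using that $\pi : C \to \Spec A$ is flat, proper and finitely presented) gives that $\pi_* \ff(mp) = H^0(C, \ff(mp))$ is finite locally free. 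Because $A$ need not be Noetherian, this step is carried out by descending to a Noetherian subring $B \subset A$ over which a model of $(C, p, \ff)$ exists, applying the Noetherian statements there, and then base-changing back to $A$, exactly as in the proof of Theorem~\ref{action}; cf.~\cite{Stacks}. I expect this to be the main obstacle, the rest being formal.

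For Item~\ref{item1}, I note that $\ff(mp)/\ff$ is supported on the effective Cartier divisor $mp$, which is finite locally free of rank $m$ over $\Spec A$ and on which $\ff(mp)/\ff$ is locally free of rank $\rank \ff$; hence $H^0(C, \ff(mp)/\ff)$ is finite projective over $A$ and carries no higher cohomology. Combined with the previous paragraph, both terms of complex~\eqref{compl-AG} are finite projective. Applying $\pi_*$ to $0 \to \ff \to \ff(mp) \to \ff(mp)/\ff \to 0$ and using $H^1(C, \ff(mp)) = 0$ produces the four-term exact sequence $0 \to H^0(C,\ff) \to H^0(C,\ff(mp)) \to H^0(C, \ff(mp)/\ff) \to H^1(C,\ff) \to 0$, which identifies the kernel and cokernel of~\eqref{compl-AG} with $H^0(C,\ff)$ and $H^1(C,\ff)$. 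Item~\ref{item2} follows the same way from $0 \to \ff(mp) \to \ff(\tilde p) \to \ff(\tilde p)/\ff(mp) \to 0$: since $H^1(C, \ff(mp)) = 0$ for $m \ge n$, the map $H^0(C,\ff(\tilde p)) \to H^0(C, \ff(\tilde p)/\ff(mp))$ is surjective with kernel $H^0(C, \ff(mp))$, giving the asserted isomorphism.

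For Item~\ref{item3}, the isomorphisms~\eqref{quot-isom} come from computing the sections of $\ff(k_2 p)/\ff(k_1 p)$, which is supported at $p$, on the formal neighbourhood: the trivialization $e$ and~\eqref{complt} identify $\hat{\ff}_{C,p}$ with $A[[t]]$, and under~\eqref{K-isom} the completion at $p$ of $\ff(kp)$ becomes $t^{-k}A[[t]]$, so the quotient is $t^{-k_2}A[[t]]/t^{-k_1}A[[t]]$. Using these identifications with $k_1 = 0$, $k_2 = m$, together with $\ff(C\setminus p) = \varinjlim_m H^0(C,\ff(mp))$ and $A((t))/A[[t]] = \varinjlim_m t^{-m}A[[t]]/A[[t]]$, I would exhibit~\eqref{kr-compl} as the filtered colimit over $m \ge n$ of the complexes~\eqref{compl-AG}, the transition maps being the natural inclusions. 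The induced map of complexes from~\eqref{compl-AG} to~\eqref{kr-compl} is then a quasi-isomorphism: cohomology commutes with filtered colimits, and by Item~\ref{item1} the transition maps on $H^0$ and $H^1$ are, through the connecting homomorphisms attached to the compatible short exact sequences (which restrict to the identity on $\ff$), the identity on $H^0(C,\ff)$ and on $H^1(C,\ff)$ respectively.
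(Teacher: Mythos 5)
Your proposal is correct, and its skeleton coincides with the paper's: fix $n$ using ampleness of $\oo(p)$, which follows from fibrewise positivity of degree (the paper quotes \cite[Corollaire~9.6.4]{EGA-IV-3} for exactly this), deduce items~1 and~2 from the long exact sequences of $0 \to \ff \to \ff(mp) \to \ff(mp)/\ff \to 0$ and $0 \to \ff(mp) \to \ff(\tilde{p}) \to \ff(\tilde{p})/\ff(mp) \to 0$, and obtain item~3 by passing to the filtered colimit over $m$. The one genuinely different step is how you obtain, over a possibly non-Noetherian $A$, the vanishing $H^q(C, \ff(mp))=0$ for $q>0$ and the finite projectivity of $H^0(C, \ff(mp))$: you descend the whole situation to a Noetherian finitely generated subring $B \subset A$ and invoke classical relative Serre vanishing plus cohomology and base change, whereas the paper stays over $A$ and cites non-Noetherian vanishing results (\cite[III, Corollaire 2.3]{SGA6}, \cite[Tag 0695]{Stacks}, \cite{BE}), then Kiehl's theorem (\cite{Kie}, \cite[Tag 0B91]{Stacks}) to conclude that $H^0(C, \ff(mp))$ is a perfect, hence finitely presented, $A$-module, and finally a \v{C}ech-complex argument (finite affine cover with affine intersections, flat terms, acyclicity in positive degrees) to deduce flatness and hence projectivity. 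Both routes work: yours trades the heavier non-Noetherian citations for the standard limit formalism (the proper flat finitely presented family, the section, the sheaf and relative ampleness all descend to a finitely generated subring) together with a base-change theorem for cohomology of proper flat finitely presented morphisms --- which in this generality is itself usually established via perfectness of $R\pi_*$, i.e. by results close to those the paper quotes --- while the paper's argument needs no descent and no base change at all. Two smaller remarks: your description of $H^0(C, \ff(mp)/\ff)$ as sections of a locally free sheaf on the finite locally free thickening $mp$ is a repackaging of the paper's observation that it is an iterated extension of the projective $A$-modules $H^0(p, \ff((k+1)p)/\ff(kp))$; and your justification of item~3 (cohomology commutes with filtered colimits, and the transition maps act as the identity on $H^0(C,\ff)$ and $H^1(C,\ff)$ by functoriality of the connecting homomorphisms) spells out what the paper compresses into the single assertion that complex~\eqref{kr-compl} is the inductive limit of the complexes~\eqref{compl-AG}.
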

\begin{proof}
1. We note that the restriction of $p$ to any fiber of $  C \to \Spec A$ is an ample divisor on a curve. Therefore by~\cite[Corollaire~9.6.4]{EGA-IV-3}
$\oo(p)$ is an ample sheaf on $C$. Therefore by~\cite[III, Corollaire 2.3]{SGA6}, \cite[Tag 0695]{Stacks}  (see also Theorem~A6 from~\cite{BE} and the explanations  before this theorem)  there is an integer $n > 0$ such that $H^q(C, \ff(mp)) =0 $ for any $q > 0$ and any $m \ge n $.

By \cite[Theorem~2.9]{Kie}, \cite[Tag 0B91]{Stacks} (see also Theorem~A7 from~\cite{BE})   $H^0(C, \ff(mp))$ is a pseudo-coherent, and even  perfect, $A$-module. Hence  $H^0(C, \ff(mp))$  is a finitely presented $A$-module.  The scheme $C$ is quasicompact and separated. Therefore we can write
the \v{C}ech complex for the sheaf $\ff(mp)$  and a  finite affine covering of $C$, where the all intersections will be also affine open subsets. This complex is of finite length and all of the terms of this complex are flat $A$-modules. Since $H^q(C, \ff(mp)) =0 $ for any $q > 0$, this complex is acyclic in positive degrees, see~\cite[Tag~01XD]{Stacks}. Therefore $H^0(C, \ff(mp))$ is a flat $A$-module. Since this module is also finitely presented, it is a finitely generated projective $A$-module.

The $A$-module $H^0(C, \ff(mp)/\ff)= H^0(p, \ff(mp)/\ff)$ is a finitely-generated projective $A$-module, since it is an extension of $A$-modules $H^0(p, \ff((k+1)p)/ \ff(kp)$ of the same  type.

Now the statement on the cohomology groups $H^*(C, \ff)$ follows from the long exact cohomological sequence associated with the exact sequence of sheaves:
$$
0 \lrto \ff \lrto \ff(mp) \lrto \ff(mp)/\ff  \lrto 0 \, \mbox{.}
$$

2. The statement of  item~\ref{item2} follows from
$H^1(C, \ff(mp)) =0$ and the long exact cohomological sequence associated with exact sequence of sheaves
$$
0 \lrto \ff(mp) \lrto \ff(\tilde{p})  \lrto \ff(\tilde{p})/\ff(mp)  \lrto 0  \, \mbox{.}
$$

3. Complex~\eqref{kr-compl} is an inductive limit of complexes~\eqref{compl-AG}  over all integers $m \ge n$.
Isomorphisms~\eqref{quot-isom} follow from the existence of $t$ and $e$ in the definition of proper quintet.

\end{proof}
\begin{nt} \em
The complex~\eqref{compl-AG} is the complex (4.15) from Chapter~XIII, \S~4 of~\cite{ACG}. We added the proofs and the corresponding references when the ring $A$ is non-Noetherian.
\end{nt}
\begin{nt} \em
The complex~\eqref{kr-compl} is quasi-isomorphic to the complex
$$
\ff (C \setminus p) \oplus A[[t]] \lrto  A((t)) \, \mbox{.}
$$
When $A$ is a Noetherian ring, the statement that the cohomology groups of the last complex coincide with $H^*(X, \ff)$ is a consequence of Theorem~2 from~\cite{Osip}, since  $p$ is an ample divisor (see the proof of Proposition~\ref{cohom}), and therefore $C \setminus p$ is an affine open subset.
\end{nt}
\begin{nt}  \em
It follows rom the proof of Proposition~\ref{cohom}  that the statement in item~\ref{item3} about   complex~\eqref{kr-compl} remains true when we  change $A((t))/ A[[t]]$  to  $\varinjlim_{m  > 0}   H^0(C, \ff (mp)/ \ff)$ and  take $C$ and $p$ as in the proposition, and $\ff$  any locally free sheaf of $\oo_C$-modules. Besides, we do not demand an existence of $t$ and $e$.
\end{nt}

\bigskip
By item~\ref{item1} of Proposition~\ref{cohom}  for any proper quintet $(C, p, \ff, t, e)$ over $A$, for any  integer $m $ (as in the proposition) we have the projective $A$-module of rank $1$
\begin{equation}  \label{det-bund}
\Hom\nolimits_A \left( \bigwedge^{\rm max}  H^0(C, \ff(mp)/\ff ),  \bigwedge^{\rm max} H^0(C, \ff(mp))  \right)  \, \mbox{.}
\end{equation}
By item~\ref{item2} of Proposition~\ref{cohom}  this $A$-module does not depend on the choice of $m$ up to a canonical isomorphism (or we can choose the minimal $m$ that we need in Proposition~\ref{cohom}), and this $A$-module is compatible with  base change such that the obvious diagram of compatibility is satisfied. This $A$-module is the determinant of the cohomology of $\ff$, which was denoted by $\det R \pi_* \ff$ in~\S~\ref{Deligne},
where $\pi \, : \, C \to \Spec A$ is the morphism from the definition of the quintet.

Thus by a proper quintet $(C, p, \ff, t, e)$ over $A$ we have  constructed a projective $A$-module of rank $1$, which is the determinant of the cohomology of $\ff$. This defines a linear bundle on $\mm_{\rm pr}$ which we call {\em the determinant line bundle on $\mm_{\rm pr}$}.

\bigskip

A linear bundle on a stack (or on a category fibered in groupoids over the category of schemes) can be considered also as a category fibered in groupoids over the category of schemes with the natural functor between two categories fibered in groupoids.
 Therefore when a group functor from the category of schemes (which will be in our case the
sheaf of groups that is represented by an ind-scheme) acts on a stack,  we can speak about  an action of this  functor on a line bundle on the stack
 which lifts the action on the stack,
  see, e.g.,~\cite[\S~1]{R}.

\begin{Th}  \label{act-det-st}
  There is a natural action of the group ind-scheme $\widetilde{\G}$ on the determinant line bundle on the moduli stack $\mm_{\rm pr}$ that lifts an action of the group ind-scheme $\G$ on $\mm_{\rm pr}$ constructed in Theorem~\ref{action}.
\end{Th}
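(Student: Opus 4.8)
The plan is to use the Krichever-type description of the determinant of cohomology provided by item~\ref{item3} of Proposition~\ref{cohom} in order to reduce the construction to the relative determinants of $A((t))$-lattices that already govern the determinant central extension~\eqref{det-cent-ext'}. Fix a commutative ring $A$, a proper quintet $(C,p,\ff,t,e)$ over $A$, and an element $g=(h,\vp)\in\G(A)$; write $(C'',p'',\ff'',t'',e'')$ for the proper quintet produced by the action of Theorem~\ref{action}. Via the formal parameter $t$, the trivialization $e$ and isomorphism~\eqref{K-isom}, the $A$-submodule $W=\ff(C\setminus p)\subset A((t))$ is a lattice forming a Fredholm pair with $A[[t]]$: by item~\ref{item3} of Proposition~\ref{cohom} the two-term complex~\eqref{kr-compl} computes $R\pi_*\ff$, so $\det R\pi_*\ff$ is canonically the relative determinant $\det(W\mid A[[t]])$, i.e. $\det(W\cap A[[t]])\otimes_A\det\bigl(A((t))/(W+A[[t]])\bigr)^{-1}$. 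This is the Fredholm-pair extension of the relative determinant of Section~\ref{centr-det-g}, agreeing with it whenever both make sense; concretely it is the determinant of the finite complex~\eqref{compl-AG} computed in~\eqref{det-bund}.

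First I would identify how the lattice $W$ transforms. Unwinding the regluing of Theorem~\ref{action}: the automorphism $\vp$ replaces the formal expansion $\gamma$ by $\vp\gamma$ (see~\eqref{cartesian}), so the lattice attached to $(C',p',\ff',t',e')$ is $\vp(W)$; the subsequent twist by $h$ in~\eqref{diag-sheaf} multiplies sections by $h$, so the lattice attached to $(C'',p'',\ff'',t'',e'')$, read inside $A((t''))\cong A((t))$ via $t''\mapsto t$, is $W''=h\,\vp(W)=g(W)$, where $g$ acts on $A((t))$ by $f\mapsto h\vp(f)$. Consequently $\det R\pi_*\ff''=\det(g(W)\mid A[[t]])$, with the same target $A((t))/A[[t]]$ in the complex.

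Next I would build the isomorphism attached to $(g,s)\in\widetilde\G(A)$. Since $g$ is an $A$-linear automorphism (homeomorphism) of $A((t))$, it carries the Fredholm pair $(W,A[[t]])$ to $(g(W),g(A[[t]]))$ and hence gives a canonical isomorphism $\det(W\mid A[[t]])\xrightarrow{\sim}\det(g(W)\mid g(A[[t]]))$. The generator $s\in\det(g(A[[t]])\mid A[[t]])$ then supplies the missing comparison between $g(A[[t]])$ and $A[[t]]$: by the composition isomorphism~\eqref{iso-tens} with $g_1=g(W)$, $g_2=g(A[[t]])$, $g_3=A[[t]]$, tensoring with $s$ defines $\det(g(W)\mid g(A[[t]]))\to\det(g(W)\mid A[[t]])$. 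The composite $\Phi_{(g,s)}\colon\det R\pi_*\ff\to\det R\pi_*\ff''$ is the desired lift; forgetting $s$ recovers the action of $g$ on $\mm_{\rm pr}$ from Theorem~\ref{action}, and the central element $(1,c)$ with $c\in\GG_m(A)=A^*$ acts by multiplication by $c$, so this is genuinely an action on the determinant line bundle.

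It then remains to verify the axioms. I would check that $\Phi$ respects the twisted group law $(g_1,s_1)(g_2,s_2)=(g_1g_2,\,g_1(s_2)\otimes s_1)$: the equality $\Phi_{(g_1,s_1)}\circ\Phi_{(g_2,s_2)}=\Phi_{(g_1g_2,\,g_1(s_2)\otimes s_1)}$ follows from the associativity of~\eqref{iso-tens} together with the compatibility~\eqref{mult-isom} of the $g$-action with the composition, which is precisely the identity built into the multiplication of $\widetilde\G$ in~\eqref{det-cent-ext'}. Compatibility with base change and independence of the auxiliary choices (the integer $m$, the affine neighbourhood $U$) follow from the corresponding statements in Proposition~\ref{cohom}, in Section~\ref{centr-det-g} and in Theorem~\ref{action}, so that $\Phi$ descends to a well-defined action on the line bundle over the fibered category $\mm_{\rm pr}$. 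The main obstacle is the bookkeeping in the two middle steps: tracking the lattice through the regluing (including the sheaf twist by $h$) to pin down $W''=g(W)$ exactly, and then matching the conventions in the composition isomorphism so that the cocycle identity reproduces the twisted group law of $\widetilde\G$ rather than its inverse or a scalar multiple. These are exactly the points where the definition of $\widetilde\G$ has been engineered to make the determinant-of-cohomology isomorphisms compose.
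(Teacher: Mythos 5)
Your proposal is correct and follows essentially the same route as the paper's own proof: both realize $\det R\pi_*\ff$ via the Krichever complex of item~\ref{item3} of Proposition~\ref{cohom}, identify the lattice attached to the transformed quintet as $g(W)$ with $W=\ff(C\setminus p)$ (the paper's complexes~\eqref{compl-C} and~\eqref{compl-B}), let the generator $s\in\det(g(A[[t]])\mid A[[t]])$ bridge $g(A[[t]])$ and $A[[t]]$, and deduce the twisted group law from the associativity of~\eqref{iso-tens} together with~\eqref{mult-isom}. The one step you assert rather than construct is the mixed isomorphism $\det R\pi_*\ff\otimes_A\det(g(A[[t]])\mid A[[t]])\xrightarrow{\sim}\det R\,g(\pi)_*\,g(\ff)$ --- your ``Fredholm-pair'' use of~\eqref{iso-tens} is an extension beyond its stated scope, since $W$ and $g(W)$ are not of the form $\gamma(A[[t]])$ with $\gamma\in\G(A)$, and your parenthetical formula $\det(W\cap A[[t]])\otimes_A\det\bigl(A((t))/(W+A[[t]])\bigr)^{-1}$ is not literally valid because these cohomology modules need not be projective --- whereas the paper builds exactly this map (its isomorphism~\eqref{det-isom}) by choosing $r$ with $g(t^{-r}A[[t]])\supset A[[t]]$ and passing to the finite projective subcomplexes~\eqref{com1}--\eqref{com2}, which is precisely how your sketch should be completed.
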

\begin{proof}
Let $(C,p,\ff, t, e)$ be a quintet over $A$ with $\pi : C \to \Spec A$.
The determinant line bundle on $\Spec A$ that corresponds to this quintet is determined  by a projective $A$-module  $\det R \pi_* \ff$ (see formula~\eqref{det-bund}).
Let
$\upsilon \, : \, \ff(C \setminus p)  \to A((t))$
 be the natural map (see item~\ref{item3} of Proposition~\ref{cohom}).

Let an element  $g$ be from $\G(A)$.
We denote the new quintet over $A$ after the  action by $g$ as
$(g(C)), g(p), g (\ff), g(t), g(e)) $ with $g(\pi) : g(C) \to \Spec A$. The determinant line bundle on $\Spec A$ that corresponds to the new quintet is determined by a projective $A$-module  $\det R g(\pi)_* g(\ff)$.

Since $g(A((t))) = A((t))$, after the action by $g$ we have that complex~\eqref{kr-compl} for the quintet $(C, p, \ff, g, e)$  is isomorphic to the following complex:
\begin{equation}  \label{compl-C}
\ff(C \setminus p) \stackrel{\Theta_1}{\lrto} A((t))/ g(A[[t]])  \, \mbox{,}
\end{equation}
where the map $\Theta_1$ is the composition of the map $g \upsilon$ with the quotient map from $A((t))$ to $A((t))/ g (A[[t]])$ (here and further $g \upsilon$ is the composition of $\upsilon$ and the action by $g$ on $A((t))$).

From the construction of the quintet
$
(g(C)), g(p), g (\ff), g(t), g(e))
$,
since $$g(\ff)(g(C) \setminus g(p))  \simeq \ff(C \setminus p) \, \mbox{,} $$
we have that complex~\eqref{kr-compl} written for the quintet
$
(g(C)), g(p), g (\ff), g(t), g(e))
$
is isomorphic to the complex
\begin{equation}  \label{compl-B}
\ff(C \setminus p)  \stackrel{\Theta_2}{\lrto} A((t))/ A[[t]]  \, \mbox{,}
\end{equation}
where the map $\Theta_2$ is the composition of the map $g \upsilon$ with the quotient map from $A((t))$ to $A((t))/ A[[t]]$.

From~\cite[Prop.~3.2]{O1} and Proposition~\ref{cohom} it is easy ro see  that there is an integer $r > 0$ such that  $g(t^{-r}A[[t]])  \supset A[[t]]$ and complex~\eqref{compl-C} contains a quasi-isomorphic subcomplex of finitely generated projective $A$-modules
\begin{equation}  \label{com1}
H^0(C, \ff(rp))  \stackrel{\Theta_1}{\lrto} g(t^{-r}A[[t]])/ g(A[[t]])
\end{equation}
that is isomorphic to a complex~\eqref{compl-AG},
and complex~\eqref{compl-B} contains a quasi-isomorphic subcomplex of finitely generated projective $A$-modules
 \begin{equation}  \label{com2}
H^0(C, \ff(rp))  \stackrel{\Theta_2}{\lrto} g(t^{-r}A[[t]])/ A[[t]]
\end{equation}
that contains a quasi-isomorphic subcomplex~\eqref{compl-AG} written for the quintet \linebreak $(g(C), g(p), g (\ff), g(t), g(e))$.

From complexes~\eqref{com1}-\eqref{com2} we obtain that there are canonical isomorphisms
\begin{multline*}
\det R \pi_* \ff \simeq  \Hom\nolimits_A  \left( \bigwedge^{\rm max} ( g(t^{-r}A[[t]])/ g( A[[t]])), \bigwedge^{\rm max} H^0(C, \ff(rp)   \right) = \\ =
\Hom\nolimits_A \left(\det  ( g(A[[t]])  \mid   g(t^{-r}A[[t]])), \bigwedge^{\rm max} H^0(C,  \ff(rp))        \right)  \mbox{,}
\end{multline*}

\begin{multline*}
\det R g(\pi)_* g(\ff) \simeq  \Hom\nolimits_A  \left( \bigwedge^{\rm max} (g(t^{-r}A[[t]])/  A[[t]]), \bigwedge^{\rm max} H^0(C, \ff(rp))   \right) = \\ =
\Hom\nolimits_A \left(\det  ( A[[t]]  \mid   g(t^{-r}A[[t]])), \bigwedge^{\rm max} H^0(C,  \ff(rp))        \right) \mbox{.}
\end{multline*}

From~\eqref{iso-tens} we have
$$
\det  ( g(A[[t]])  \mid   g(t^{-r}A[[t]])) \simeq  \det(g(A[[t]])  \mid A[[t]])  \otimes_A   \det  ( A[[t]]  \mid   g(t^{-r}A[[t]])) \, \mbox{.}
$$

Hence we  obtain a canonical isomorphism
\begin{equation}  \label{det-isom}
\det R \pi_* \ff \otimes_A     \det(g(A[[t]])  \mid A[[t]])    \xrightarrow{\sim}  \det R g(\pi)_* g(\ff)
\end{equation}
such that the following diagram of isomorphisms is commutative for any $g_1$ and $g_2$ from~$\G(A)$

$$
 {\xymatrix{
   \det R g_1 g_2(\pi)_* g_1g_2(\ff)
    &
    \det R \pi_* (\ff) \otimes_A \det(g_1 g_2 (A[[t]])  \mid  A[[t]]) \ar[l] \\
    \det R  g_2(\pi)_* g_2(\ff)  \otimes_A  \det(g_1 ( A[[t]])   \mid   A[[t]]) \ar[u] &
   P  \ar[l] \ar[u]
    }}
 $$
where
$$ P = \det  R  \pi_* \ff  \otimes_A   \det(g_1 g_2 ( A[[t]])   \mid  g_1(A[[t]]))
       \otimes_A \det(g_1 ( A[[t]])   \mid A[[t]]) \, \mbox{.}$$

Now using the definition of~$\widetilde{\G}$ from \S~\ref{centr-det-g}  and the compatibility of isomorphism~\eqref{det-isom}  with  base change, we finish the proof of the theorem.
\end{proof}

\section{$2$-cocycle on $\G$ obtained from $\cup$-products of $1$-cocyles}

\label{2-cocycles}

We will construct a $2$-cocycle on $\G$ by means of products of   $\cup$-products of $1$-cocyles on $\G$ and the Contou-Carr\`{e}re symbol $\CC$.
We also calculate the corresponding  Lie algebra $2$-cocycle on $\Lie \G$.

\subsection{Contou-Carr\`{e}re symbol}  \label{CC-symb}

We recall the definition of the Contou-Carr\`{e}re symbol (see~\cite{CC1}, \cite[\S~2.9]{D2}, \cite[\S~2]{OZ}):
$$
\CC \,  :  \, L\GG_m  \times L \GG_m \lrto L \GG_m \otimes L \GG_m \lrto \GG_m  \, \mbox{.}
$$

Let $A$ be any commutative ring and  $\underline{n}  \in \underline{\dz}(A) $.
The element $\underline{n}$ defines a decomposition $A = A_1 \times \ldots \times A_k$
into the finite direct product of rings such that $\underline{n}$ restricted to every $\Spec A_l$ equals a constant function with value $n_l \in \dz$.
Now
for any $a \in A^*$
we define the element $a^{\underline{n}} = a^{n_1} \times \ldots \times a^{n_k}$ from $ A^*$.

We define a  free $A((t))$-module of rang $1$:
\begin{equation}  \label{form-ker}
\widetilde{\Omega}^1_{A((t))}  = \Omega^1_{A((t))}  / N  \, \mbox{,}
\end{equation}
where $\Omega^1_{A((t))}$ is the $A((t))$-module of absolute K\"ahler differentials, the  $A((t))$-submodule $N$ is generated by all elements
 $df -  f' dt$, where $f \in A((t))$ and $f'= \frac{\partial{f}}{\partial{t}}$. It is clear  that $N$ contains elements $da$, where $a \in A$, and  that $dt$ is a basis of the $A((t))$-module~$\widetilde{\Omega}^1_{A((t))}$.

We define the  residue
$$
\res \; : \;  \Omega^1_{A((t))}  \lrto \widetilde{\Omega}^1_{A((t))}   \lrto A
$$
 where the first map is the natural map, and the second map is the map $\sum_{i \in \dz} a_it^i dt   \mapsto a_{-1}$.

The  Contou-Carr\`{e}re symbol $\CC$ is a bimultiplicative antisymmetric morphism. It has the following additional properties. Let  ${f,g  \in A((t))^*}$.

\begin{enumerate}
\item  If $a \in A^*$, then ${\CC}(a, g ) = a^{\nu(g)}$.
\item   $\CC(t,t)=-1$.
\item  If $\Q \subset A$ and $ f \in \vpl(A) \times \vmi(A) $, then
\begin{equation}  \label{CC-exp-log-1}
\CC(f,g) = \exp \res \left(\log f \cdot \frac{dg}{g} \right)  \, \mbox{,}
\end{equation}
where  $\exp (x)$ and $\log(1+y)$ are the usual formal series, the series $\log$ in above formula  converges in the topology of $A(((t))$,
and application of series $\exp$ in above formula makes sense, because  $\res \left(\log f \cdot \frac{dg}{g} \right)  \in {\rm Nil }(A)$.
\end{enumerate}

After restriction to commutative $\Q$-algebras,  the Contou-Carr\`{e}re symbol $${\CC}_{\Q} \, : \,  {L\GG_m}_{\Q}  \times {L \GG_m}_{\Q}  \lrto {\GG_m}_{\Q} $$
 is uniquely defined by the above properties.

For an arbitrary $A$, any $f, g \in A((t))^*$ there are  unique decompositions:
$$
f = \prod_{i < 0} (1 - a_it^i) \cdot a_0 \cdot t^{\nu(f)}  \cdot \prod_{i > 0} (1 - a_it^i) \,
\mbox{,}  \qquad
g = \prod_{j < 0} (1 - b_jt^j) \cdot b_0 \cdot t^{\nu(g)}  \cdot \prod_{j > 0} (1 - b_jt^j)   \, \mbox{,}
$$
where $a_0, b_0 \in A^*$, $a_i, b_j \in {\rm Nil}(A) $ when $i, j <0$, and the products over negative $i$ and over negative $j$ are finite products.
Now the Contou-Carr\`{e}re symbol is defined by  the following formula:
\begin{equation}  \label{CC-form-1}
\CC(f,g)= (-1)^{\nu(f)  \nu(g)}  \frac{a_0^{\nu(g)} \prod_{i > 0} \prod_{j>0 }
 \left(1 - a_i^{j /(i,j)} b_{-j}^{i/(i,j)} \right)^{(i,j)}  }{b_0^{\nu(f)}
 \prod_{i > 0} \prod_{j>0 }
 \left(1 - a_{-i}^{j /(i,j)} b_{j}^{i/(i,j)} \right)^{(i,j)}
  }  \, \mbox{,}
\end{equation}
where the products in  the numerator and denominator actually consist of a finite number of factors, therefore the formula makes sense. (Formula~\eqref{CC-form-1} can be obtained, using above properties and by application of formula~\eqref{CC-exp-log-1} to elements $1 - a_it^i$ and $1 - b_jt^j$.)

We recall, see~\eqref{md}, that we have embeddings $\GG_m \times \vpl  \hookrightarrow L\GG_m$
and $\GG_m \times \vmi  \hookrightarrow L\GG_m$. From above formulas for   the  Contou-Carr\`{e}re symbol $\CC$ it follows that
\begin{equation}  \label{CC-ident}
\CC \mid_{(\GG_m \times \vpl) \times (\GG_m \times \vpl)} =1  \qquad \mbox{and} \qquad
\CC \mid_{(\GG_m \times \vmi) \times (\GG_m \times \vmi)} =1
   \mbox{.}
\end{equation}

\bigskip

It is important for application of the  Contou-Carr\`{e}re symbol $\CC$ in this article that $\CC$ is invariant under diagonal action of the group functor $\AutL$
 on the commutative group functor $L\GG_m \times L \GG_m$, see~\cite[\S~2]{OZ}, \cite{GO3}.

\subsection{$\cup$-products of $1$-cocyles on $\G$ and the corresponding Lie algebra $2$-cocycles}
\label{prod-coc}

Since $L\GG_m$ is an $\AutL$-module, from the natural morphism of group functors ${\G  \to \AutL}$
we obtain that
$L\GG_m$ is also a $\G$-module.

 Besides, the   Contou-Carr\`{e}re symbol $\CC$ is a morphism of $\G$-modules \linebreak ${L \GG_m \otimes L \GG_m  \to \GG_m}$,
where we consider the trivial action of $\G$ on $\GG_m$. Hence, the following definition is well-defined.

\begin{defin}
For any two  $1$-cocycles $\lambda_1$ and $\lambda_2$ on the group functor $\G$
with coefficients in the $\G$-module $L \GG_m$ we define  the  $2$-cocycle
$$\langle  \lambda_1 , \lambda_2 \rangle = \CC \circ (\lambda_1  \cup \lambda_2 ) $$
  on the group functor  $\G$ with coefficients in the trivial $\G$-module  $\GG_m$, where
   $\circ$ means the composition of morphisms of functors.
\end{defin}

\begin{defin}  \label{def-1-coc}
We define  $1$-cocycles $\Lambda$ and $\Omega$
 on the group functor  $\G$ with coefficients in the $\G$-module $ L\GG_m$:
 $$
 \Lambda ((h, \vp)) =h \, \mbox{,} \qquad \Omega( (h, \vp) ) = \widetilde{\vp}' = d \vp(t)/ dt \,  \mbox{,}
 $$
where $ (h, \vp) \in \G(A) $ for any commutative ring $A$.
\end{defin}
It is easy to see that Definition~\ref{def-1-coc} is well-defined, i.e.  $\Lambda$ and $\Omega$ are $1$-cocycles.

\begin{nt}  \label{univers}
\em

The $1$-cocycle $\Lambda$ is universal in the following sense. Let $\lambda$ be any $1$-cocycle on $\G$ with values in the $\G$-module  $L\GG_m$.
We define the new morphism of {\em group functors}
$$ \Phi_{\lambda} \, :  \, \G \lrto \G \, \mbox{,}
\qquad \qquad
\Phi_{\lambda} ((h, \vp)) = (\lambda(h, \vp), \vp)  \, \mbox{,}
$$
where $(h, \vp) \in \G(A) $ for any commutative ring $A$. Now under inverse image we have $1$-cocycles $\Phi_{\lambda}^*(\Lambda) = \lambda$,
$\Phi_{\lambda}^* (\Omega) = \Omega$, and correspondingly we have $2$-cocycle ${\Phi_{\lambda}^* (\langle  \lambda_1 , \lambda_2 \rangle) =
\langle  \Phi_{\lambda}^* (\lambda_1) ,   \Phi_{\lambda}^* (\lambda_2)    \rangle }$.
\end{nt}

\bigskip

Any $2$-cocycle  $\Upsilon$ on $\G$ with coefficients in the trivial $\G$-module $\GG_m$ induces the Lie algebra $2$-cocycle (see~\cite[Appendix A.3]{O1})
$$
\Lie \Upsilon \, : \, \Lie \G \times \Lie \G  \lrto \GG_a  \, \mbox{,}
$$
where $\GG_a =\Lie \GG_m $, $\GG_a (A) =A$ for any commutative ring $A$.
We take  elements $d_i $ from $ \Lie \G (A)  \subset  \G (A[\ve_i]/ (\ve_i^2)) $
 and consider these elements as  $d_i \in \G(A[\ve_1, \ve_2] / (\ve_1^2, \ve_2^2))$ through the natural embeddings
 $\G (A[\ve_i]/ (\ve_i^2))  \hookrightarrow \G(A[\ve_1, \ve_2] / (\ve_1^2, \ve_2^2))$, where $i=1$ and $i=2$.
  Now we have that an element
\begin{equation}  \label{Lie-coc}
\Lie \Upsilon (d_1, d_2) = \Upsilon(d_1,d_2) \Upsilon(d_2, d_1)^{-1}  \in  (A[\ve_1, \ve_2] / (\ve_1^2, \ve_2^2))^*
\end{equation}
is the image of the element from $A$ as $a \mapsto 1 + a \ve_1 \ve_2$.

We recall Remark~\ref{tng-Lie} and Proposition~\ref{prop-iso-Lie} for the description of $\Lie \G(A)$. We obtain a proposition.
\begin{prop} \label{Lie-cup}
Let $A$  be any commutative ring. For any elements $s_i, r_i$ from $A((t))$, where $i=1$ and $i=2$, we have explicit expressions for the Lie algebra $2$-cocycles:
\begin{gather}
\label{first-form}
\Lie \langle \Lambda, \Lambda  \rangle \left(s_1 + r_1 \frac{\partial}{\partial t}, s_2 + r_2 \frac{\partial}{\partial t} \right) = 2 \res (s_1 d s_2) \, \mbox{,}\\
\label{sec-form}
\Lie \langle \Lambda, \Omega  \rangle \left(s_1 + r_1 \frac{\partial}{\partial t}, s_2 + r_2 \frac{\partial}{\partial t} \right) =
\res (s_1 d r_2' - s_2 dr_1')  \, \mbox{,} \\
\label{third-form}
 \Lie \langle \Omega, \Omega  \rangle \left(s_1 + r_1 \frac{\partial}{\partial t}, s_2 + r_2 \frac{\partial}{\partial t} \right) = 2 \res (r_1' d r_2')  \, \mbox{.}
\end{gather}
\end{prop}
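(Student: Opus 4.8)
The plan is to unwind all the definitions and reduce the three statements to a single residue computation in the ring $R=A[\ve_1,\ve_2]/(\ve_1^2,\ve_2^2)$. By Proposition~\ref{prop-iso-Lie} and Remark~\ref{tng-Lie}, the element of $\Lie\G(A)$ corresponding to $s_i+r_i\frac{\partial}{\partial t}$ is the pair $d_i=(1+s_i\ve_i,\vp_i)\in\G(A[\ve_i]/(\ve_i^2))$ with $\vp_i(t)=t+r_i\ve_i$. Since $\G$ acts on the module $L\GG_m$ only through its projection to $\AutL$, for $g_1=(h_1,\vp_1)$ one has $g_1(f)=\vp_1(f)=f\circ\vp_1(t)$, so writing out the $\cup$-product and the definition of $\langle\lambda_1,\lambda_2\rangle$ gives, for any two $1$-cocycles, $\langle\lambda_1,\lambda_2\rangle(g_1,g_2)=\CC\bigl(\lambda_1(g_1),\,\vp_1(\lambda_2(g_2))\bigr)$. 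First I would record $\Lambda(d_i)=1+s_i\ve_i$ and $\Omega(d_i)=d\vp_i(t)/dt=1+r_i'\ve_i$, and then compute the twisted second argument by substituting $t\mapsto t+r_1\ve_1$ and Taylor-expanding to first order; for instance $\vp_1(\Omega(d_2))=1+r_2'\ve_2+r_1r_2''\ve_1\ve_2$, and analogously for the other slots.

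Next I would evaluate the Contou-Carr\`ere symbol on these elements. In each case the first argument has the shape $1+u\ve_i$ with $u\in A((t))$, hence lies in $\vpl(R)\times\vmi(R)$, so property~\eqref{CC-exp-log-1} applies; moreover $(u\ve_i)^2=0$ forces $\log(1+u\ve_i)=u\ve_i$ with no further terms, and $\exp$ of the resulting $\ve_1\ve_2$-multiple of a residue truncates after one term as well, so no division by integers ever occurs. To justify using~\eqref{CC-exp-log-1} over an arbitrary ring I would note that both sides of each identity are universal expressions with integer coefficients in the Laurent coefficients of $s_i,r_i$ (the left-hand sides via formula~\eqref{CC-form-1}); an equality valid for all $\Q$-algebras is therefore valid for all commutative rings. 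A convenient simplification is that the $\G$-twist is immaterial at this order: its only effect is the cross term $\propto\ve_1\ve_2$ in the second slot of $\CC$, which is annihilated upon multiplication by the order-$\ve_1$ logarithm of the first slot. Keeping the $\ve_1\ve_2$-part of $\log f\cdot dg/g$ then yields $\langle\Lambda,\Lambda\rangle(d_1,d_2)=1+\res(s_1\,ds_2)\ve_1\ve_2$, $\langle\Lambda,\Omega\rangle(d_1,d_2)=1+\res(s_1\,dr_2')\ve_1\ve_2$ and $\langle\Omega,\Omega\rangle(d_1,d_2)=1+\res(r_1'\,dr_2')\ve_1\ve_2$, together with the analogous expressions obtained by swapping $d_1$ and $d_2$.

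Finally I would assemble the Lie algebra cocycles through formula~\eqref{Lie-coc}, that is $\Lie\Upsilon(d_1,d_2)=\Upsilon(d_1,d_2)\,\Upsilon(d_2,d_1)^{-1}$, reading off the coefficient of $\ve_1\ve_2$. For $\langle\Lambda,\Omega\rangle$ this directly produces the asymmetric answer $\res(s_1\,dr_2'-s_2\,dr_1')$ of~\eqref{sec-form}. For the symmetric pairings $\langle\Lambda,\Lambda\rangle$ and $\langle\Omega,\Omega\rangle$ I would invoke integration by parts, $\res(d(fg))=0$ and hence $\res(f\,dg)=-\res(g\,df)$, to fold the two residues into a single doubled one, giving $2\res(s_1\,ds_2)$ and $2\res(r_1'\,dr_2')$ as in~\eqref{first-form} and~\eqref{third-form}. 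I expect the only genuinely delicate point to be the nilpotent bookkeeping: one must verify that every $\ve_1\ve_2$-contribution arising from the substitution $t\mapsto t+r_1\ve_1$ inside $\CC$ is correctly accounted for, and in particular confirm that the cross terms are killed by $\ve_1^2=\ve_2^2=0$ rather than feeding back into the residue after the logarithm is taken; this is the step that must be carried out most attentively.
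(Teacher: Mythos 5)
Your proposal is correct and takes essentially the same approach as the paper's proof: both reduce each pairing to Contou-Carr\`ere evaluations of the form $\CC(1+u\ve_1,\,1+v\ve_2)=1+\res(u\,dv)\,\ve_1\ve_2$ via the exp-res-log formula~\eqref{CC-exp-log-1}, dispose of the twist by $\vp_1$ by noting that its $\ve_1\ve_2$ cross term in the second slot contributes trivially (the paper phrases this as $\CC(1+f_1\ve_i,\,1+f_2\ve_1\ve_2)=1$), and then antisymmetrize via~\eqref{Lie-coc}, folding the two residues into one by integration by parts in the symmetric cases. The only deviations are minor: the paper quotes Proposition~4.1 of~\cite{O1} for $\langle\Omega,\Omega\rangle$ instead of recomputing it, and it leaves the passage from $\Q$-algebras to arbitrary commutative rings implicit (deferring to the reasoning in~\cite{O1}), whereas you handle that point explicitly with an integrality/universality argument.
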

\begin{proof}
The $2$-cocycle $\langle \Omega, \Omega  \rangle $ is the formal Bott-Thurston cocycle from Proposition~2.3 of~\cite{O1}. Then formula~\eqref{third-form} is the statement of Proposition~4.1 from~\cite{O1}.

Formulas~\eqref{first-form}-\eqref{sec-form}
follow by direct calculations with formulas~\eqref{Lie-coc} and~\eqref{CC-exp-log-1}-\eqref{CC-form-1},     by the similar reasonings as in the proof of Proposition~4.1 from~\cite{O1}. Indeed,
  we use that for any $f_1, f_2$ from $A((t))$ and $i=1$, $i=2$ we have for the  Contou-Carr\`{e}re symbol  $\CC$ in the ring $E((t))$, where $E = A[\ve_1, \ve_2] / (\ve_1^2, \ve_2^2)$, that
 $$
 \CC( 1 + f_1 \ve_i, 1 + f_2 \ve_1 \ve_2) =1  \, \mbox{.}
 $$
 Then
 the calculations in case of formula~\eqref{first-form} lead to the following equalities  using the  ring $E((t))$
\begin{multline*}
\CC(1 + s_1 \ve_1, 1+ s_2 \ve_2) \CC(1 + s_2 \ve_2, 1+ s_1 \ve_1)^{-1}= \\ = (1 + \res (s_1 ds_2) \ve_1 \ve_2 ) (1 + \res (s_2 ds_1) \ve_1 \ve_2 )^{-1}=
1 + 2 \res (s_1 ds_2) \ve_1 \ve_2 \, \mbox{.}
\end{multline*}
And in case of formula~\eqref{first-form} we obtain
\begin{multline*}
\CC(1 + s_1 \ve_1, 1+ r_2' \ve_2) \CC(1 + s_2 \ve_2, 1+ r_1' \ve_1)^{-1}=  \\ = (1 + \res (s_1 dr_2') \ve_1 \ve_2 ) (1 + \res (s_2 dr_1') \ve_1 \ve_2 )^{-1}=
1 +  \res (s_1 dr_2' - s_2 dr_1') \ve_1 \ve_2 \, \mbox{.}
\end{multline*}

\end{proof}

\begin{nt}  \label{Lie-dim} \em
Let $ A=k$ be a field of characteristic zero. By similar reasonings as in the proof of Proposition~(2.1) from~\cite{ADKP} we can see that the continuous Lie algebra cohomology $H^2_{\rm c}(\Lie \G(k), k)$ is a three-dimensional vector space over $k$ with the basis given by the {$2$-cocycles} from
formulas~\eqref{first-form}-\eqref{third-form}.
\end{nt}

\section{Comparison of central extensions of $\G$}

\label{compar}

We will prove a local analog of the Deligne-Riemann-Roch isomorphism~\eqref{DRR1}. This is the comparison of two central extensions of $\G_{\Q}$ by ${\GG_m}_{\Q}$. The first central extension is the determinant central extension of $\G$ from Section~\ref{main-centr}, and the second central extension is given  by some products of $2$-cocycles constructed in Section~\ref{prod-coc}. At first, we prove the corresponding result for the Lie algebra $2$-cocycles. We will use also the theory of infinitesimal formal groups over $\Q$.

\subsection{Comparison of Lie algebra $2$-cocycles}

We recall that by Proposition~\ref{prop-det}  the determinant central extension of $\G^0$ admits a natural section $\G^0  \to \widetilde{\G^0}$  (as functors) with the corresponding explicit $2$-cocycle $D$  given by formula~\eqref{coc-D-2}.

Let $A$ be any commutative ring.
By Proposition~\ref{prop-iso-Lie} any element $z $ from $\Lie \G^0(A)$ naturally acts on the $A$-module $A((t))$ by continuous $A$-endomorphisms. We write this action as a block matrix (cf.~formula~\eqref{matr'})
\begin{equation}
\begin{pmatrix}
a_z & b_z  \\
c_z & d_z
\end{pmatrix}
\end{equation}
with respect to the decomposition $A((t)) = t^{-1} A[t^{-1}]  \oplus A[[t]]$, and where the matrix acts on  elements-columns from $A((t))$
on the left.

The following proposition is an analog of Proposition~4.2  from~\cite{O1} for $\AutL$ and also a formal analog of Proposition 6.6.5 from~\cite{PS} from the theory of smooth loop groups.

\begin{prop}  \label{Lie-D}
For any commutative ring $A$ and any elements $z,w$ from $\Lie \G^0(A)$ we have
\begin{equation} \label{form-Lie-D}
\Lie D (z,w) = \tr (c_w b_z - c_z b_w)  \, \in A \, \mbox{,}
\end{equation}
where the trace of the $A$-linear map $A[[t]] \to A[[t]]$ is well-defined (because there is an integer $n \ge 0$ such that
$b_z \mid_{t^n A[[t]]} = b_w \mid_{t^nA[[t]]]} = 0$).
\end{prop}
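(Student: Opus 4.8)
The plan is to compute the Lie algebra 2-cocycle $\Lie D$ directly from its definition~\eqref{Lie-coc}, using the explicit formula~\eqref{coc-D-2} for the group 2-cocycle $D$, namely $D(x,y) = \det(d_x \cdot d_y \cdot d_{xy}^{-1})$. First I would take two elements $z, w \in \Lie\G^0(A)$, realize them as $z = (1 + z\ve_1, \ldots)$ and $w = (1 + w\ve_2, \ldots)$ inside $\G^0(A[\ve_1,\ve_2]/(\ve_1^2,\ve_2^2))$, and write the action of each on $A((t))$ as $\id + (\text{block matrix})\ve_i$ with respect to the decomposition $A((t)) = t^{-1}A[t^{-1}] \oplus A[[t]]$. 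The key point, following Proposition~\ref{prop-iso-Lie}, is that $z$ and $w$ act by continuous $A$-endomorphisms, so their lower-right blocks $d_z, d_w$ and off-diagonal blocks $b_z, c_z, b_w, c_w$ are genuine $A$-linear maps, with $b_z$ and $b_w$ having image in a finite-dimensional space (hence the trace in~\eqref{form-Lie-D} makes sense).

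Next I would expand the group cocycle $D(z,w) = \det(d_z \cdot d_w \cdot d_{zw}^{-1})$ to second order in $\ve_1\ve_2$. Writing the group element corresponding to $z$ as an operator $1 + M_z\ve_1$ on $A((t))$ (and similarly for $w$), the lower-right block $d$ of a product is governed by $d_{zw} = c_z b_w + d_z d_w$ (recall the formula just after~\eqref{coc-D-2}), so the relevant correction to $d_z d_w d_{zw}^{-1}$ is the term $c_z b_w$ appearing at order $\ve_1\ve_2$. Using $\det(\id + N) = 1 + \tr N + O(N^2)$ for an operator $N$ that vanishes on $t^nA[[t]]$, I would extract the coefficient of $\ve_1\ve_2$ in $D(z,w)$ as a trace of the form $\tr(c_z b_w)$ up to sign conventions. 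Then $\Lie D(z,w) = D(z,w)D(w,z)^{-1}$, by~\eqref{Lie-coc}, antisymmetrizes this expression and yields $\tr(c_w b_z - c_z b_w)$.

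The main obstacle, and the step requiring the most care, is the bookkeeping of which blocks contribute at order $\ve_1\ve_2$ when composing the operators and inverting $d_{zw}$: one must verify that the diagonal blocks $a$ and $d$ contribute only terms that cancel in the antisymmetrization (or vanish because $\Lie\GG_m$ is abelian), so that the surviving contribution is precisely the cross term $\tr(c\, b)$ between the upper-right and lower-left corners. I would control this by expanding $d_{zw}^{-1} = (d_z d_w + c_z b_w)^{-1}$ to first order as $d_w^{-1}d_z^{-1}(\id - c_z b_w d_w^{-1}d_z^{-1} + \cdots)$, observing that $d_z = \id + (\cdots)\ve_1$ and $d_w = \id + (\cdots)\ve_2$ so their product differs from $\id$ only at orders $\ve_1$ and $\ve_2$ separately, whence the only genuinely mixed $\ve_1\ve_2$ contribution to $\det(d_z d_w d_{zw}^{-1})$ comes through the trace of $c_z b_w$. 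The finiteness justification for the trace follows exactly as stated: since $b_z$ and $b_w$ are the off-diagonal blocks of operators coming from $\Lie\G^0(A)$, they annihilate $t^nA[[t]]$ for $n$ large, so $c_w b_z - c_z b_w$ is a finite-rank endomorphism of $A[[t]]$ and its trace is well-defined. This computation is the direct analog of Proposition~4.2 of~\cite{O1} and of Proposition~6.6.5 of~\cite{PS}, and I would follow those references for the remaining routine verifications.
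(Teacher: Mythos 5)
Your proposal is correct and follows essentially the same route as the paper's own proof: evaluate the explicit cocycle $D(x,y)=\det(d_x d_y d_{xy}^{-1})$ on $x=\id+z\ve_1$, $y=\id+w\ve_2$, observe that $d_{xy}=c_zb_w\ve_1\ve_2+d_xd_y$ so that $d_xd_yd_{xy}^{-1}=\id-c_zb_w\ve_1\ve_2$, whence $D(x,y)=1-\tr(c_zb_w)\ve_1\ve_2$, and then antisymmetrize via~\eqref{Lie-coc}. The paper states this computation in one line; your expansion of $d_{xy}^{-1}$ and the finite-rank justification for the trace are exactly the details it leaves implicit.
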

\begin{proof}
By formulas~\eqref{coc-D-2} and~\eqref{Lie-coc} we have to calculate over the ring $A[\ve_1, \ve_2]/ (\ve_1^2, \ve_2^2)$:
\begin{multline*}
D (\id + z \ve_1, \id + w \ve_2) D(\id + w \ve_2, \id + z \ve_1)^{-1} = \det (\id - c_z b_w \ve_1 \ve_2)
\det(\id - c_w b_z \ve_1 \ve_2)^{-1} = \\ =
(1 - \tr(c_z b_w) \ve_1 \ve_2 )(1 - \tr(c_w b_z) \ve_1 \ve_2 )^{-1} = 1 + \tr (c_w b_z - c_z b_w) \ve_1 \ve_2  \, \mbox{.}
\end{multline*}
Here $\id$ is the identity map.
\end{proof}

\bigskip

\begin{Th} \label{main-Lie}
We have an equality between Lie algebra $2$-cocycles on $\Lie \G^0$ with coefficients in $\GG_a$
\begin{equation}  \label{Lie-RR}
12 \Lie D = 6 \Lie \langle \Lambda, \Lambda \rangle  - 6 \Lie \langle \Lambda, \Omega \rangle  + \Lie \langle \Omega, \Omega \rangle   \, \mbox{.}
\end{equation}
\end{Th}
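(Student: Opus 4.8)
The plan is to verify the identity~\eqref{Lie-RR} directly from the two explicit descriptions already at our disposal: the residue formulas~\eqref{first-form}--\eqref{third-form} for the three cup-product cocycles (Proposition~\ref{Lie-cup}) and the trace formula~\eqref{form-Lie-D} for $\Lie D$ (Proposition~\ref{Lie-D}). Both sides of~\eqref{Lie-RR} are antisymmetric $A$-bilinear forms on $\Lie \G^0(A) = A((t)) \oplus A((t))\frac{\partial}{\partial t}$, and in each case the value on a pair of arguments is a finite sum of products of finitely many Laurent coefficients; hence both forms are continuous and are determined by their values on the topological generators $t^n$ and $t^m\frac{\partial}{\partial t}$ with $n,m \in \Z$. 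So it suffices to check~\eqref{Lie-RR} when $z$ and $w$ each run over these generators, and I would organize the check according to the three types of pairs, which conveniently isolate the three terms on the right: a pair of multiplication operators $t^a,t^b$ makes the $\Omega$-parts of~\eqref{sec-form}--\eqref{third-form} vanish and feeds only $\Lie\langle\Lambda,\Lambda\rangle$; a pair of vector fields $t^a\frac{\partial}{\partial t},t^b\frac{\partial}{\partial t}$ feeds only $\Lie\langle\Omega,\Omega\rangle$; and a mixed pair feeds only $\Lie\langle\Lambda,\Omega\rangle$.

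For each type I would write the action on $A((t))$ in the basis $\{t^n\}$, read off the off-diagonal blocks $b$ and $c$ relative to $A((t)) = t^{-1}A[t^{-1}]\oplus A[[t]]$, and compute the trace in~\eqref{form-Lie-D}. For two multiplication operators, $b_{t^a}$ and $c_{t^a}$ are the truncations of multiplication by $t^a$; the composite $c_{t^b}b_{t^a}$ has a diagonal part only when $a+b=0$, and its trace is then the cardinality of a range of indices. This is the ``finite sum of constants'' regime and it gives $\Lie D(t^a,t^b)=\res(t^a\,d t^b)$, so that $12\,\Lie D = 6\,\Lie\langle\Lambda,\Lambda\rangle$ on this sector. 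For a mixed pair $t^a$ and $t^b\frac{\partial}{\partial t}$, the same bookkeeping produces a nonzero trace only when $a+b=1$, and the trace is now a sum $\sum_m (m+a)$ over a range of consecutive $m$, a finite arithmetic progression; summing it yields $\Lie D\!\left(t^a,\,t^b\tfrac{\partial}{\partial t}\right) = -\tfrac12\,\res\!\left(t^a\,d (t^b)'\right)$. Extending bilinearly and using the antisymmetry of $\Lie D$ to rewrite the term $\Lie D\!\left(r_1\tfrac{\partial}{\partial t},s_2\right)$, the mixed contributions assemble to $-\tfrac12\,\res(s_1\,dr_2' - s_2\,dr_1')$, which after multiplication by $12$ reproduces exactly the $-6\,\Lie\langle\Lambda,\Omega\rangle$ term of~\eqref{sec-form}.

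The remaining sector, two vector fields $t^a\frac{\partial}{\partial t},t^b\frac{\partial}{\partial t}$, is precisely the comparison already carried out in~\cite{O1} (Proposition~\ref{Lie-D} being the stated analog of Proposition~4.2 there, and~\eqref{third-form} being Proposition~4.1 there): on the vector-field part one has $12\,\Lie D = \Lie\langle\Omega,\Omega\rangle$. Concretely this is the computation in which the trace is a sum of a quadratic function of the index over a range of consecutive integers, producing by the Faulhaber-type summation the familiar cubic (Virasoro/Bott--Thurston) expression in $a$ under the constraint $a+b=2$. Adding the three sectors gives~\eqref{Lie-RR}, since the coefficients $6$, $-6$, $1$ emerge sector by sector.

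The genuinely new input beyond~\cite{O1} is the multiplication sector and the mixed sector, and the main obstacle is the careful bookkeeping of index ranges and signs in the block-matrix traces, in particular keeping exact track of which values of $a,b$ make $b_{t^a}$, $c_{t^a}$, $b_{t^b\frac{\partial}{\partial t}}$ and $c_{t^b\frac{\partial}{\partial t}}$ nonzero, and then confirming that the resulting finite sums agree with the residue formulas in every sign regime (including the degenerate cases $a=0$ or $b\in\{0,1\}$). Since the trace and residue expressions have integer coefficients and are valid over an arbitrary commutative ring $A$, no passage to $\Q$ or to reduced rings is needed at this Lie-algebra level; the reduction to $\Q$ enters only later, when these Lie-algebra identities are lifted to the central extensions of $\G_{\Q}$.
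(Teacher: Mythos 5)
Your proposal is correct and follows essentially the same route as the paper's proof: both use continuity/bilinearity of the cocycles from Propositions~\ref{Lie-cup} and~\ref{Lie-D} to reduce to the monomials $t^n$ and $t^m\frac{\partial}{\partial t}$, then compare the block-matrix traces (a constant sum, a finite arithmetic progression, and a quadratic sum over a finite index range) with the residue formulas sector by sector, and your intermediate closed forms $\Lie D(t^a,t^b)=\res(t^a\,dt^b)$ and $\Lie D\left(t^a,t^b\frac{\partial}{\partial t}\right)=-\frac{1}{2}\res\left(t^a\,d(t^b)'\right)$ agree with the paper's Kronecker-delta expressions. The only inessential difference is that you delegate the vector-field sector to~\cite{O1}, whereas the paper recomputes that case explicitly.
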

\begin{proof}
We will use Propositions~\ref{Lie-cup}  and~\ref{Lie-D}.

We fix a commutative ring $A$. There is a natural topology on the $A$-module $\Lie \G^0(A) = A((t))  + A((t)) \frac{\partial}{\partial t}$.
It is easy to see that $2$-cocycles given by formulas~\eqref{first-form}-\eqref{third-form} are continuous in each argument when we consider the discrete topology on $A$. Besides, $\Lie D$ is also continuous, since   $\tr (c_w b_z)$  and $\tr (c_z b_w)$ are continuous in each argument (see formula~\eqref{form-Lie-D}).

Therefore it is enough to check equality~\eqref{Lie-RR} on elements of type $t^i$ and $t^{j+1} \frac{\partial}{\partial t}$ from $\Lie \G^0(A)$.
We will use the Kronecker delta $\delta_{r,s}$ that is equal to $1$ when $r =s$ and is equal to $0$ otherwise. We have the following cases (using also that Lie algebra cocycles are antisymmetric).

\smallskip

{\em The first case}. We calculate
$$
12 \Lie D \left(t^n, t^m \right) = 12 \tr \left( c_{t^m} b_{t^n} - c_{t^n} b_{t^m}  \right) = 12 \left( \sum_{l=0}^{m-1} 1 \right) \delta_{m, -n} = 12 m \, \delta_{m, -n}  \,  \mbox{.}
$$
Here the result is equal to the first summand  $12 \tr( c_{t^m} b_{t^n})$ when $m > 0$ and it is equal to the second summand $12(- c_{t^n} b_{t^m})$   when $m \le 0$ (the other summands are equal to zero).

The right hand side of~\eqref{Lie-RR} applied to the elements $t^n$ and $t^m$ is equal to
$$
6  \Lie \langle \Lambda, \Lambda \rangle  \left(t^n, t^m\right) = 6 \cdot 2 \res \left(t^n d t^m \right)= 12 m \, \delta_{m, -n}  \,  \mbox{.}
$$

\smallskip

{\em The second case}. Now we calculate
\begin{multline*}
12 \Lie D \left(t^n, t^{m+1} \frac{\partial}{\partial t} \right) = 12 \tr \left( c_{t^{m+1}  \frac{\partial}{\partial t}} b_{t^n} - c_{t^n} b_{t^{m+1}  \frac{\partial}{\partial t}  }  \right) = \\ = 12 \left( \sum_{l=0}^{m-1} (l-m) \right)\delta_{m, -n}  = 6 (-m- m^2) \, \delta_{m, -n}  \,  \mbox{.}
\end{multline*}
Here the result is equal to the first summand  $12 \tr  \left( c_{t^{m+1}  \frac{\partial}{\partial t}} b_{t^n}  \right)$ when $m > 0$ and it is equal to the second summand $12  \left(- c_{t^n} b_{t^{m+1}  \frac{\partial}{\partial t}  }  \right) $   when $m \le 0$ (the other summands are equal to zero).

The right hand side of~\eqref{Lie-RR} applied to the elements $t^n$ and $t^{m+1} \frac{\partial}{\partial t}$ is equal to
$$
-6  \Lie \langle \Lambda, \Omega \rangle  \left(t^n,  t^{m+1} \frac{\partial}{\partial t}  \right) = -6  \res \left(t^n d \left(t^{m+1} \right)' \right)=
 6 (-m -m^2)\, \delta_{m, -n}  \,  \mbox{.}
$$

\smallskip

{\em The third case}. At the end,   we calculate

\begin{multline*}
12 \Lie D \left(  t^{n+1} \frac{\partial}{\partial t}  , t^{m+1} \frac{\partial}{\partial t} \right) = 12 \tr \left( c_{t^{m+1}  \frac{\partial}{\partial t}} b_{ t^{n+1} \frac{\partial}{\partial t}  } - c_{ t^{n+1} \frac{\partial}{\partial t}  } b_{t^{m+1}  \frac{\partial}{\partial t}  }  \right) = \\ = 12 \left( \sum_{l=0}^{m-1} l(l-m) \right)\delta_{m, -n}  = 2 (m- m^3) \, \delta_{m, -n}  \,  \mbox{.}
\end{multline*}
Here the result is equal to the first summand  $12 \tr  \left( c_{t^{m+1}  \frac{\partial}{\partial t}} b_{ t^{n+1}  \frac{\partial}{\partial t} }  \right)$ when $m > 0$ and it is equal to the second summand $12  \left( - c_{ t^{n+1}  \frac{\partial}{\partial t  }} b_{t^{m+1}  \frac{\partial}{\partial t}  }  \right) $   when $m \le 0$ (the other summands are equal to zero).

The right hand side of~\eqref{Lie-RR} applied to the elements $t^{n+1} \frac{\partial}{\partial t}  $ and $t^{m+1} \frac{\partial}{\partial t}$ is equal to
$$
 \Lie \langle \Omega, \Omega \rangle  \left(  t^{n+1} \frac{\partial}{\partial t} ,  t^{m+1} \frac{\partial}{\partial t}  \right) =
  2 \res \left((t^{n+1})' d \left(t^{m+1} \right)' \right)=
 2 (m -m^3)\, \delta_{m, -n}  \,  \mbox{.}
$$

 \end{proof}

\subsection{Correspondence between infinitesimal formal groups and Lie algebras}
\label{infini}

Let $k$ be a field of zero characteristic.

It is well-known that the category of finite-dimensional formal groups over $k$ is equivalent to the category of finite-dimensional Lie algebras over $k$,
see, e.g., Theorem~3 in part~II, chapter~V, \S~6 of~\cite{Serre}.
We will need the generalization of this statement to the infinite-dimensional case.

An {\em infinitesimal  formal group  over $k$} is a group ind-scheme \linebreak ${G = \mbox{``$\varinjlim\limits_{i \in I}$''} \Spec A_i}$
 such that every $A_i$ is a finite-dimensional $\Q$-algebra and
 the corresponding profinite algebra of regular functions $\oo(G)= \mbox{$\varprojlim\limits_{i \in I} $} A_i$ is a local $k$-algebra with the residue field $k$.

By~\cite[ch.~I, \S~1.7, \S~2.14; ch. II, \S~2.2, \S~2.5]{Die} and \cite[Expose $\rm VII_B$, \S~2.7]{SGA3}, the functor $G \mapsto \oo(G)$ is an equivalence between
the category of infinitesimal formal groups over $k$ and  the opposite of the category of commutative   linearly compact local $k$-algebras $A= k \oplus U$ (decomposition as
$k$-vector spaces), where $U$ is an open ideal,
such that the topological dual vector space consisting  of continuous  linear functionals $\Hom^{\rm c}_k (A, k)$ is a (discrete) bialgebra over $k$.

Now, similarly to the finite-dimensional case, by~\cite[Expose $\rm VII_B$, \S~3]{SGA3}, there is an isomorphism of bialgebras
$$
U(\Lie G (k) )  \DistTo   \Hom\nolimits^{\rm c}_k (\oo(G), k)  \, \mbox{,}
$$
where $U(\Lie G (k) )$ is the universal enveloping algebra of the Lie algebra $\Lie G(k)$ over~$k$ with the standard bialgebra structure. Moreover,
this isomorphism is induced by the natural linear  embedding of $\Lie G (k)$ to  $\Hom\nolimits^{\rm c}_k (\oo(G), k)$, and $\Lie G(k)  \subset U(\Lie G (k) )$ is the set of primitive elements of the bialgebra.

These constructions lead to the  theorem  (see~\cite[Expose $\rm VII_B$, \S~3.3.2]{SGA3}) that {\em the functor
$G \mapsto \Lie G (k)$ gives an equivalence of the category of infinitesimal formal groups over  $k$ and the category of Lie
algebras over $k$.}

\subsection{Class of the detrminant central extension of ${\G}_{\Q}$   in the group  $H^2 \left({\G}_{\Q},  {\GG_m}_{\Q}\right)$}

We recall, see Section~\ref{Sec-Loop-Aut}, that we have
$$\GG_m \times \vpl  \hookrightarrow (L\GG_m)^0 \qquad   \mbox{and}  \qquad \AutLp  \hookrightarrow \AutL \, \mbox{.} $$
It is clear that $\GG_m \times \vpl$ is an  $\AutLp$-module.
We consider the group subfunctor
$$
\G_+  = (\GG_m \times \vpl)  \rtimes \AutLp \,  \hookrightarrow  \, \G^0  \, \mbox{.}
$$
It is clear that for any commutative ring $A$ elements of $\G_+(A)$ acts on the $A$-module $A[[t]]$ by continuous automorphisms.

\medskip

We will construct the infinitesimal formal group $\II\G^0_{\Q}$ over $\Q$ from the group ind-affine ind-scheme $\G^0_{\Q}$ (see Section~\ref{infini}).

By definition, for any
commutative $\Q$-algebra $A$, the group
$\II\G^0_{\Q}(A)
$
consists of elements $(h, \vp) \in (L\GG_m)^0(A) \rtimes \AutL (A)$ such that in the decompositions
\begin{equation}
\label{coeff}
h = 1 + \sum_i b_i t^i \qquad \mbox{and} \qquad  \widetilde{\vp}= \vp(t) = t + \sum_i c_i t^j
\end{equation}
 all elements $b_i$ and $c_i$ belong to $\Nil(A)$ and they are equal to zero except for a finite number of elements, cf. the definition of the infinitesimal formal group~$\ff \AutL_{\Q}$ constructed from $\AutL_{\Q}$ in~\cite[\S~5.1]{O1}. (The statement that $\II \G^0_{\Q}(A)$ is a group follows easily from decompositions~\eqref{declg} and~\eqref{decaut}.)

From~\eqref{coeff} it follows that the group functor $\II \G^0_{\Q}(A)$ is represented by an ind-scheme
\begin{equation}  \label{expl-ind}
\mbox{``$\varinjlim\limits_{\{\epsilon_i\}}$''} \Spec \Q [b_i; i \in \dz]/ I_{\{\epsilon_i\}}
\times
\mbox{``$\varinjlim\limits_{\{\epsilon_i\}}$''} \Spec \Q [c_i; i \in \dz]/ I_{\{\epsilon_i\}}
 \, \mbox{,}
\end{equation}
where $\Q [b_i; i \in \dz]$ is the polynomial ring over $\Q$ on a set of variables $b_i$ with $i \in \dz$, and the limit is taken over all the sequences $\{\epsilon_i\}$ with $i \in \dz$ and $\epsilon_i$ are nonnegative integers such that all but finitely many $\epsilon_i$ equal zero, the ideal $I_{\{\epsilon_i\}}$ is generated by elements $b_i^{\epsilon_i +1}$ for all $i \in \dz$. The ring $\Q [b_i; i \in \dz]$ and its  ideals $I_{\{\epsilon_i\}}$ has the same description.

Thus, $\II \G^0_{\Q}$ is an infinitesimal formal group over $\Q$. And we have the natural embedding
$$
\II \G^0_{\Q}  \hookrightarrow \G^0_{\Q}
$$
(which is embedding on $A$-points for any commutative $\Q$-algebra $A$).

It is easy to see that the Lie $\Q$-algebra $  \Lie \II \G^0_{\Q}(\Q)$ has a basis consisting of all elements  $d_n = t^{n+1} \frac{\partial}{\partial t}$
and $e_n = t^n$, where $n \in \dz$, with the following relations
\begin{equation}  \label{equat}
\left[d_n, d_m \right]= (m-n) d_{n+m} \, \mbox{,} \qquad \left[d_n, e_m \right]= m e_{n+m}
 \, \mbox{,} \qquad \left[e_n, e_m \right]= 0  \, \mbox{.}
\end{equation}

\medskip

 We define the infinitesimal formal group $\II{\G_+}_{\Q}$ over $\Q$ in the following way
 $$
 \II{\G_+}_{\Q}(A)  =  \II \G^0_{\Q}(A)  \cap \G_+(A)  \, \mbox{,}
 $$
 where $A$ is any commutative $\Q$-algebra, and the intersection is taken in the group $\G^0(A)$. It is clear that the group functor
$\II{\G_+}_{\Q}$ is represented by infinitesimal formal group over~$\Q$ (we have to take in formula~\eqref{expl-ind}  only indices with $i \ge 0$).

We define the infinitesimal formal group $\II {\GG_m}_{\Q}$ over $\Q$ as
$$\II {\GG_m}_{\Q} (A) = 1 + \Nil(A)  \subset A^*  \, \mbox{,}$$
where $A$ is any commutative $\Q$-algebra. (This formal group is the formal group constructed from the completion of the local ring at the identity element of the algebraic group ${\GG_m}_{\Q}$.)

\begin{lemma}  \label{bef-th}
\begin{enumerate}
\item \label{ii1} Any morphism of group functors $\II{\G_+}_{\Q} \to {\GG_m}_{\Q}$ is trivial.
\item \label{ii2}  The homomorphism of $\Q$-algebras of regular functions $\oo(\G^0_{\Q}  \times \G^0_{\Q})  \to \oo(\II \G^0_{\Q}  \times \II \G^0_{\Q})$ induced by the natural morphism of ind-affine ind-schemes  $\II \G^0_{\Q} \times \II \G^0_{\Q} \to \G^0_{\Q}  \times \G^0_{\Q} $ is an embedding.
\end{enumerate}
\end{lemma}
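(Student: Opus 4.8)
The plan is to treat the two items by completely different mechanisms. For item~\ref{ii1} I would first observe that, since $\II{\G_+}_{\Q}$ is an infinitesimal formal group, any morphism of group functors $\beta\colon \II{\G_+}_{\Q}\to{\GG_m}_{\Q}$ automatically factors through the infinitesimal formal group $\II{\GG_m}_{\Q}$: being a group morphism it sends the identity to the identity, so the comorphism carries the coordinate $x$ of ${\GG_m}_{\Q}$ to an element $1+u$ with $u$ in the augmentation ideal of the pro-local ring $\oo(\II{\G_+}_{\Q})$, whence $\beta(g)\in 1+\Nil(A)$ for every $\Q$-algebra $A$ and every $g\in\II{\G_+}_{\Q}(A)$. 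Thus $\beta$ is a morphism of infinitesimal formal groups over $\Q$, and by the equivalence of categories recalled in Section~\ref{infini} it is determined by the Lie algebra homomorphism $\Lie\beta\colon\Lie\II{\G_+}_{\Q}(\Q)\to\Lie\II{\GG_m}_{\Q}(\Q)=\Q$, the target being abelian.

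It therefore suffices to show that $\Lie\II{\G_+}_{\Q}(\Q)$ is a perfect Lie algebra, i.e.\ that it coincides with its own commutator, for then $\Lie\beta$ vanishes and hence so does $\beta$. This Lie algebra has basis $e_n=t^n$ for $n\ge 0$ and $d_n=t^{n+1}\frac{\partial}{\partial t}$ for $n\ge -1$, subject to relations~\eqref{equat}. Using the same two brackets as in Lemma~\ref{commutant}, namely $d_{-1}=\frac{\partial}{\partial t}=[\frac{\partial}{\partial t},t\frac{\partial}{\partial t}]$ and $e_0=1=[\frac{\partial}{\partial t},t]$, and then repeatedly bracketing with $d_{-1}=\frac{\partial}{\partial t}$ (which by~\eqref{equat} acts as $d_m\mapsto(m+1)d_{m-1}$ and $e_m\mapsto m\,e_{m-1}$), I would climb up to produce every $d_n$ with $n\ge -1$ and every $e_n$ with $n\ge 0$ inside the commutator. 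Note that, in contrast with Lemma~\ref{commutant}, the $A((t))$-module trick is unavailable because only nonnegative powers of $t$ occur, so the generation must be carried out explicitly on the basis; this is the only point of item~\ref{ii1} requiring attention.

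For item~\ref{ii2} the strategy is to reduce to a single factor and to the explicit coordinates of Section~\ref{ex-ind-sch}. Using decompositions~\eqref{decg0}, \eqref{declg}, \eqref{decaut} and the presentations~\eqref{ind-formb}--\eqref{ind-formc}, the morphism $\II\G^0_{\Q}\to\G^0_{\Q}$ respects the factorization into the basic ind-schemes: on the pieces $\vmi$, $\AutLm$ and the $a_0$-coordinate of $\AutLp$ (which are already infinitesimal) it is the identity, while on the remaining coordinates --- the torus coordinate of $\GG_m$, the invertible coordinate $a_1$ of $\AutLp$, and the free polynomial coordinates of $\vpl$ and $\AutLp$ --- it is the restriction to the formal neighbourhood of the identity, i.e.\ on coordinate rings the map $\Q[x,x^{-1}]\to\Q[[x-1]]$ or $\Q[a_i]\to\Q[[a_i]]$. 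Each such comorphism is injective: a nonzero polynomial already survives in a finite truncation quotient $\Q[a_i]/(a_i^{N})$, and a nonzero Laurent polynomial cannot vanish to infinite order at a rational point of the integral scheme $\GG_m$, since the local ring there is a Noetherian domain and injects into its completion.

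Passing from the factors to $\G^0_{\Q}$ and then to the product $\G^0_{\Q}\times\G^0_{\Q}$, the comorphism is the corresponding (completed) tensor product of these injective maps of $\Q$-vector spaces, and injectivity is preserved under tensoring over the field $\Q$. Equivalently, and perhaps more transparently, I would rerun the detection argument from the proof of Proposition~\ref{mor-gr}: given a nonzero $F\in\oo(\G^0_{\Q}\times\G^0_{\Q})$, I would exhibit a point of $\II\G^0_{\Q}\times\II\G^0_{\Q}$ over a test algebra of the form $\Q[z_1,\dots,z_l]/(z_1^{n_1},\dots,z_l^{n_l})$ --- now with purely nilpotent substitutions, shifting the two invertible coordinates by $1$ --- on which $F$ does not vanish, which is possible precisely because passing to the formal neighbourhood of the identity already sees the whole coordinate ring. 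I expect the main obstacle in item~\ref{ii2} to be the bookkeeping of the completed tensor products together with the precise matching of the product-decomposition coordinates $a_i$ with the naive coefficient coordinates $b_i,c_i$ used to present $\II\G^0_{\Q}$ in~\eqref{expl-ind}.
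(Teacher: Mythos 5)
Your proposal is correct and follows essentially the same route as the paper's own proof: for item~\ref{ii1} the paper argues exactly as you do --- the morphism factors through $\II {\GG_m}_{\Q}$ by nilpotence of coordinates (the paper phrases this via $A \to A/\Nil(A)$ rather than via the augmentation ideal), the equivalence of Section~\ref{infini} reduces the question to Lie algebras, and perfectness of $\Lie \II{\G_+}_{\Q}(\Q)$ read off from relations~\eqref{equat} kills the homomorphism, which your brackets $[d_{-1},d_{n+1}]=(n+2)d_n$ and $[d_{-1},e_{n+1}]=(n+1)e_n$ do establish (the phrase ``climb up'' notwithstanding, since the indices descend). For item~\ref{ii2} the paper likewise reads off injectivity from the explicit coordinates of Section~\ref{ex-ind-sch} and formula~\eqref{expl-ind} --- the algebra of mixed Laurent polynomials, polynomials and formal power series embeds into the algebra of formal power series --- and refers to the same detection argument from the proof of Proposition~\ref{mor-gr} that you offer as your alternative, more robust route.
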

\begin{proof}
1. The Lie $\Q$-algebra $\Lie \II{\G_+}_{\Q}(\Q)$ is a subalgebra of the Lie $\Q$-algebra $  \Lie \II\G^0_{\Q}(\Q)$. The basis of this subalgebra
consists of all elements $d_n$ and $e_m$, where $n \ge -1$ and $m \ge 0$. Then it is easy to see from relations~\eqref{equat}
that
$$ \left[ \Lie \II{\G_+}_{\Q}(\Q), \Lie \II{\G_+}_{\Q}(\Q) \right] = \Lie \II{\G_+}_{\Q}(\Q)
$$
(or we can use the similar formulas as in the proof of Lemma~\ref{commutant}).

From this fact and since the Lie algebra $\GG_a(\Q) = \Lie \II {\GG_m}_{\Q} (\Q)$ is Abelian, we obtain that any Lie algebra homomorphism
from   $\Lie \II{\G_+}_{\Q}(\Q)$  to $\Lie \II {\GG_m}_{\Q} (\Q)$ is equal to zero.

Now, using for any commutative $\Q$-algebra $A$ the homomorphism $A \to A/ \Nil(A)$, it is easy to see that any morphism of group functors
 $\tau  : \II{\G_+}_{\Q} \to {\GG_m}_{\Q}$ comes form the morphism  of infinitesimal formal groups  $\gamma  :  \II{\G_+}_{\Q} \to \II {\GG_m}_{\Q}$
via the composition with the morphism of group functors $\II {\GG_m}_{\Q} \to {\GG_m}_{\Q}$.
Be Section~\ref{infini}, the morphism $\gamma$ is trivial. Therefore $\tau$ is also trivial.

2. This statement easily follows from the description of the corresponding ind-schemes,
since it is embedding of the $\Q$-algebra of mixture polynomials, Laurent polynomials and formal power series in infinite number of variables
to the $\Q$-algebra of formal power series in infinite number of variables,
 see formula~\eqref{expl-ind} and  Section~\ref{ex-ind-sch} (compare also with the proof of Proposition~\ref{mor-gr}).
\end{proof}

\bigskip

Any element from the group $H^2({\G}_{\Q}, {\GG_m}_{\Q})$, where ${\GG_m}_{\Q}$
is a trivial ${\G}_{\Q}$-module,
defines a central extension of group functors that allow a section (as functors), see Section~\ref{cohom-sec}. Therefore it defines the central extension of the corresponding Lie algebras, and a section of central extensions of group functors gives the section of central extension of Lie algebras. Hence, the group of $2$-cocycles of group functor is mapped to the group of Lie algebra $2$-cocycles, see~\cite[Appendix~A.3]{O1}. Moreover we have the natural homomorphisms
\begin{equation}  \label{gr-Lie}
H^2({\G}_{\Q}, {\GG_m}_{\Q})  \lrto H^2(\G^0_{\Q}, {\GG_m}_{\Q})  \lrto H^2(\Lie \G^0_{\Q}(\Q), \Q  ) \lrto H^2(\Lie \II \G^0_{\Q}(\Q), \Q )  \, \mbox{.}
\end{equation}

\begin{nt}  \em
From Proposition~\ref{Lie-cup}, Remark~\ref{Lie-dim} and~\cite[Prop.~(2.1)]{ADKP} (especially its proof) it follows that
$$ \dim\nolimits_{\Q} H^2(\Lie \II \G^0_{\Q}(\Q), \Q ) =3$$
and the map that is the composition of maps
in~\eqref{gr-Lie} is surjective.
\end{nt}

\begin{Th}  \label{th-comp}
Any element from $H^2( \G^0_{\Q}, {\GG_m}_{\Q})$, where ${\GG_m}_{\Q}$
is a trivial $\G^0_{\Q}$-module, is uniquely defined by its image in $H^2(\Lie \II \G^0_{\Q}(\Q), \Q )$ together with its restriction
to $H^2(  {\G_+}_{\Q}, {\GG_m}_{\Q})$.
\end{Th}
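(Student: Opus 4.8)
The plan is to show that the natural map
$H^2(\G^0_{\Q},{\GG_m}_{\Q})\to H^2(\Lie\II\G^0_{\Q}(\Q),\Q)\oplus H^2({\G_+}_{\Q},{\GG_m}_{\Q})$
is injective, so I take a class $c$ in its kernel, represent it by a normalized $2$-cocycle $f\in C^2(\G^0_{\Q},{\GG_m}_{\Q})$, and aim to exhibit $f$ as a coboundary. The whole argument rests on viewing $\G^0$, as a functor (not as a group functor), through decompositions~\eqref{declg}, \eqref{decaut}, \eqref{dec}: these identify $\G^0$ with the direct product $\GG_m\times\vpl\times\vmi\times\AutLp\times\AutLm$ and identify ${\G_+}$ with the sub-product $\GG_m\times\vpl\times\AutLp$. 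In particular there is a retraction of functors $\pi\colon\G^0\to\G_+$ obtained by forgetting the $\vmi$- and $\AutLm$-coordinates, and it satisfies $\pi|_{{\G_+}}=\id$.

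First I would normalize over $\G_+$. Since the restriction of $c$ to ${\G_+}_{\Q}$ vanishes, $f|_{{\G_+}_{\Q}}=\delta b_+$ for some $1$-cochain $b_+\colon{\G_+}_{\Q}\to{\GG_m}_{\Q}$. The composite $b_+\circ\pi\colon\G^0_{\Q}\to{\GG_m}_{\Q}$ is a global $1$-cochain, and replacing $f$ by $f\cdot\delta(b_+\circ\pi)^{-1}$ does not change $c$; because $(b_+\circ\pi)|_{{\G_+}}=b_+$, the new cocycle satisfies $f|_{{\G_+}_{\Q}}=1$. Thus I may assume from now on that $f$ is trivial on ${\G_+}_{\Q}$.

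Next I would trivialize over the infinitesimal formal group. By the equivalence between infinitesimal formal groups over $\Q$ and Lie algebras recalled in Section~\ref{infini}, the chain~\eqref{gr-Lie} identifies the second cohomology of $\II\G^0_{\Q}$ with coefficients in $\II{\GG_m}_{\Q}$ with $H^2(\Lie\II\G^0_{\Q}(\Q),\Q)$. As the image of $c$ there is zero, $f|_{\II\G^0_{\Q}}=\delta b_0$ for a cochain $b_0\colon\II\G^0_{\Q}\to\II{\GG_m}_{\Q}$ with $b_0(e)=1$. Restricting to $\II{\G_+}_{\Q}=\II\G^0_{\Q}\cap{\G_+}_{\Q}$ and using $f|_{{\G_+}_{\Q}}=1$, the cochain $b_0|_{\II{\G_+}_{\Q}}$ satisfies $\delta(b_0|_{\II{\G_+}_{\Q}})=1$; that is, it is a morphism of group functors $\II{\G_+}_{\Q}\to{\GG_m}_{\Q}$, hence trivial by Lemma~\ref{bef-th}.\ref{ii1}. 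Therefore $b_0\equiv 1$ on $\II{\G_+}_{\Q}$, which means $b_0-1$ lies in the ideal of $\oo(\II\G^0_{\Q})$ generated by the $\vmi$- and $\AutLm$-coordinates.

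The hard part will be to globalize: I must promote $b_0$ to a $1$-cochain $\tilde b\colon\G^0_{\Q}\to{\GG_m}_{\Q}$ with $\tilde b|_{\II\G^0_{\Q}}=b_0$. The point is that the negative directions $\vmi$ and $\AutLm$ are already pro-nilpotent in $\G^0_{\Q}$ itself, so by the explicit models~\eqref{ind-formb}--\eqref{ind-formc} the corresponding parts of $\oo(\G^0_{\Q})$ and of $\oo(\II\G^0_{\Q})$ coincide; combined with the vanishing of $b_0$ on $\II{\G_+}_{\Q}$ just established (so that $b_0$ is concentrated in exactly these directions, the positive coordinates entering only through the regular, hence polynomial, structure inherited from $f$), this places $b_0$ in the subalgebra $\oo(\G^0_{\Q})\hookrightarrow\oo(\II\G^0_{\Q})$ of Lemma~\ref{bef-th}.\ref{ii2} and yields the extension $\tilde b$. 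Granting this, $f\cdot\delta\tilde b^{-1}$ is a $2$-cocycle on $\G^0_{\Q}$ whose restriction to $\II\G^0_{\Q}$ equals $\delta b_0\cdot\delta b_0^{-1}=1$; as a regular function on $\II\G^0_{\Q}\times\II\G^0_{\Q}$ it is the constant $1$, so by the injectivity in Lemma~\ref{bef-th}.\ref{ii2} it is the constant $1$ on all of $\G^0_{\Q}\times\G^0_{\Q}$. Hence $f=\delta\tilde b$ is a coboundary and $c=0$, proving injectivity. I expect the extension step in this last paragraph to be the genuine obstacle, the other steps mirroring the uniqueness arguments already used for Proposition~\ref{mor-gr} and Lemma~\ref{bef-th}.
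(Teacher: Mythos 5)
Your overall skeleton --- kill the class on ${\G_+}_{\Q}$ by a coboundary built from the functor retraction $\pi$, split the restriction to $\II\G^0_{\Q}$ by a cochain $b_0$ valued in $\II{\GG_m}_{\Q}$ using Section~\ref{infini}, observe $b_0|_{\II{\G_+}_{\Q}}=1$ via item~\ref{ii1} of Lemma~\ref{bef-th}, and finish with the injectivity of item~\ref{ii2} --- is the same as the paper's, which phrases it in terms of sections of the central extension rather than cochains: a group-functor section $s$ over $\II\G^0_{\Q}$ obtained by exponentiating the Lie algebra splitting, a group-functor section $r$ over ${\G_+}_{\Q}$, their agreement on $\II{\G_+}_{\Q}$, and a final comparison of regular functions on $\II\G^0_{\Q}\times\II\G^0_{\Q}$. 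Up to and including the vanishing $b_0|_{\II{\G_+}_{\Q}}=1$ your steps are correct (one caveat: the asserted identification of the group cohomology of $\II\G^0_{\Q}$ with $H^2(\Lie \II \G^0_{\Q}(\Q),\Q)$ is more than Section~\ref{infini} establishes, but the implication you actually use --- Lie-trivial implies group-functor-trivial --- does follow from the equivalence of categories, exactly as in the paper).

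The genuine gap is where you suspected it: the extension of $b_0$ to a regular $1$-cochain $\tilde b$ on $\G^0_{\Q}$, and the justification you sketch for it does not work. Vanishing of $b_0$ on $\II{\G_+}_{\Q}$ only places $b_0-1$ in the ideal of $\oo(\II\G^0_{\Q})$ generated by the negative-index coordinates; elements of that ideal can still depend on the positive-index coordinates through arbitrary formal power series. For instance, in the coordinates of~\eqref{expl-ind} a function of the shape $1+b_{-1}\exp(b_1)$ is identically $1$ on $\II{\G_+}_{\Q}$, yet it does not lie in the image of $\oo(\G^0_{\Q})\hookrightarrow\oo(\II\G^0_{\Q})$, whose elements have only polynomial (or Laurent polynomial) dependence on the positive directions (see the description in the proofs of Proposition~\ref{mor-gr} and Lemma~\ref{bef-th}). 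Your parenthetical claim that the positive coordinates enter $b_0$ only through the regular structure inherited from $f$ is precisely what would need proof: $b_0$ is produced by exponentiating a Lie-algebra splitting, a purely formal construction with no regularity control, and the identity $\delta b_0=f|_{\II\G^0_{\Q}}$ cannot force regularity of $b_0$ (the coboundary of any non-regular homomorphism, such as $\exp$ of a coordinate, is the constant $1$, which is regular). Note that item~\ref{ii2} of Lemma~\ref{bef-th} gives injectivity, never surjectivity, so it can only be used to compare two already-regular functions, not to descend a formal one. The paper avoids this obstacle entirely: it never extends $s$ (equivalently $b_0$) off $\II\G^0_{\Q}$, but instead uses the functorial factorization $(h,\vp)=(h_-,1)\cdot(h_+,\vp_+)\cdot(1,\vp_-)$ coming from~\eqref{declg} and~\eqref{decaut}, whose outer factors lie in $\II\G^0_{\Q}$ (the coefficients of $\vmi$ and $\AutLm$ are nilpotent and finitely supported) and whose middle factor lies in ${\G_+}_{\Q}$, and glues the two sections along it, $q((h,\vp))=s(h_-)\,r(h_+\vp_+)\,s(\vp_-)$; then $q|_{\II\G^0_{\Q}}=s$ because $r$ and $s$ agree on $\II{\G_+}_{\Q}$, and multiplicativity of $q$ is checked by restricting to $\II\G^0_{\Q}\times\II\G^0_{\Q}$ and invoking item~\ref{ii2}. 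Repairing your write-up means replacing ``extend $b_0$'' by an explicit definition of $\tilde b$ along this factorization (with correction terms given by values of $f$), which is the paper's proof in cocycle form.
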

\begin{proof}
It is enough to prove that if a central extension of group functors
\begin{equation}  \label{proof-centr-ext}
1 \lrto {\GG_m}_{\Q}  \lrto H \stackrel{\mu}{\lrto} \G^0_{\Q}  \lrto 1
\end{equation}
that admits a section $p$ of $\mu$ (as functors) is isomorphic to the trivial central extension after restriction ${\G_+}_{\Q}$ and the corresponding Lie algebra central extension of  $\Lie \II\G^0_{\Q}(\Q)$  by $\Q$ is isomorphic to the Lie algebra trivial central extension, then
central extension~\eqref{proof-centr-ext} is also isomorphic to the trivial central extension.

By changing the section $p$ to $p \cdot p(e)^{-1}$, where $e$ is the identity element of the group ind-scheme $\G^0_{\Q}$, we can suppose that $p(e)$
is the identity element of the group ind-scheme~$H$. Hence, the corresponding $2$-cocycle $K$ for~\eqref{proof-centr-ext}, constructed by $p$,  satisfies $K(e,e)=1$, where $1 \in {\GG_m}_{\Q}$ is the identity element.

Therefore, by considering for any commutative $\Q$-algebra $A$ the homomorphism ${A \to A/ \Nil(A)}$, it is easy to see that
$K$ maps $\II \G^0_{\Q}  \times \II \G^0_{\Q}$ to the subfunctor $\II {\GG_m}_{\Q}$ of the functor $ {\GG_m}_{\Q}$. Hence we obtain that central extension~\eqref{proof-centr-ext} restricted to $\II \G^0_{\Q} $ comes via $\II {\GG_m}_{\Q}  \to {\GG_m}_{\Q}$ from a central extension
\begin{equation}  \label{in-c}
1 \lrto  \II {\GG_m}_{\Q}  \lrto  T  \stackrel{\kappa}{\lrto}  \II \G^0_{\Q}  \lrto 1
\end{equation}
that admits a section of $\kappa$ (section as functors). Because of this section, $T \simeq  \II {\GG_m}_{\Q}  \times \II \G^0_{\Q}$ (isomorphism as ind-schemes). Therefore $T$ is an infinitesimal formal group over $\Q$.  By our condition, the corresponding to~\eqref{in-c}    Lie $\Q$-algebra
central extension is trivial, and hence it admits a section-homomorphism from $\Lie \II \G^0_{\Q}(\Q)$ to $\Lie T (\Q)$. Hence, by Section~\ref{infini},
there is the corresponding   section $s$ from  $\II \G^0_{\Q}$ to $T$ that is a morphism of group ind-schemes.

Thus, we have constructed the morphism of group ind-schemes
$$
s \, : \, \II \G^0_{\Q}  \lrto H
$$
such that $\mu s = \id$ (we denoted it by the same letter $s$ as the above morphism from  $\II \G^0_{\Q}$ to~$T$).

By our condition, we have also a morphism of group ind-schemes
$$
r \, : \,  {\G_+}_{\Q}  \lrto H
$$
such that $\mu r = \id$.

We claim that $r \mid_{\II {\G_+}_{\Q}}  = s \mid_{\II {\G_+}_{\Q}} $.
Indeed, $r/s$ is a morphism of group functors from $\II {\G_+}_{\Q}$
to ${\GG_m}_{\Q}$ that is trivial by item~\ref{ii1} of Lemma~\ref{bef-th}.

For any commutative $\Q$-algebra $A$ and any element $(h, \vp)  \in \G^0(A)$, by~\eqref{declg} and~\eqref{decaut}  we have unique decompositions
$$
h = h_- h_+  \, \mbox{,} \qquad \qquad \vp = \vp_+ \vp_- \,  \mbox{,}
$$
 where  $ h_- \in \vmi(A)$,
$h_+ \in \GG_m(A) \times \vpl(A)$,  $ \vp_+ \in \AutLp$, $ \vp_- \in \AutLm$. These decompositions are functorial with respect to $A$.
We define the morphism of functors $q : \G^0_{\Q} \to H$ such that $\mu q =\id$ in the following way
$$
q ((h, \vp)) = s(h_-) r(h_+ ) r(\vp_+) s (\vp_-) = s(h_-) r(h_+ \vp_+) s (\vp_-)  \, \mbox{.}
$$
To prove the theorem it is enough to prove that $q$ is a morphism of group functors, i.e. that it preserves the group structure.

We claim that $q \mid_{\II \G^0_{\Q}} = s$. Indeed, if $(h, \vp) \in \II \G^0_{\Q}(A)$, then
$$ h_+ \vp_+ =  h_-^{-1} (h, \vp) \vp_-^{-1} \, \in  \, \II \G^0_{\Q}(A) \, \mbox{,}$$
since  $h_-^{-1} $ and  $ \vp_-^{-1} $ are from $ \II \G^0_{\Q}(A)$.
Hence, the element $h_+ \vp_+ $ is from the group $ \II {\G_+}_{\Q} (A)$.
Therefore  ${r(h_+ \vp_+) = s (h_+ \vp_+)}$.  Since $s$ is a morphism of group functors,  we obtain
$$
q ( (h, \vp)  ) = s(h_-) r(h_+ \vp_+) s (\vp_-) = s(h_-) s(h_+ \vp_+) s (\vp_-) = s ((h, \vp) )  \, \mbox{.}
$$

Now we consider a morphism of ind-schemes
$$
\beta \, : \, \G^0_{\Q} \times \G^0_{\Q}  \lrto {\GG_m}_{\Q} \, \mbox{,} \qquad \beta(g_1 \times g_2)= q(g_1) q(g_2) q(g_1 g_2)^{-1}  \, \mbox{,} \qquad
g_1, g_2 \in \G^0_{\Q}(A)  \, \mbox{.}
$$
We have to prove that $\beta = \bf{1}$, where ${\bf 1}$ is the constant morphism that is equal to $1$.  We note that $\beta \mid_{\II \G^0_{\Q}  \times \II \G^0_{\Q}} ={\bf 1}$, since $s$ preserves the group structure.
The morphism $\beta$ is determined by the homomorphism of $\Q$-algebras of regular functions
$$
\beta^* \, : \, \oo({\GG_m}_{\Q})  \lrto \oo(\G^0_{\Q} \times \G^0_{\Q})
$$
Using the composition of the morphism  $\II \G^0_{\Q}  \times \II \G^0_{\Q}  \to \G^0_{\Q} \times \G^0_{\Q}$ with the morphism $\beta$ and using item~\ref{ii2} of Lemma~\ref{bef-th}, we obtain $\beta^* = {\bf{1}}^*$. Therefore $\beta = \bf{1}$.
\end{proof}

\medskip

Immediately from~Theorem~\ref{th-comp} and Theorem~\ref{th1}  we obtain the following corollary.

\begin{cons} \label{Cor2}
Any element from $H^2( \G_{\Q}, {\GG_m}_{\Q})$, where ${\GG_m}_{\Q}$
is a trivial ${\G}_{\Q}$-module, is uniquely defined by its image in $H^2(\Lie \II \G^0_{\Q}(\Q), \Q )$ together with its restriction
to $H^2(  {\G_+}_{\Q}, {\GG_m}_{\Q})$.
\end{cons}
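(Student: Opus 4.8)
The plan is to deduce the corollary directly from Theorem~\ref{th-comp} together with Theorem~\ref{th1}, using the identification of $H^2(G, F)$ with the set of equivalence classes of central extensions of $G$ by $F$ that admit a section as functors (Section~\ref{cohom-sec}). In these terms, the restriction of a central extension to a group subfunctor corresponds to the restriction homomorphism in cohomology, and Theorem~\ref{th1} asserts that a central extension of $\G_{\Q}$ by ${\GG_m}_{\Q}$ is determined up to isomorphism by its restriction to $\G^0_{\Q}$; equivalently, the restriction map $H^2(\G_{\Q}, {\GG_m}_{\Q}) \lrto H^2(\G^0_{\Q}, {\GG_m}_{\Q})$ is injective. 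Theorem~\ref{th-comp} asserts, in turn, that an element of $H^2(\G^0_{\Q}, {\GG_m}_{\Q})$ is uniquely determined by the pair consisting of its image in $H^2(\Lie \II \G^0_{\Q}(\Q), \Q)$ and its restriction to $H^2({\G_+}_{\Q}, {\GG_m}_{\Q})$.

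First I would record that, by Remark~\ref{tng-Lie} and the inclusions of subfunctors ${\G_+}_{\Q} \hookrightarrow \G^0_{\Q} \hookrightarrow \G_{\Q}$ and $\II \G^0_{\Q} \hookrightarrow \G^0_{\Q} \hookrightarrow \G_{\Q}$, the two invariants appearing in the statement are already computed through $H^2(\G^0_{\Q}, {\GG_m}_{\Q})$: the restriction map $H^2(\G_{\Q}, {\GG_m}_{\Q}) \lrto H^2({\G_+}_{\Q}, {\GG_m}_{\Q})$ factors through $H^2(\G^0_{\Q}, {\GG_m}_{\Q})$ by functoriality of restriction, and the comparison map $H^2(\G_{\Q}, {\GG_m}_{\Q}) \lrto H^2(\Lie \II \G^0_{\Q}(\Q), \Q)$ is precisely the composite displayed in~\eqref{gr-Lie}, whose first arrow is the restriction to $\G^0_{\Q}$.

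Granting this, the deduction is a short diagram chase. Take two classes $\xi_1, \xi_2 \in H^2(\G_{\Q}, {\GG_m}_{\Q})$ with equal images in $H^2(\Lie \II \G^0_{\Q}(\Q), \Q)$ and equal restrictions in $H^2({\G_+}_{\Q}, {\GG_m}_{\Q})$. By the factorizations above, their restrictions $\xi_1^0, \xi_2^0$ to $H^2(\G^0_{\Q}, {\GG_m}_{\Q})$ have equal images in $H^2(\Lie \II \G^0_{\Q}(\Q), \Q)$ and equal restrictions in $H^2({\G_+}_{\Q}, {\GG_m}_{\Q})$; hence $\xi_1^0 = \xi_2^0$ by Theorem~\ref{th-comp}. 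Since the restriction map to $H^2(\G^0_{\Q}, {\GG_m}_{\Q})$ is injective by Theorem~\ref{th1}, we conclude $\xi_1 = \xi_2$, which is the asserted uniqueness.

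There is no substantive analytic or combinatorial content: the corollary is a formal consequence of the two theorems, exactly as indicated by the phrase ``immediately from Theorem~\ref{th-comp} and Theorem~\ref{th1}''. The only point requiring care --- and the one I would make explicit rather than leave implicit --- is the commutativity of the relevant squares of restriction and comparison maps, i.e. that restricting from $\G_{\Q}$ first to $\G^0_{\Q}$ and then to ${\G_+}_{\Q}$ (respectively, passing to the Lie algebra $\Lie \II \G^0_{\Q}(\Q)$) agrees with the direct maps out of $\G_{\Q}$. This is immediate from the functoriality of the cochain construction~\eqref{cochain-functor1} and of the passage to Lie algebra $2$-cocycles, so once it is stated the proof reduces to the one-line chase above.
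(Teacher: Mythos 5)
Your proposal is correct and follows essentially the same route as the paper, which obtains Corollary~\ref{Cor2} ``immediately from Theorem~\ref{th-comp} and Theorem~\ref{th1}'': Theorem~\ref{th1} gives injectivity of restriction $H^2(\G_{\Q}, {\GG_m}_{\Q}) \to H^2(\G^0_{\Q}, {\GG_m}_{\Q})$ and Theorem~\ref{th-comp} handles the rest. Your explicit factorization of the two invariants through $H^2(\G^0_{\Q}, {\GG_m}_{\Q})$ and the resulting diagram chase simply make precise what the paper leaves as immediate.
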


\begin{nt} \em
Theorem~\ref{th-comp} is a generalization of the similar Theorem~5.1 from~\cite{O1} when $\G^0$ is changed to $\AutL$ (for the case of the group of orientation-preserving   diffeomorphisms of the circle in the theory of infinite-dimensional Lie groups see Corollary (7.5) from~\cite{Se}).
\end{nt}

We recall that by Remark~\ref{can-sec} the determinant central extension of $\G$ by $\GG_m$ has the natural section that gives the $2$-cocycle $D$.

Now we obtain a local analog of Deligne-Riemann-Roch isomorphism~\eqref{DRR1}.

\begin{Th}  \label{LDRR}
In the group $H^2 (\G_{\Q}, {\GG_m}_{\Q})$, where ${\GG_m}_{\Q}$
is a trivial ${\G}_{\Q}$-module, we have
\begin{equation}  \label{eq-coc-th}
 D^{12} =  \langle \Lambda, \Lambda \rangle^6 \cdot    \langle \Lambda, \Omega \rangle^{-6} \cdot \langle \Omega, \Omega \rangle  \mbox{.}
\end{equation}
\end{Th}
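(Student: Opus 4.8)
The plan is to reduce everything to the rigidity statement of Corollary~\ref{Cor2}: to prove equality~\eqref{eq-coc-th} in $H^2(\G_{\Q}, {\GG_m}_{\Q})$ it suffices to check that the two sides of~\eqref{eq-coc-th} have the same image in $H^2(\Lie \II \G^0_{\Q}(\Q), \Q)$ and the same restriction to $H^2({\G_+}_{\Q}, {\GG_m}_{\Q})$. Accordingly I would split the argument into a Lie-algebra comparison and a comparison on the subgroup $\G_+$, neither of which requires new hard input once the earlier results are granted.

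For the Lie-algebra comparison I would use that the homomorphism $H^2(\G^0_{\Q}, {\GG_m}_{\Q}) \to H^2(\Lie \G^0_{\Q}(\Q), \Q)$ from~\eqref{gr-Lie} sends the class of a group $2$-cocycle $\Upsilon$ to the class of its associated Lie algebra $2$-cocycle $\Lie \Upsilon$, and that the multiplicative group law of $\GG_m$ corresponds to the additive law of $\GG_a = \Lie \GG_m$. Hence applying $\Lie$ to~\eqref{eq-coc-th} turns the multiplicative identity into the additive one $12\,\Lie D = 6\,\Lie\langle\Lambda,\Lambda\rangle - 6\,\Lie\langle\Lambda,\Omega\rangle + \Lie\langle\Omega,\Omega\rangle$, which is exactly the content of Theorem~\ref{main-Lie}. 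Since Theorem~\ref{main-Lie} is an equality of genuine cocycles on $\Lie \G^0$ (proved via the trace--residue formulas of Propositions~\ref{Lie-cup} and~\ref{Lie-D}), it remains valid after restriction to the subalgebra $\Lie \II \G^0_{\Q}(\Q)$, so the images of the two sides in $H^2(\Lie \II \G^0_{\Q}(\Q), \Q)$ coincide.

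For the comparison on $\G_+$ I would show that both sides restrict to the trivial class. On the left, for any commutative $\Q$-algebra $A$ and any $g \in \G_+(A)$ the element $g$ preserves $A[[t]]$: the $\GG_m \times \vpl$-factor acts by multiplication by a unit of $A[[t]]$ and the $\AutLp$-factor by a continuous automorphism of $A[[t]]$. Hence in the block matrix~\eqref{matr'} one has $b_g = 0$, so $d_{xy} = c_x b_y + d_x d_y = d_x d_y$, and formula~\eqref{coc-D-2} gives $D|_{\G_+} \equiv 1$, whence $D^{12}|_{\G_+}$ is trivial. On the right, for $g = (h,\vp) \in \G_+(A)$ the values $\Lambda(g) = h$ and $\Omega(g) = \vp(t)'$ both lie in $\GG_m \times \vpl$ by the description~\eqref{apl} of $\AutLp$, and the $\G$-module action, which factors through $\AutLp$, maps $\GG_m \times \vpl$ into itself. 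Since $\CC$ vanishes on pairs from $\GG_m \times \vpl$ by~\eqref{CC-ident}, each cocycle $\langle\lambda_1,\lambda_2\rangle = \CC \circ (\lambda_1 \cup \lambda_2)$ restricts to the trivial cocycle on $\G_+$. Thus both sides of~\eqref{eq-coc-th} restrict trivially, and Corollary~\ref{Cor2} then yields the equality.

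The genuinely hard content is already packaged in the two ingredients invoked: the explicit Lie-algebra identity of Theorem~\ref{main-Lie}, where the bookkeeping of Propositions~\ref{Lie-cup} and~\ref{Lie-D} must produce precisely the coefficients $6$, $-6$, $1$, and the rigidity of Corollary~\ref{Cor2}. Within the present assembly the only point demanding care is the $\G_+$ step: one must verify that the module action does not move $\Lambda$ and $\Omega$ out of $\GG_m \times \vpl$, and that $b_g = 0$ holds for all of $\G_+$ and not merely for its two factors separately, so that the simultaneous vanishing of $D$ and of $\CC$ can be applied to all elements at once.
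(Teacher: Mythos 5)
Your proposal is correct and follows essentially the same route as the paper's own proof: reduction via Corollary~\ref{Cor2}, the Lie-algebra identity of Theorem~\ref{main-Lie}, and triviality of both restrictions to ${\G_+}_{\Q}$ via~\eqref{CC-ident} and formula~\eqref{coc-D-2}. The additional details you supply (that $b_g = 0$ for $g \in \G_+(A)$, and that the action of $\AutLp$ preserves $\GG_m \times \vpl$ so that the cup products land where~\eqref{CC-ident} applies) are correct verifications of steps the paper leaves implicit.
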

\begin{proof}
We will use Corollary~\ref{Cor2}. By Theorem~\ref{main-Lie} we have the corresponding equality of the Lie $\Q$-algebra $2$-cocycles.
Besides, by~\eqref{CC-ident} the $2$-cocycle given by the right hand side of~\eqref{eq-coc-th} and restricted to ${\G_+} $ is the constant $2$-cocycle that is equal to $1$. And by Remark~\ref{can-sec}  and formula~\eqref{coc-D-2}, the $2$-cocycle   $D $   restricted to $\G_+$ is also the constant $2$-cocycle that is equal to $1$.
\end{proof}

\vspace{0.5cm}

\noindent Steklov Mathematical Institute of Russsian Academy of Sciences, 8 Gubkina St., Moscow 119991, Russia.

\noindent {\it E-mail:}  ${d}_{-} osipov@mi{-}ras.ru$


\begin{thebibliography}{99}

{
\bibitem{ACG}
E.~Arbarello, M. Cornalba, Ph. A. Griffiths, {\em Geometry of algebraic curves,} Volume II. With a contribution by Joseph Daniel Harris. Grundlehren der mathematischen Wissenschaften [Fundamental Principles of Mathematical Sciences], 268. Springer, Heidelberg, 2011.
}


\bibitem{ADKP}
E.~Arbarello, C.~De Concini, V.~G.~Kac, C.~Procesi, {\em  Moduli spaces of curves and representation theory,} Comm. Math. Phys. 117 (1988), no.~1, 1--36.

{
\bibitem{BE}
S.~Boucksom, D.~Eriksson, {\em Spaces of norms, determinant of cohomology and Fekete points in non-Archimedean geometry,} Adv. Math. 378 (2021), Paper No. 107501, 124~pp.
}


\bibitem{Bro}
K. S. Brown, {\em Cohomology of groups,} Graduate Texts in Mathematics, 87. Springer-Verlag, New York-Berlin, 1982.


\bibitem{BD}
J.-L. Brylinski, P. Deligne, {\em Central extensions of reductive groups by $K_2$,} Publ. Math.
Inst. Hautes \'{E}tudes Sci. No. 94 (2001), pp. 5--85.




\bibitem{CC1}
C. Contou-Carr\`{e}re,  {\em Jacobienne locale, groupe de bivecteurs de Witt universel, et symbole mod\'{e}r\'{e},}
C. R. Acad. Sci. Paris S\'er. I Math., {\bf 318}:8 (1994), 743--746.

\bibitem{CC2}
C. Contou-Carr\`{e}re,  {\em Jacobienne locale d'une courbe formelle relative,} Rend. Semin. Mat. Univ. Padova, {\bf 130} (2013), 1--106.

\bibitem{D1}
P.~Deligne, {\em
Le d\'{e}terminant de la cohomologie,}   Current trends in arithmetical algebraic geometry (Arcata, Calif., 1985), 93--177,
Contemp. Math., 67, Amer. Math. Soc., Providence, RI, 1987.

\bibitem{D2}
P.~Deligne, {\em Le symbole mod\'{e}r\'{e},}   Inst. Hautes \'{E}tudes Sci. Publ. Math. No. 73 (1991), 147--181.

\bibitem{Die}
J.~Dieudonn\'e, {\em Introduction to the theory of formal groups,} Pure and Applied Mathematics, 20. Marcel Dekker, Inc., New York, 1973.


{
\bibitem{FB}
E. Frenkel, D. Ben-Zvi  {\em  Vertex algebras and algebraic curves,} Second edition. Mathematical Surveys and Monographs, 88. American Mathematical Society, Providence, RI, 2004.
}


\bibitem{GO3} S. O. Gorchinskiy, D. V. Osipov, {\em Higher-dimensional Contou-Carrère symbol and continuous automorphisms,}  Funktsional. Anal. i Prilozhen. 50 (2016), no. 4, 26--42; English translation in Funct. Anal. Appl. 50 (2016), no. 4, 268--280.



\bibitem{EGA-IV-3}
A. Grothendieck, {\it \'El\'ements de g\'eom\'etrie alg\'ebrique. IV. \'Etude locale des sch\'emas et des morphismes de sch\'emas. III,} Inst. Hautes \'Etudes Sci. Publ. Math., {\bf 28} (1966).



\bibitem{EGA4}
A. Grothendieck, {\it \'El\'ements de g\'eom\'etrie alg\'ebrique. IV. \'Etude locale des sch\'emas et des morphismes de sch\'emas IV,} Inst. Hautes \'Etudes Sci. Publ. Math., {\bf 32} (1967).


\bibitem{KP}
 V. G. Kac, D. H.  Peterson,  {\em Spin and wedge representations of infinite-dimensional Lie algebras and groups,} Proc. Nat. Acad. Sci. U.S.A. 78 (1981), no. 6, part 1, 3308--3312.


\bibitem{Kie}
R.~Kiehl, {\em
Ein ``Descente''-Lemma und Grothendiecks Projektionssatz für nichtnoethersche Schemata,} Math. Ann. 198 (1972), 287--316.


\bibitem{Mi}  J. Milnor, {\em  Introduction to algebraic K-theory,} Annals of Mathematics Studies, No. 72. Princeton University Press, Princeton, N.J.; University of Tokyo Press, Tokyo, 1971.

\bibitem{Osip}
D. V. Osipov, {\em
 The Krichever correspondence for algebraic varieties,} Izv. Ross. Akad. Nauk Ser. Mat. 65 (2001), no. 5, 91--128; English translation in Izv. Math. 65 (2001), no. 5, 941--975.



\bibitem{O1}
D. V. Osipov, {\em Formal Bott-Thurston cocycle and part of a formal Riemann-Roch theorem,}  Algebra and Arithmetic, Algebraic, and Complex Geometry, Collected papers. Dedicated to the memory of Academician Aleksei Nikolaevich Parshin, Tr. Mat. Inst. Steklova 320 (2023), 243--277; English translation in  Proc. Steklov Inst. Math. 320 (2023), 226 --257; e-print arXiv: 2211.15932.



\bibitem{O2}
D. V. Osipov, {\em Determinant central extension and   $\cup$-products of $1$-cocycles,}
 Uspekhi Mat. Nauk 78 (2023), no. 4(472), 207--208;
English translation in Russian Math. Surveys, 78:4 (2023).



\bibitem{OZ}
D.~Osipov, X.~Zhu, {\em  The two-dimensional Contou-Carr\`{e}re symbol and reciprocity laws,} J. Algebraic Geom. 25 (2016), no. 4, 703--774.


{
\bibitem{Pol}
A.~Polishchuk, {\em Extended clutching construction for the moduli of stable curves,} e-print arXiv: 2110.04682 [math.AG].
}




\bibitem{PS} A.~Pressley, G.~Segal, {\em
Loop groups,}
Oxford Mathematical Monographs, Oxford Science Publications, The Clarendon Press, Oxford University Press, New York, Revised ed. edition 1988.

\bibitem{R}
M.~Romagny, {\em Group actions on stacks and applications,} Michigan Math. J., 53 (2005), no.~1, 209--236.




\bibitem{SGA3}
 {\it Sch\'emas en groupes. I: Prorpi\'et\'es g\'en\'erales des sch\'emas en groupes,} S\'eminaire de G\'eom\'etrie Alg\'ebrique du Bois Marie 1962/64 (SGA 3). Dirig\'e par M.~Demazure, A.~Grothendieck. Avec la collaboration de M. Artin, J.~E.~Bertin, P.~Gabriel, M.~Raynaud, J.~P.~Serre.   Lecture Notes in Mathematics, Vol.  151. Springer-Verlag, Berlin-New York, 1970.


\bibitem{Se} G.~Segal, {\em Unitary representations of some infinite-dimensional groups,} Comm. Math. Phys. 80 (1981), no.~3, 301--342.


\bibitem{SW} G.~Segal, G.~Wilson, {\em
Loop groups and equations of KdV type,} Inst. Hautes \'Etudes Sci. Publ. Math. (1985), no.~61, 5--65.

\bibitem{Serre} J.-P.~Serre,  {\em Lie algebras and Lie groups,} 1964 lectures given at Harvard University. Corrected fifth printing of the second (1992) edition. Lecture Notes in Mathematics, 1500. Springer-Verlag, Berlin, 2006.

\bibitem{Stacks} The Stacks Project Authors, {\em  Stacks Project,} http://stacks.math.columbia.edu.


\bibitem{SGA6}
{\em Th\'eorie des intersections et cohomologie \'etale des sch\'emas,}  S\'eminaire de G\'eom\'etrie Alg\'ebrique du Bois-Marie 1966-1967 (SGA 6). Dirig\'e par P.~Berthelot, A.~Grothendieck, et L.~Illusie. Avec la collaboration de
D. Ferrand, J. P. Jouanolou, O. Jussila, S. Kleiman, M. Raynaud et J. P. Serre. Lecture Notes in Mathematics, Vol. 225. Springer-Verlag, Berlin-New York, 1971.




\bibitem{SGA4} {\em Th\'eorie des topos et cohomologie \'etale des sch\'emas. Tome 1: Th\'eorie des topos,}  S\'eminaire de G\'eom\'etrie Alg\'ebrique du Bois-Marie 1963-1964 (SGA 4). Dirig\'e par M.~Artin, A.~Grothendieck, et J.~L.~Verdier. Avec la collaboration de N.~Bourbaki,
    P.~Deligne et B.~Saint-Donat. Lecture Notes in Mathematics, Vol. 269. Springer-Verlag, Berlin-New York, 1972.








\end{thebibliography}
\end{document}